\newtheorem{theorem}{Theorem}[section]
\newtheorem{definition}{Definition} [section]
\newtheorem{corollary}{Corollary}[theorem]
\newtheorem{lemma}[theorem]{Lemma}
\newtheorem{proposition}[theorem]{Proposition}
\newtheorem{remark}[theorem]{Remark}
\newcounter{yuppo}
\newcommand{\Keler} {K\"{a}hler }
\newcommand{\keler} {K\"{a}hler}
\newcommand{\End}{\operatorname{End}}
\newcommand{\cds}{\cdots}
\newcommand{\cd}{\cdot}
\renewcommand{\setminus}{-}
\newcommand{\om}{\omega}
\renewcommand{\phi}{\varphi}
\newcommand{\cinf}{C^\infty}
\newcommand{\ra}{\rightarrow}
\newcommand{\lra}{\longrightarrow}
\newcommand{\C}{\mathbb{C}}
\newcommand{\R}{\mathbb{R}}
\newcommand{\Gl}{\operatorname{Gl}}
\newcommand{\restr}[1]          {\vert_{#1}}
\newcommand{\Ad}{\operatorname{Ad}}
\newcommand{\ad}{{\operatorname{ad}}}
\newcommand{\ga}{\gamma}
\newcommand{\meno}{^{-1}}
\newcommand{\enf}{\emph}
\newcommand{\liu}{\mathfrak{u}}
\newcommand{\lia}{\mathfrak{a}}
\newcommand{\liek}{\mathfrak{k}}
\newcommand{\lier}{\mathfrak{r}}
\newcommand{\lieg}{\mathfrak{g}}
\newcommand{\liep}{\mathfrak{p}}
\newcommand{\lieq}{\mathfrak{q}}
\newcommand{\liez}{\mathfrak{z}}
\newcommand{\liem}{\mathfrak{m}}
\newcommand{\lien}{\mathfrak{n}}
\newcommand{\la}{\lambda}
\newcommand{\alfa}{\alpha}
 \newcommand{\vacuo}{\emptyset}
\newcommand{\OO}{\mathcal{O}} 
\newcommand{\ext}{\operatorname{ext}} 
\newcommand{\sx}{\langle} 
\newcommand{\xs}{\rangle}
\newcommand{\relint}{\operatorname{relint}} 
\newcommand{\Crit}{\operatorname{Crit}} 
\newcommand{\roots}{\Delta} 
\newcommand{\simple}{\Pi} 
\newcommand{\spam}{\operatorname{span}}
\newcommand{\CF}{C_F}
\newcommand{\noparty}[1]{}
\newcommand{\changed}[1]{{#1}}
\newcommand{\scalo}{\sx \, , \, \xs}
\newcommand{\metrica}{(\, , \, )}
\newcommand{\mup}{\mu_\liep}
\newcommand{\mupb}{\mu_\liep^\beta}
\title{Properties of Gradient maps associated with Action of Real reductive Group}
\author{Biliotti, L.}
\address{Leonardo Biliotti, Dipartimento di Scienze Matematiche, Fisiche e Informatiche \\
          Universit\`a di Parma (Italy)}
\email{leonardo.biliotti@unipr.it}
\author{Oluwagbenga Joshua Windare}
\address{Oluwagbenga Joshua Windare, Dipartimento di Scienze Matematiche, Fisiche e Informatiche \\
          Universit\`a di Parma (Italy)}
\email{oluwagbengajoshua.windare@unipr.it}
\keywords{Cartan decomposition, Hamiltonian action, Momentum map, Norm square, Two orbit variety}
\thanks{The first author was partially supported by the Project PRIN 2015, ``Real and Complex Manifolds: Geometry, Topology and Harmonic Analysis'',
Project PRIN  2017 ``Real and Complex Manifolds: Topology, Geometry and holomorphic dynamics'' and by GNSAGA INdAM.}
\subjclass[2010]{57S20; 32M05}
\begin{document}
\maketitle
\begin{abstract}
\noindent
Let $(Z,\omega)$ be a \Keler manifold and  let $U$ be a compact connected Lie group with Lie algebra $\mathfrak{u}$ acting on $Z$ and preserving $\omega$. We assume that the $U$-action extends holomorphically to an action of the complexified group $U^\C$ and the $U$-action on $Z$ is Hamiltonian. Then there exists a $U$-equivariant momentum map $\mu : Z\to \mathfrak{u}$.
If $G\subset U^\C$ is a closed subgroup such that the Cartan decomposition $U^\C = U\text{exp}(i\mathfrak{u})$ induces a Cartan decomposition
$G = K\text{exp}(\mathfrak{p}),$ where $K = U\cap G$, $\mathfrak{p} = \mathfrak{g}\cap i\mathfrak{u}$ and $\mathfrak{g}=\mathfrak k \oplus \mathfrak p$ is the Lie algebra of $G$, there is
a corresponding gradient map $\mu_\mathfrak{p} : Z\to \mathfrak{p}$. If $X$ is a $G$-invariant compact and connected real submanifold of $Z,$ we may consider
$\mu_{\mathfrak p}$ as a mapping $\mu_\mathfrak{p} : X\to \mathfrak{p}.$ Given an $\mathrm{Ad}(K)$-invariant scalar product on $\mathfrak p$, we obtain a Morse like function $f=\frac{1}{2}\parallel \mu_{\mathfrak p} \parallel^2$ on $X$. We point out that, without the assumption that $X$ is real analytic manifold, the Lojasiewicz gradient inequality holds for $f$. Therefore the limit of the negative gradient flow of $f$ exists and it is unique. Moreover, we prove that any $G$-orbit collapses to a single $K$-orbit and two critical points of $f$ which are in the same $G$-orbit belong to the same $K$-orbit.  We also investigate convexity properties of the gradient map $\mu_\mathfrak{p}$ in the Abelian  cases. In particular, we study two orbits variety $X$ and we investigate topological and cohomological properties of $X$.
\end{abstract}
\newpage
\renewcommand{\contentsname}{Table of Contents}
\tableofcontents
\section{Introduction}
\pagenumbering{arabic}
In this paper, we study the actions of real reductive Lie groups on real submanifolds of \Keler manifolds.

Let $U$ be a compact connected Lie group with Lie algebra $\mathfrak{u}$ and let $U^\C$ be its complexification. We say that a subgroup $G$ of $U^\C$ is
compatible if $G$ is closed and the map $K\times \mathfrak{p} \to G,$ $(k,\beta) \mapsto k\text{exp}(\beta)$ is a diffeomorpism where $K := G\cap U$
and $\mathfrak{p} := \mathfrak{g}\cap \text{i}\mathfrak{u};$ $\mathfrak{g}$ is the Lie algebra of $G.$ The Lie algebra $\mathfrak{u}^\C$ of $U^\C$ is the direct
sum $\mathfrak{u}\oplus i\mathfrak{u}.$ It follows that $G$ is compatible with the Cartan decomposition $U^\C = U\text{exp}(\text{i}\mathfrak{u})$
(see Section \ref{comp-subgrous}), $K$ is a maximal compact subgroup of $G$ with Lie algebra $\mathfrak{k}$ and that
$\mathfrak{g} = \mathfrak{k}\oplus \mathfrak{p}.$

Let $(Z,\omega)$ be a \Keler manifold with an holomorphic action of the complex reductive group $U^\C$. We also assume $\omega$ is $U$-invariant and that there is a $U$-equivariant moment map $\mu : Z \to \mathfrak{u}^*.$ By definition, for any $\xi \in \mathfrak{u}$ and $z\in Z,$ $d\mu^\xi = i_{\xi_Z}\omega,$ where $\mu^\xi(z) := \mu(z) (\xi)$, and $\xi_Z$ denotes the fundamental vector field induced on $Z$ by the action of $U,$
$$
\xi_Z(z) := \frac{d}{dt}\bigg \vert_{t=0} \text{exp}(t\xi)\cdot z.
$$

The inclusion i$\mathfrak{p}\hookrightarrow \mathfrak{u}$ induces by restriction, a $K$-equivariant map $\mu_{\text{i}\mathfrak{p}} : Z \to (\text{i}\mathfrak{p})^*.$ Using a $\mathrm{Ad}(U)$-invariant inner product on $\mathfrak u^\C$ to  identify $(\text{i}\mathfrak{p})^*$ and $\mathfrak{p},$ so $\mu_{\text{i}\mathfrak{p}}$ can be viewed as a map $\mu_{\mathfrak{p}} : Z \to \mathfrak{p}.$  For $\beta \in \mathfrak{p}$ let $\mu_\mathfrak{p}^\beta$ denote $\mu^{-\text{i}\beta}.$ i.e., $\mu_\mathfrak{p}^\beta(z) = -\sx\mu(z), i\beta\xs.$  Then grad$\mu_\mathfrak{p}^\beta = \beta_Z,$ where grad is computed with respect to the Riemannian metric induced by the \Keler structure. The map $\mu_\mathfrak{p}$ is called the gradient map associated with $\mu$ (see Section \ref{subsection-gradient-moment}). For a $G$-stable locally closed real submanifold $X$ of $Z,$ we consider $\mu_\mathfrak{p}$ as a mapping $\mu_\mathfrak{p} : X\to \mathfrak{p}.$ Using the inner product  on $\mathfrak p\subset i\mathfrak u$, we define the norm square of $\mu_\mathfrak{p}$ by
$$f(x) := \frac{1}{2}\parallel \mu_\mathfrak{p}(x) \parallel^2; \quad x \in X.$$
The set of semistable points associated with the critical points of the norm square of $\mu_\mathfrak{p}$ was studied in great details in \cite{heinzner-schwarz-stoetzel}. The norm square of $\mu_\mathfrak{p}$ is in general far from being Morse-Bott and it's critical sets may be very complicated. But can we have a particular case when the norm square will be Morse-Bott? We find this question to be true for two orbits variety. From now on, we always assume that $X$ is connected and compact.

Suppose that the action of $G$ on $X$ has two orbits. $X$ is called a two orbit variety. S. Cupit-Foutou obtained the classification of a complex algebraic varieties on which a reductive complex algebraic group acts with two orbits \cite{Cupit-Foutou}. Applying standard
Morse-theoretic results in \cite{Kirwan} and \cite{heinzner-schwarz-stoetzel}, we prove that the norm square is Morse-Bott and obtain information on the
cohomology and $K$-equivariant cohomology of $X$ (Theorem \ref{two-orbit-Morse}), generalizing \cite{Anna}.

A central ingredient to prove this result is the Ness Uniqueness Theorem which asserts that any two critical points of $f$ in the same $G$-orbit in fact belong to the same $K$-orbit (Theorem \ref{critt}). Moreover,  although we do not assume that $X$ is real analytic manifold, we point out that for any $G$-invariant compact and connected submanifold of $Z$, the Lojasiewicz gradient inequality holds for the norm square. Therefore, the limit of the negative gradient flow exists and it is unique and any $G$-orbit collapses to a single $K$-orbit (Theorem \ref{corr}).
 We use the original ideas from \cite{Salamon} in a different context. By the stratification theorem, we have
$$
\{p\in X : \overline{G \cdot p}\cap \mu_\mathfrak{p}^{-1}(0) \neq \emptyset \} = \{p\in X : \lim_{t\to +\infty}\phi_t(p)\in \mu_\mathfrak{p}^{-1}(0)\} = S_G(\mu_\mathfrak{p}^{-1}(0)),
$$
where $\phi_t(p)$ is the flow of the vector field -grad$f.$ Then, there exist a $K$-equivariant strong deformation of $S_G(\mu_\mathfrak{p}^{-1}(0))$ onto the set $\mu_\mathfrak{p}^{-1}(0)$ (Theorem \ref{retraction-theorem}). Hence no analyticity assumption is necessary in the statement of the retraction Theorem answering Question $1$ in \cite[$p.219$]{heinzner-stoetzel}.

Biliotti and Ghigi \cite{LG} proved a convexity theorem along orbits in a very general setting using only so-called Kempf-Ness function. The behaviour of the corresponding gradient map is encoded in the Kempf-Ness function. Recently, Biliotti \cite{LB} gives a new proof of the Hilbert-Mumford criterion for real reductive Lie groups stressing the properties of the Kempf-Ness functions. He shows that the Kempf-Ness function is Morse-Bott and it is convex along geodesics for the action of a linear group on $\mathbb{P}(V)$ where $V$ is a finite dimensional dimensional vector space. We prove this result in a general setting.

Results on convexity theorems are obtained. Let $\mathfrak{a}\subset\mathfrak{p}$ be an Abelian subalgebra. Let $\mu_\mathfrak{a} : X\to \mathfrak{a}$ denote the gradient map of $A = \text{exp}(\mathfrak{a}).$ If $\pi_\mathfrak{a} : \mathfrak{p} \to \mathfrak{a}$ is the orthogonal projection, then $\mu_\mathfrak{a} = \pi_\mathfrak{a} \circ \mu_\mathfrak{p}.$ Although there is no counterexample, we do not know if the Abelian Convexity Theorem holds for any $G$-invariant connected  submanifold (see for instance \cite{biliotti-ghigi-heinzner-document,LG,heinzner-schuetzdeller} for more details on the subject).
If $G$ has a unique closed orbit $\OO,$ then we prove that $\mu_\mathfrak{a}(X) = \mu_\mathfrak{a}(\OO)$ and so a polytope. This result is new also if $G = U^\C$ and $X = Z.$ This means that $\OO$ captures all the information of the $A$-gradient map. As an application, we prove that the Abelian convexity Theorem holds for a two orbits variety.

If $Z$ is connected and compact and $X\subset Z$ is a $A$-stable compact, connected coisotropic submanifold of $Z$, then we prove $\mu_\mathfrak{a}(X) = \mu_\mathfrak{a}(Z)$ (Theorem \ref{convexity-coisotropic}) and so it is a polytope as well. More precisely, there exists an open and dense subset $W$ of $X$  such that for any $p\in W,$ we have
$\mu_\mathfrak{a}(X) = \mu_\mathfrak{a}(\overline{A\cdot p})$. 

\section{Preliminaries}

 \subsection{Convex geometry}
\label{pre-convex}

 In this section, some definitions and results in convex geometry are recalled. The reader can see e.g. \cite{schneider-convex-bodies} and \cite{biliotti-ghigi-heinzner-1-preprint} for further details on the topic. 
 
 Let $V$ be a real vector space with a scalar product $\scalo$ and let $E\subset V$ be a \changed{compact} convex subset. The \emph{relative interior} of $E$, denoted by $\relint E$, is the interior of $E$ in its affine hull. For $a,b\in E,$ denote the closed segment joining $a$ and $b$ by $[a,b]$. Then, a face of $E$ is a convex subset $F$ of $E$ such that if $a,b\in E$ and $\relint[a,b]\cap F\neq \vacuo$, then
 $[a,b]\subset F$.  The \emph{extreme points} of $E$ denoted by $\ext E$ are the points
 $a\in E$ such that $\{a\}$ is a face. By a Theorem of Minkowski, $E$ is the convex hull of its extreme points \cite[p.19]{schneider-convex-bodies}. The faces of $E$
 are closed \cite[p. 62]{schneider-convex-bodies}. The empty set and $E$ are faces of $E:$ the other faces are called \enf{proper}.
 \begin{definition}
  The support function of $E$ is defined by the function $ h_E : V \ra \R$, $
 h_E(u) = \max \{\sx x, u \xs : x\in E\}$.  If $ u\neq 0$, the
 hyperplane $H(E, u) : = \{ x\in E : \sx x, u \xs = h_E(u)\}$ is
 called the supporting hyperplane of $E$ for $u$.

 The set
   \begin{gather}
     \label{def-exposed}
     F_u (E) : = E \cap H(E,u)
   \end{gather}
   is a face and it is called the \enf{exposed face} of $E$ defined by
   $u$.
 \end{definition}

 Intuitively, the meaning of the support function is simple. For instance, consider a nonempty closed convex set $E\subset \mathbb{R}^n.$ Then for a unit vector $u\in S^{n-1}\cap \text{dom}h_E,$ the supporting function $h_E(u)$ is the signed distance of the support plane to $E$ with exterior normal vector $u$ from the origin; the distance is negative if and only if $u$ points into the open half-space containing the origin.  In general not all faces of a convex subset are exposed. For instance, consider the convex hull of a closed disc and a point outside the disc: the resulting convex set is the union of the
 disc and a triangle. The two vertices of the triangle that lie on the
 boundary of the disc are non-exposed 0-faces.

 A subset $E\subset V$ is called a \emph{convex cone} if $E$ is convex, non empty and closed under multiplication by non negative real numbers.
The following results about a compact convex set $E$ and it's faces are recalled from \cite{biliotti-ghigi-heinzner-1-preprint}
 \begin{lemma}
   \label{u-cono}
   If $F \subset E$ is an exposed face, the set $\CF : = \{ u\in V:
   F=F_u(E) \}$ is a convex cone. If $G$ is a compact subgroup of
   $O(V)$ that preserves both $E$ and $F$, then $\CF$ contains a fixed
   point of $G$.
 \end{lemma}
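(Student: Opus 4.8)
The plan is to establish the two assertions separately. For the first, that $\CF = \{u \in V : F = F_u(E)\}$ is a convex cone, I would argue directly from the definition of the exposed face $F_u(E) = E \cap H(E,u)$. First I would observe that $\CF$ is closed under multiplication by positive scalars: if $t > 0$ then $h_E(tu) = t\, h_E(u)$ and $\sx x, tu \xs = t \sx x, u \xs$, so the supporting hyperplane $H(E,tu)$ coincides with $H(E,u)$, whence $F_{tu}(E) = F_u(E)$. (Here one must decide whether $0 \in \CF$; it lies in $\CF$ precisely when $F = E$, which is fine since the empty set and $E$ itself are faces, and a convex cone is permitted to contain $0$.) The substantive point is convexity: suppose $u_1, u_2 \in \CF$, so that $F = F_{u_1}(E) = F_{u_2}(E)$. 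For $x \in F$ we have $\sx x, u_i \xs = h_E(u_i)$ for $i=1,2$, hence $\sx x, u_1 + u_2 \xs = h_E(u_1) + h_E(u_2) \geq h_E(u_1 + u_2)$, and combined with the reverse inequality $h_E(u_1+u_2) \geq \sx x, u_1 + u_2 \xs$ we get $\sx x, u_1+u_2\xs = h_E(u_1+u_2)$, so $F \subseteq F_{u_1+u_2}(E)$. Conversely, if $y \in F_{u_1+u_2}(E)$ then $\sx y, u_1 \xs + \sx y, u_2 \xs = h_E(u_1) + h_E(u_2)$, but $\sx y, u_i \xs \leq h_E(u_i)$ for each $i$ forces equality in both, so $y \in H(E,u_1) \cap H(E,u_2)$, giving $y \in F_{u_1}(E) = F$. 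Thus $F_{u_1+u_2}(E) = F$ and $u_1 + u_2 \in \CF$; together with positive homogeneity this yields convexity. I should also remark that $\CF$ is nonempty by hypothesis (it is an exposed face, so some $u$ defines it).

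For the second assertion, suppose $G$ is a compact subgroup of $O(V)$ preserving both $E$ and $F$. The key step is that $G$ acts on $\CF$: for $g \in G$ and $u \in \CF$, since $g$ is an isometry preserving $E$ one checks $h_E(gu) = h_E(u)$ and $g\bigl(H(E,u)\bigr) = H(E,gu)$, hence $F_{gu}(E) = g\bigl(F_u(E)\bigr) = g(F) = F$, so $gu \in \CF$. Now pick any $u_0 \in \CF$ and average over the group using normalized Haar measure $dg$ on $G$, setting $\bar u := \int_G g u_0 \, dg$. This average is $G$-fixed by invariance of Haar measure, and it lies in $\CF$ because $\CF$ is convex (by the first part) and closed — closedness follows since $F$ is closed and the condition $F = F_u(E)$ can be written via the continuous functions $h_E$ and evaluation pairings, or more simply $\CF$ is the intersection of the closed cone $\{u : \sx x, u \xs = h_E(u) \text{ for all } x \in F\}$ with the closed set $\{u : F_u(E) \subseteq F\}$; a finite-dimensional convex cone that is an increasing-scalar-closed convex set and closed contains its Haar averages. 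Hence $\bar u$ is the desired $G$-fixed point of $\CF$.

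The main obstacle I anticipate is the closedness of $\CF$, which is needed to guarantee that the Haar average stays inside it. The inclusion $F \subseteq F_u(E)$ is a closed condition (it is $\bigcap_{x \in F}\{u : \sx x,u\xs \geq h_E(u)\}$, and since $\sx x, u\xs \leq h_E(u)$ always holds this is $\bigcap_{x\in F}\{u : \sx x, u \xs = h_E(u)\}$, an intersection of closed sets using continuity of $h_E$). The reverse inclusion $F_u(E) \subseteq F$ is less obviously closed, so I would instead argue that on the (nonempty, convex) set $\overline{\CF}$ the map $u \mapsto F_u(E)$ is constant: if $u_n \to u$ with $F_{u_n}(E) = F$, then any limit point of the exposed faces contains $F$, and one uses that an exposed face cannot strictly enlarge in the limit while still meeting the same supporting data — alternatively, take $u_0$ in the relative interior of $\CF$ (which is nonempty since $\CF$ is a nonempty convex set) so that $\CF$ is already "large enough" and average just over the orbit of that relative-interior point, noting the relative interior of a $G$-invariant convex cone is itself $G$-invariant. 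This is a routine but slightly delicate point in convex geometry, and it is exactly where care is required; the rest is bookkeeping with support functions.
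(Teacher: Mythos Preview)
The paper does not actually prove this lemma: it is stated among ``results \ldots recalled from \cite{biliotti-ghigi-heinzner-1-preprint}'' and no argument is given. So there is nothing to compare against; one can only assess your argument on its own merits.

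Your proof of the first assertion is correct and is the natural one: positive homogeneity of $h_E$ gives stability of $\CF$ under positive scalars, and subadditivity of $h_E$ together with the two inequalities $\sx y,u_i\xs\le h_E(u_i)$ forces equality and yields closure under sums.

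For the second assertion your strategy --- show $G$ preserves $\CF$ and then average with Haar measure --- is the standard one and works. The only genuine flaw is the justification you give for $\bar u\in\CF$. You invoke closedness of $\CF$, then spend a paragraph trying to establish it. In fact $\CF$ is \emph{not} closed in general: take $E$ the square with vertices $(\pm1,\pm1)$ and $F=\{(1,1)\}$; then $\CF$ is the open positive quadrant, since $(1,1/n)\in\CF$ but the limit $(1,0)$ exposes the whole right edge. So that line of argument cannot be completed.

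Fortunately closedness is irrelevant. Since $G$ is compact, the orbit $G\cdot u_0$ is compact and contained in $\CF$; by convexity $\conv(G\cdot u_0)\subset\CF$; and in a finite-dimensional space the convex hull of a compact set is already compact (Carath\'eodory), hence closed. The Haar average $\bar u=\int_G g u_0\,dg$ lies in the closed convex hull of $G\cdot u_0$, which equals $\conv(G\cdot u_0)\subset\CF$. That one sentence replaces your entire final paragraph. With this correction your proof is complete.
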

\begin{theorem} [\protect{\cite[p. 62]{schneider-convex-bodies}}]
   \label{schneider-facce} If $E$ is a compact convex set and
   $F_1,F_2$ are distinct faces of $E$, then $\relint F_1 \cap \relint
   F_2=\vacuo$. If $G$ is a nonempty convex subset of $ E$ which is
   open in its affine hull, then $G \subset\relint F$ for some face
   $F$ of $E$. Therefore $E$ is the disjoint union of \changed{the
     relative interiors of its} faces.
 \end{theorem}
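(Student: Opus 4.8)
The plan is to prove the three assertions in turn; the middle existence statement is the substantive one, the other two being short corollaries of it together with the definition of a face.

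For disjointness, suppose $x\in\relint F_1\cap\relint F_2$ for distinct faces $F_1,F_2$ of $E$. Given any $y\in F_1$, since $x\in\relint F_1$ one may prolong the segment $[y,x]$ a little beyond $x$ inside $F_1$, obtaining $z\in F_1$ with $x\in\relint[y,z]$. Then $y,z\in E$ and $\relint[y,z]\cap F_2\neq\emptyset$, so the definition of a face forces $[y,z]\subset F_2$ and in particular $y\in F_2$. Hence $F_1\subset F_2$, and by symmetry $F_1=F_2$, a contradiction; thus the relative interiors of distinct faces are disjoint.

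Next I would show that every $x\in E$ lies in the relative interior of some face. First record two facts, each immediate from the definition by chasing segments: (i) an arbitrary intersection of faces of $E$ is a face of $E$; (ii) a face of a face of $E$ is a face of $E$. By (i) the intersection $F_x$ of all faces of $E$ containing $x$ is the smallest face containing $x$. I claim $x\in\relint F_x$. If not, then $x$ lies in the relative boundary of $F_x$, which is a nonempty closed convex set (faces of $E$ being closed, as recalled); applying the supporting hyperplane theorem inside $\aff F_x$ yields $u\in V$ with $x\in F_u(F_x)$ and $F_u(F_x)\subsetneq F_x$. By (ii), $F_u(F_x)$ is a face of $E$ containing $x$ and strictly smaller than $F_x$, contradicting minimality. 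Hence $x\in\relint F_x$, and this face is unique by the disjointness just proved.

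Finally, for the decomposition let $N\subset E$ be nonempty, convex and open in its affine hull. Pick $x\in N$; for any $y\in N$, prolonging $[y,x]$ beyond $x$ inside $N$ gives $z\in N$ with $x\in\relint[y,z]$, and since $x\in F_x$ this forces $[y,z]\subset F_x$, so $y\in F_x$; thus $N\subset F_x$. By symmetry $N\subset F_y$, and minimality gives $F_x=F_y=:F$, whence $y\in\relint F_y=\relint F$. As $y$ was arbitrary, $N\subset\relint F$. Taking $N=\{x\}$ for each $x\in E$ shows $E=\bigcup_F\relint F$ over all faces $F$, and this union is disjoint by the first step. The only genuine obstacle in the whole argument is the claim $x\in\relint F_x$: producing a proper face of $F_x$ through a relative boundary point is exactly where compactness of $E$ — ensuring faces are closed — is used, via the supporting hyperplane theorem in $\aff F_x$; all the rest is formal manipulation of the face axiom.
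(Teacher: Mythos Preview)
Your argument is correct. The paper does not actually prove this theorem: it is stated with a citation to Schneider's \emph{Convex Bodies} and no proof is given, since the result is a standard piece of convex geometry used only as background. What you have written is essentially the classical proof one finds in that reference --- construct the minimal face $F_x$ through a point, show $x\in\relint F_x$ via a supporting hyperplane in $\aff F_x$, and deduce both disjointness and the decomposition from this. Your remark that compactness enters precisely through the closedness of faces (needed for the supporting-hyperplane step) is accurate, and your renaming of the subset to $N$ to avoid clashing with the paper's Lie group $G$ is sensible.
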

The next result is a possibly well known but useful fact.
\begin{proposition}\label{convex-criterium}
Let $C_1 \subseteq C_2$ be two compact convex set of $V$. Assume that for any $\beta \in V$ we have
\[
\mathrm{max}_{y\in C_1} \langle y , \beta \rangle=\mathrm{max}_{y\in C_2} \langle y , \beta \rangle.
\]
Then $C_1=C_2$.
\end{proposition}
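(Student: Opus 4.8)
The plan is to argue by contradiction using the strong (Hahn--Banach) separation theorem, exploiting that a compact convex set is closed. Suppose $C_1\neq C_2$. Since $C_1\subseteq C_2$, there exists a point $x_0\in C_2\setminus C_1$. Because $C_1$ is a nonempty compact, hence closed, convex subset of the finite-dimensional inner product space $V$ and $x_0\notin C_1$, the strict separation theorem provides a vector $\beta\in V$ and a scalar $c\in\R$ with
\[
\langle y,\beta\rangle \le c < \langle x_0,\beta\rangle \qquad \text{for all } y\in C_1 .
\]
Taking the maximum over $y\in C_1$ on the left and noting $x_0\in C_2$ on the right gives
\[
\max_{y\in C_1}\langle y,\beta\rangle \;\le\; c \;<\; \langle x_0,\beta\rangle \;\le\; \max_{y\in C_2}\langle y,\beta\rangle,
\]
which contradicts the hypothesis that the two maxima coincide for every $\beta\in V$. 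Hence $C_1=C_2$.

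Equivalently, one may phrase the argument through support functions: the hypothesis is precisely $h_{C_1}\equiv h_{C_2}$ on $V$, and any compact convex set $E$ is the intersection of the closed half-spaces $\{x\in V:\langle x,u\rangle\le h_E(u)\}$ over $u\in V$, so that $E$ is recovered from $h_E$; equality of the support functions therefore forces $C_1=C_2$. This reformulation is in fact what the first displayed chain of inequalities proves.

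There is no substantial obstacle here; the only point requiring a little care is that the separation must be \emph{strict}, which is exactly why closedness of $C_1$ (guaranteed by compactness) is used, and that it suffices to test the support functions on all of $V$ rather than only on a dense set or on the unit sphere. I would include the short contradiction argument above as the proof, possibly with a parenthetical remark that it only uses the existence of a separating continuous functional and so remains valid in any locally convex topological vector space.
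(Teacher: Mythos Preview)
Your proof is correct and follows essentially the same idea as the paper's: both argue by contradiction and produce a direction $\beta$ along which the two support functions differ. The only cosmetic difference is that the paper picks a boundary point of $C_1$ lying in the interior of $C_2$ and invokes exposed faces to obtain $\beta$, whereas you pick any $x_0\in C_2\setminus C_1$ and apply strict separation directly; your version is slightly cleaner since it avoids the affine-hull reduction and the boundary/interior discussion.
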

\begin{proof}
We may assume without loss of generality  that the affine hull of $C_2$ is $V$.
Assume by contradiction that $C_1 \subsetneq C_2$. Since $C_1$ and $C_2$ are both compact, it follows that there exists $p\in \partial C_1$ such that $p\in \stackrel{o}{C_2}$. Since every face of a convex compact set is contained in an exposed face \cite{schneider-convex-bodies}, there exists $\beta \in V$ such that
\[
\mathrm{max}_{y\in C_1} \langle y , \beta \rangle=\langle p, \beta \rangle.
\]
This means the linear function $x\mapsto \langle x, \beta \rangle$ restricted on $C_2$ achieves its maximum at an interior point which is a contradiction.
\end{proof}

 \subsection{Compatible subgroups}
\label{comp-subgrous}
In this section, the notion of compatible subgroup is discussed. 
Let $H$ be a Lie group with Lie algebra $\mathfrak{h}$ and $E, F \subset \mathfrak{h}$. Then, we set
 \begin{gather*}
   E^F :=     \{\eta\in E: [\eta, \xi ] =0, \forall \xi \in F\} \\
   H^F = \{g\in H: \Ad (g) (\xi ) = \xi, \forall \xi \in F\}.
 \end{gather*}
 If $F=\{\beta\}$ we write simply $E^\beta$ and $H^\beta$.
 
 Let $U$ be a
 compact Lie group and let $U^\C$ be its universal complexification in the sense of \cite{ho}.  The group $U^\C$
 is a reductive complex algebraic group with lie algebra $\mathfrak{u}^\C$ \cite{chevalley}. $\mathfrak{u}^\C$ have the Cartan decomposition
 $$
 \mathfrak{u}^\C = \mathfrak{u} + i\mathfrak{u}
 $$ with a conjugation map $\theta : \liu^\C \ra \liu^\C$ and the corresponding group isomorphism $\theta : U^\C \ra
   U^\C$.  Let $f: U \times i\liu \ra U^\C$ be the diffeomorphism
 $f(g, \xi) = g \text{exp}  (\xi)$. The decomposition $U^\C = U\exp(i\mathfrak{u})$ is referred to as the Cartan decomposition.

 Let $G\subset U^\C$ be a closed real
 subgroup of $U^\C$ We say
 that $G$ is \enf{compatible} with the Cartan decomposition of $U^\C$ if $f (K \times \liep) = G$ where $K:=G\cap U$ and $\liep:= \lieg \cap i\liu$. The
 restriction of $f$ to $K\times \liep$ is then a diffeomorphism onto
 $G$. It follows that $K$ is a maximal compact subgroup of $G$ and
 that $\lieg = \liek \oplus \liep$.  Note that $G$ has finitely many
 connected components. Since $U$ can be embedded in $\Gl(N,\C)$ for
 some $N$, and any such embedding induces a closed embedding of
 $U^\C$, any compatible subgroup is a closed linear group. By \cite[Proposition 1.59
p.57]{knapp-beyond}, $\mathfrak g$ is a real reductive Lie algebra, and so $\mathfrak g=\mathfrak{z(g)}\oplus z[\mathfrak g, \mathfrak g]$, where $\mathfrak{z(g)} := \{v\in \mathfrak g:\, [v,\mathfrak g]=0\}$
is the Lie algebra of the center of G. Denote by $G_{ss}$ the analytic
 subgroup tangent to $[\lieg, \lieg]$ and by $Z(G)$ the center of $G$. Then $G_{ss}$ is closed and
 $G^o=Z(G)^o\cd G_{ss}$ \cite[p. 442]{knapp-beyond}, where $G^o$, respectively $Z(G)^o$, denotes the connected component of the identity of $G$, respectively of $Z(G)$.
\begin{lemma}[\protect{\cite[Lemma 7]{LA}}]
\label{lemcomp}$\,$
   \begin{enumerate}
   \item \label {lemcomp1} If $G\subset U^\C$ is a compatible
     subgroup, and $H\subset G$ is closed and $\theta$-invariant,
     then $H$ is compatible if and only if $H$ has only finitely many connected components.
   \item \label {lemcomp2} If $G\subset U^\C$ is a connected
     compatible subgroup, then $G_{ss}$ is compatible.
     \item \label{lemcomp3} If $G\subset U^\C$ is a compatible
       subgroup, and $E\subset \liep$ is any subset, then $G^E$ is
       compatible. Indeed, $G^E=K^E\cap \exp(\liep^E)$, where $K^E=K\cap G^E$ and $\liep^E=\{v\in \liep:\, [v,E]=0\}$.
   \end{enumerate}
 \end{lemma}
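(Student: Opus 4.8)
I treat the three parts of Lemma~\ref{lemcomp} in turn, using the global diffeomorphism $f\colon U\times i\liu\to U^\C$, $f(u,\xi)=u\exp\xi$, and the Cartan involution $\theta$, which fixes $U$ pointwise and acts as $-\mathrm{id}$ on $i\liu$. For (a), one implication is routine: if $H$ is compatible then $f$ restricts to a diffeomorphism $K_H\times\liep_H\to H$ with $K_H=H\cap U$ and $\liep_H=\lieh\cap\liep$, so $H$ is homotopy equivalent to the compact group $K_H$ and $\pi_0(H)$ is finite. For the converse, let $H$ be closed, $\theta$-invariant, with $\pi_0(H)$ finite. Then $\theta(\lieh)=\lieh$, hence $\lieh=\liek_H\oplus\liep_H$ with $\liek_H=\lieh\cap\liu$, $\liep_H=\lieh\cap\liep$, and $K_H:=H\cap U$ is compact with Lie algebra $\liek_H$. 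Since $f|_{K_H\times\liep_H}$ is automatically injective, it suffices to show it is onto $H$. Given $g=k\exp\beta\in H$ (Cartan decomposition in $G$), $\theta$-invariance gives $\theta(g)=k\exp(-\beta)\in H$, so $\exp(2\beta)=\theta(g)^{-1}g\in H$; if this forces $\beta\in\lieh$, then $\beta\in\liep_H$ and $k=g\exp(-\beta)\in H\cap K=K_H$, so $g\in f(K_H\times\liep_H)$. To prove $\exp(2\beta)\in H\Rightarrow\beta\in\lieh$, I first reduce to the connected case: the identity component $H^o$ is open (hence closed) in $H$, the class of $\exp(2\beta)$ in the finite group $\pi_0(H)$ has finite order $d$, so $\exp(2d\beta)\in H^o$; since $2d\beta\in\lieh\iff\beta\in\lieh$, it is enough to know that $H^o$ is compatible, for then reading $\exp(2d\beta)=e\cdot\exp(2d\beta)$ as the Cartan decomposition of an element of $H^o$ forces $2d\beta\in\lieh\cap\liep=\liep_H$. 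This reduction is exactly where the hypothesis on $\pi_0(H)$ is used, and it is indispensable: for a suitable $\beta\in i\liu$, the subgroup $\{\exp(n\beta):n\in\Zeta\}\subset U^\C$ is closed and $\theta$-invariant but not compatible.

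So it remains to show that a connected, closed, $\theta$-invariant subgroup $L\subseteq G$ is compatible. Here I would argue inside the symmetric space $M=G/K$, which is diffeomorphic to $\liep$ via $\beta\mapsto\exp(\beta)K$ (by the Cartan decomposition $G=K\exp\liep$) and carries an isometric $G$-action; let $o=eK$, and put $\liek_L=\liel\cap\liek$, $\liep_L=\liel\cap\liep$, so $\liel=\liek_L\oplus\liep_L$ and $T_o(L\cdot o)=\liep_L$. The geodesics of $M$ through $o$ tangent to $\liep_L$ are $t\mapsto\exp(tv)\cdot o$ with $v\in\liep_L\subseteq\liel$, hence lie in $L\cdot o$; by homogeneity $L\cdot o$ is totally geodesic, and since these geodesics are complete, $L\cdot o$ is complete and therefore equals $\{\exp(v)\cdot o:v\in\liep_L\}=\exp(\liep_L)\cdot o$. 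As $\mathrm{Stab}_L(o)=L\cap K=K_L$, this gives $L/K_L\cong\exp(\liep_L)\cdot o$, so $K_L\exp(\liep_L)\subseteq L$ surjects onto $L/K_L$; being stable under right translation by $K_L$ (because $\Ad(K_L)$ preserves $\liep_L$), it is a union of fibres of $L\to L/K_L$, and surjecting onto $L/K_L$ it must equal $L$. Thus $L=K_L\exp(\liep_L)$ is compatible, which completes (a).

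Part (b) follows at once: $G_{ss}$ is closed (as recalled before the lemma) and connected, and it is $\theta$-invariant because $\theta$ is a Lie-algebra automorphism, so $\theta[\lieg,\lieg]=[\lieg,\lieg]$ and analytic subgroups are determined by their Lie algebras; now apply (a). For (c), $G^E$ is closed, and it is $\theta$-invariant since for $\xi\in E\subseteq\liep$ one has $\theta(\xi)=-\xi$, whence $\Ad(\theta g)\xi=-\theta(\Ad(g)\xi)=\xi$ whenever $g\in G^E$; the same computation shows $\lieg^E=\Lie(G^E)=\{v\in\lieg:[v,E]=0\}$ is $\theta$-invariant, so $\lieg^E=\liek^E\oplus\liep^E$ with $\liep^E=\{v\in\liep:[v,E]=0\}$ and $\liek^E=\liek\cap\lieg^E$. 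Rather than invoking (a) one can finish directly: for $\beta\in\liep$ the operator $\ad\beta$ is symmetric for a suitable positive definite inner product on $\lieg$, so $\Ad(\exp\beta)$ is symmetric positive definite with set of fixed vectors exactly $\ker(\ad\beta)$, and hence $\Ad(\exp\beta)|_A=\mathrm{id}\iff[\beta,A]=0$ for any subset $A$. Given $g=k\exp\beta\in G^E$, $\theta$-invariance of $G^E$ yields $\exp(2\beta)\in G^E$, i.e.\ $\Ad(\exp2\beta)|_E=\mathrm{id}$, so $[\beta,E]=0$, i.e.\ $\beta\in\liep^E$; then $k=g\exp(-\beta)\in G^E\cap K=K^E$. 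Therefore $G^E=K^E\exp(\liep^E)$ (the reverse inclusion being clear from $\exp(\liep^E)\subseteq G^E$), and restricting $f$ identifies $K^E\times\liep^E$ with $G^E$, so $G^E$ is compatible.

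The only substantive point is the claim used in (a), that $\exp(2\beta)\in H$ with $\beta\in\liep$ forces $\beta\in\lieh$: it relies on the finiteness of $\pi_0(H)$ to reduce to the connected case, and there on the fact that orbits of $\theta$-invariant subgroups in $G/K$ are totally geodesic. Everything else — part (b) and the explicit formulas in (c) — is bookkeeping with the Cartan decomposition of $G$.
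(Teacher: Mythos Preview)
The paper does not prove this lemma: it is stated with a citation to \cite{LA} and no argument is given in the text, so there is no in-paper proof to compare against.

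Your proof is correct. A few remarks. In part (a), the key step in the connected case---that the orbit $L\cdot o\subset G/K$ of a $\theta$-stable connected closed subgroup is totally geodesic---is argued correctly: geodesics through $o$ tangent to $\liep_L$ are $t\mapsto\exp(tv)\cdot o$ with $v\in\liep_L\subset\liel$, hence lie in $L\cdot o$, and transitivity of $L$ on $L\cdot o$ by isometries carries this to every point. (One can also phrase this via the geodesic symmetry $s_o$, which is induced by $\theta$ and therefore preserves $L\cdot o$; homogeneity then gives invariance under all symmetries at points of $L\cdot o$.) Your reduction from $H$ to $H^o$ using finiteness of $\pi_0(H)$ is exactly the place where that hypothesis is needed, and your counterexample $\{\exp(n\beta):n\in\Zeta\}$ shows it cannot be dropped.

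Your direct argument for (c) is a good choice: invoking (a) would require first checking that $G^E$ has finitely many connected components, which is true but is an extra step. Using that $\ad\beta$ is symmetric for $\beta\in\liep$ (hence $\Ad(\exp 2\beta)$ is positive definite symmetric with $1$-eigenspace $\ker\ad\beta$) gives $[\beta,E]=0$ immediately from $\exp(2\beta)\in G^E$, and the decomposition $G^E=K^E\exp(\liep^E)$ follows. Note, incidentally, that the displayed formula in the paper's statement has a typo: it should read $G^E=K^E\exp(\liep^E)$, not $K^E\cap\exp(\liep^E)$; your conclusion is the correct one.
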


\subsection{Parabolic subgroups}
Let $G \subset U^\C$ be a compatible, and let $\lieg = \liek \oplus \liep$ be the corresponding decomposition.  A subalgebra $\lieq \subset \lieg$ is \enf{parabolic} if
$\lieq^\C$ is a parabolic subalgebra of $\lieg^\C$.  One way to
describe the parabolic subalgebras of $\lieg$ is by means of
restricted roots.  If $\lia \subset \liep$ is a maximal subalgebra,
let $\roots(\lieg, \lia)$ be the (restricted) roots of $\lieg$ with
respect to $\lia$, let $\lieg_\la$ denote the root space corresponding
to $\la$ and let $\lieg_0 = \liem \oplus \lia$, where $\liem =
\liez_\liek(\lia)$.  Let $\simple \subset \roots(\lieg, \lia)$ be a
base and let $\roots_+$ be the set of positive roots. If $I\subset
\simple$ set $\roots_I : = \spam(I) \cap \roots$. Then
\begin{gather}
  \label{para-dec}
  \lieq_I:= \lieg_0 \oplus \bigoplus_{\la \in \roots_I \cup \roots_+}
  \lieg_\la
\end{gather}
is a parabolic subalgebra. Conversely, if $\lieq \subset \lieg$ is a
parabolic subalgebra, then there are a maximal subalgebra $\lia
\subset \liep$ contained in $\lieq$, a base $\simple \subset
\roots(\lieg, \lia)$ and a subset $I\subset \simple $ such that $\lieq
= \lieq_I$.  We can further introduce
\begin{gather}
\label{notaz-I}
\begin{gathered}
  \lia_I : = \bigcap_{\la \in I} \ker \la \qquad \lia^I := \lia_I^\perp \\
  \lien_I = \bigoplus_{\la \in \roots_+ \setminus \roots_I} \lieg_\la
  \qquad \liem_I : = \liem \oplus \lia^I \oplus \bigoplus_{\la \in
    \roots_I}\lieg_\la.
\end{gathered}
\end{gather}
Then $\lieq_I = \liem_I \oplus \lia_I \oplus \lien_I$. Since
$\theta\lieg _ \la = \lieg_{-\la}$, it follows that $ \lieq_I \cap
\theta\lieq_I = \lia_I \oplus \liem_I$.  This latter \changed{Lie algebra} coincides
with the centralizer of $\lia_I$ in $\lieg$. It is a \enf{Levi factor}
of $\lieq_I$ and
\begin{gather}
  \label{liaI}
  \lia_I =\liez (\lieq_I \cap \theta\lieq_I) \cap \liep.
\end{gather}
Another way to describe parabolic subalgebras of $\lieg$ is the
following.  If $\beta \in \liep$, the endomorphism $\ad (\beta) \in \End
\lieg$ is diagonalizable over $\R$. Denote by $ V_\la (\ad (\beta)) $ the
eigenspace of $\ad (\beta)$ corresponding to the eigenvalue $\la$.  Set
\begin{gather*}
  \lieg^{\beta+}: = \bigoplus_{\la \geq 0} V_\la (\ad (\beta)).
\end{gather*}
\begin{lemma}
  \label{para-beta}\cite[Lemma 8]{LA}
  For any $\beta$ in $ \liep$, $\lieg^{\beta+}$ is a parabolic
  subalgebra of $\lieg$.  If $\lieq \subset \lieg$ is a parabolic
  subalgebra, there is some vector $\beta \in \liep$ such that $\lieq
  = \lieg^{\beta+}$.  The set of all such vectors is an open convex
  cone in
$\liez(\lieq \cap \theta\lieq) \cap \liep$.
\end{lemma}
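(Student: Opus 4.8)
The plan is to prove all three assertions simultaneously by matching $\lieg^{\beta+}$ with the restricted-root description $\lieq_I$ of parabolic subalgebras recalled just above. I would begin with the routine observation that $\lieg^{\beta+}$ is a subalgebra: since $\ad\beta$ is a derivation, the Jacobi identity gives $[V_\lambda(\ad\beta), V_\mu(\ad\beta)] \subseteq V_{\lambda+\mu}(\ad\beta)$, and $\{\lambda \geq 0\}$ is closed under addition; here the eigenspace decomposition of $\lieg$ under $\ad\beta$ is available because $\ad\beta$ is diagonalizable over $\R$.

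The heart of the argument is one computation, used in both directions. Given $\beta \in \liep$, enlarge $\R\beta$ to a maximal abelian subalgebra $\lia \subseteq \liep$; then $\beta \in \lia$, and since $[\beta, X] = \alpha(\beta) X$ for $X \in \lieg_\alpha$ while $\ad\beta$ annihilates $\lieg_0 = \liem \oplus \lia$, one gets $\lieg^{\beta+} = \lieg_0 \oplus \bigoplus_{\alpha(\beta) \geq 0} \lieg_\alpha$. As $\beta$ lies in the closed Weyl chamber of some base $\simple$ of $\roots(\lieg,\lia)$, I may assume $\alpha(\beta) \geq 0$ for all $\alpha \in \roots_+$; put $I := \{\alpha \in \simple : \alpha(\beta) = 0\}$, so that $\alpha(\beta) > 0$ strictly for $\alpha \in \simple \setminus I$. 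Writing an arbitrary root in the simple basis and using the signs of its coefficients then shows $\{\alpha \in \roots : \alpha(\beta) \geq 0\} = \roots_+ \cup \roots_I$, hence $\lieg^{\beta+} = \lieq_I$; this gives the first assertion. For the second assertion I would invoke the converse description recalled in the text ($\lieq = \lieq_I$ for suitable $\lia, \simple, I$) and pick $\beta \in \lia$ with $\alpha(\beta) = 0$ on $I$ and $\alpha(\beta) = 1$ on $\simple \setminus I$; the same computation yields $\lieg^{\beta+} = \lieq_I = \lieq$.

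For the cone statement, suppose $\lieg^{\beta+} = \lieq$. Because $\theta\beta = -\beta$ and $\theta$ is an automorphism, $\theta$ sends $V_\lambda(\ad\beta)$ to $V_{-\lambda}(\ad\beta)$, so $\lieq \cap \theta\lieq = V_0(\ad\beta) = \lieg^\beta$, and $\beta$ centralizes this space; hence $\beta \in \liez(\lieq \cap \theta\lieq) \cap \liep$, which equals $\lia_I$ by \eqref{liaI}. Conversely, for $\beta \in \lia_I$ one checks that $\lieg^{\beta+} = \lieq_I$ forces $\alpha(\beta) > 0$ for every $\alpha \in \simple \setminus I$: if instead $\alpha(\beta) = 0$ for some such $\alpha$, then $\lieg_{-\alpha}$ would also lie in $\lieg^{\beta+}$, contradicting the direct-sum decomposition \eqref{para-dec} of $\lieq_I$. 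Combining the two directions, $\{\beta \in \liep : \lieg^{\beta+} = \lieq\} = \{\beta \in \lia_I : \alpha(\beta) > 0 \text{ for all } \alpha \in \simple \setminus I\}$, which is manifestly an open convex cone inside $\lia_I = \liez(\lieq \cap \theta\lieq) \cap \liep$.

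The main obstacle is the combinatorial bookkeeping in the middle step: translating between the single-element picture (eigenvalues of $\ad\beta$) and the structural picture (a base $\simple$ together with a subset $I$), and in particular verifying that the closed condition $\alpha(\beta) \geq 0$ cuts out exactly $\roots_+ \cup \roots_I$ with no stray negative roots. The remaining ingredients — that $\lieg^{\beta+}$ is a subalgebra, the $\theta$-behaviour of the eigenspaces, and the identity \eqref{liaI} — are either immediate or already recorded in the excerpt.
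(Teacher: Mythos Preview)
The paper does not prove this lemma; it merely records the statement and cites \cite[Lemma 8]{LA} for the proof. Your argument is correct and is the standard one: you identify $\lieg^{\beta+}$ with $\lieq_I$ for a suitable subset $I\subset\simple$ of a restricted base, using the sign analysis of $\alpha(\beta)$ on positive and negative roots, and then read off the cone description from the equality $\lia_I=\liez(\lieq\cap\theta\lieq)\cap\liep$ already recorded in \eqref{liaI}. There is nothing to compare against in the present paper, but your proof matches the approach of the cited reference.
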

A \enf {parabolic subgroup} of $G$ is a subgroup of the form $Q =
N_G(\lieq)$ where $\lieq $ is a parabolic subalgebra of $\lieg$.
Equivalently, a parabolic subgroup of $G$ is a subgroup of the form
$P\cap G$ where $P$ is parabolic subgroup of $G^\C$ and $\liep$ is the
complexification of a subspace $\lieq \subset \lieg$.  If $\beta
\in \liep$ set
\begin{gather*}
\begin{gathered}
  G^{\beta+} :=\{g \in G : \lim_{t\to - \infty} \text{exp} ({t\beta}) g
  \text{exp} ({-t\beta}) \text { exists} \}\\
  R^{\beta+} :=\{g \in G : \lim_{t\to - \infty} \text{exp} ({t\beta})
  g \text{exp} ({-t\beta}) =e \}\\
  G^\beta=\{g\in G:\, \mathrm{Ad}(g)(\beta)=\beta\}
\end{gathered}
\qquad
  \lier^{\beta+}: = \bigoplus_{\la > 0} V_\la (\ad (\beta)).
\end{gather*}
Note that $ \lieg^{\beta+}= \lieg^\beta \oplus \lier^{\beta+}$.
\begin{lemma}\label{parabolicc}\cite[Lemma 9]{LA}
If $G$ is connected, then $G^{\beta +} $ is a parabolic subgroup of $G$ with Lie algebra
  $\lieg^{\beta+}$.  Every parabolic subgroup of $G$ equals
  $G^{\beta+}$ for some $\beta \in \liep$.  $R^{\beta+}$ is connected and it is the unipotent radical of $G^{\beta+}$ and $G^\beta$ is a Levi factor.
\end{lemma}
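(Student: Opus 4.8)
The plan is to prove Lemma~\ref{parabolicc} by reducing everything to the structure theory of parabolic subalgebras already established in Lemma~\ref{para-beta} and the explicit decomposition $\lieg^{\beta+}=\lieg^\beta\oplus\lier^{\beta+}$. First I would fix $\beta\in\liep$ and use that $\ad(\beta)$ is $\R$-diagonalizable to write $\lieg=\bigoplus_{\la}V_\la(\ad(\beta))$; this immediately gives that $\lier^{\beta+}$ is a nilpotent subalgebra (brackets raise eigenvalues, so it is $\ad$-nilpotent and, being finite dimensional with $[\lier^{\beta+},\lier^{\beta+}]\subset\lier^{\beta+}$, nilpotent as a Lie algebra), that $\lieg^\beta=V_0(\ad(\beta))$ normalizes it, and that $\lieg^{\beta+}$ is their semidirect sum. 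By Lemma~\ref{para-beta}, $\lieg^{\beta+}$ is parabolic.

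Next I would identify the groups. The key point is that for the one-parameter subgroup $a_t:=\exp(t\beta)$, the limit $\lim_{t\to-\infty}a_t g a_t^{-1}$ controls which eigen-components of $g$ survive: writing the action of $\Ad(a_t)$ on $\End\lieg$ (or on a faithful linear representation coming from the embedding $G\subset\Gl(N,\C)$) in eigenspace coordinates, $\Ad(a_t)$ contracts the positive-weight part as $t\to-\infty$. So $G^{\beta+}$ is exactly the set of $g$ whose "negative-weight matrix entries" vanish, i.e. the stabilizer of a flag, which is a closed subgroup; its Lie algebra is the set of $\xi\in\lieg$ with $\lim_{t\to-\infty}\Ad(a_t)\xi$ existing, namely $\bigoplus_{\la\geq 0}V_\la(\ad\beta)=\lieg^{\beta+}$. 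Similarly $R^{\beta+}$ is the set of $g$ with $\lim_{t\to-\infty}a_t g a_t^{-1}=e$, which on the linear model is a unipotent algebraic subgroup, hence connected, with Lie algebra $\lier^{\beta+}$; and $G^\beta$ is visibly the centralizer of $\beta$, a closed subgroup with Lie algebra $\lieg^\beta$. That $R^{\beta+}$ is the unipotent radical of $G^{\beta+}$ and $G^\beta$ a Levi factor then follows from the semidirect product decomposition $G^{\beta+}=G^\beta\ltimes R^{\beta+}$ at the Lie algebra level together with connectedness of $R^{\beta+}$; here I would invoke $G^\beta$ compatible (Lemma~\ref{lemcomp}\ref{lemcomp3}, applied to $E=\{\beta\}$) to control components. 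Finally, for the converse — every parabolic subgroup of $G$ equals $G^{\beta+}$ for some $\beta$ — I would take a parabolic subgroup $Q=N_G(\lieq)$, use Lemma~\ref{para-beta} to get $\beta\in\liep$ with $\lieq=\lieg^{\beta+}$, note $G^{\beta+}\subseteq N_G(\lieg^{\beta+})=Q$, and conclude equality by a dimension/component count: both are closed with the same Lie algebra, and since $G$ is connected the normalizer of a parabolic is connected, so $Q=Q^o=(G^{\beta+})^o=G^{\beta+}$ (the last equality because $G^{\beta+}$ is a contracting-limit set, hence connected, being a product of the connected $R^{\beta+}$ with $G^\beta=Z_G(\beta)$ which is connected for $G$ connected reductive and $\beta\in\liep$).

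The main obstacle I expect is the careful passage between the analytic description via the limits $\lim_{t\to-\infty}\exp(t\beta)g\exp(-t\beta)$ and the algebraic/flag description: one must justify that these limits, taken in $G$ (not in the ambient $U^\C$ or $\Gl(N,\C)$), detect precisely the nonnegative-eigenspace condition, and that the resulting subsets are genuinely subgroups with the claimed Lie algebras. Since any compatible $G$ is a closed linear group, I would do this computation in a fixed faithful representation, decompose $\C^N$ into $\ad(\beta)$-weight spaces (equivalently $\Ad(a_t)$-weight spaces), and check that $g\in G^{\beta+}$ iff $g$ preserves the decreasing filtration by weights — a standard but slightly fiddly linear-algebra argument — and then transport connectedness of the unipotent radical from the algebraic group $G^\C$. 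The compatibility of $G^\beta$ and the reductivity of $\lieg$ (hence connectedness of centralizers of elements of $\liep$ in the connected case) are the structural inputs that make the Levi decomposition and the converse statement go through cleanly.
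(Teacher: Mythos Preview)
The paper does not supply its own proof of this lemma: it is stated with a citation to \cite[Lemma 9]{LA} and no argument is given in the body of the present paper. So there is nothing to compare your proposal against here.

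That said, your sketch is a faithful reconstruction of the standard argument (and of what one finds in the cited source): decompose $\lieg$ into $\ad(\beta)$-eigenspaces, pass to a faithful linear realization of $G\subset\Gl(N,\C)$ to read the limit conditions $\lim_{t\to-\infty}\exp(t\beta)g\exp(-t\beta)$ as block-upper-triangularity with respect to the weight filtration, and then use that unipotent algebraic groups are connected together with connectedness of $Z_G(\beta)$ for $\beta\in\liep$ in a connected reductive $G$. One small point worth tightening: the step ``since $G$ is connected the normalizer of a parabolic is connected'' is true for real reductive groups but is not as immediate as in the complex case; it follows, e.g., from the Langlands decomposition $Q_I=M_I A_I N_I$ (cf.\ \cite[Prop.~7.83]{knapp-beyond}), and you should cite such a result rather than assert it. With that reference in place your argument is complete.
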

\subsection{Gradient map}
\label{subsection-gradient-moment}
Let $(Z, \om)$ be a \Keler manifold. Let $U^\C$ acts
holomorphically on $Z$ i.e., the action $U^\C \times Z \to Z$ is holomorphic. Assume that $U$ preserves $\om$ and that there is a $U$-equivariant
momentum map $\mu: Z \ra \liu$.  If $\xi \in \liu$, we denote by $\xi_Z$
the induced vector field on $Z$ and we let $\mu^\xi \in \cinf(Z)$ be
the function $\mu^\xi(z) := \sx \mu(z),\xi\xs$, where $\sx \cdot , \cdot \xs$ is an $\mathrm{Ad}(U)$-invariant scalar product on $\mathfrak u^\C$. We may also assume that the  multiplication by $i$ is an isometry from $\mathfrak u$ onto $i\mathfrak u$ and $\langle \liu , i \liu \rangle =0$ (\cite[$p. 428$]{LAP}. By definition, we have
$$
d\mu^\xi =
i_{\xi_Z} \om.
$$

Let $G \subset U^\C$ be a compatible subgroup of $U^\C$.
For $x\in Z$, let $\mu_\mathfrak{p}^\beta(x)$ denote $-i$ times the component of $\mu (x)$ along $\beta$ in the direction of $i\mathfrak{p},$ i.e.,
\begin{equation}\label{mu3}
\mu_\mathfrak{p}^\beta(x) := \langle \mu_\mathfrak{p}(x), \beta \rangle =\langle i \mu(x),\beta \rangle = \langle \mu(x), -i\beta \rangle = \mu^{-i\beta} (x)
\end{equation} for any $\beta \in \mathfrak{p}.$
Then, the map defined by
\begin{gather*}
  \mu_\liep : Z \ra \liep
\end{gather*} is called the $G$ \emph{gradient map}.
Let $\metrica$ be the \Keler
metric associated to $\om$, i.e. $(v, w) = \om (v, Jw),$ for all $z\in Z$ and $v,w\in T_zZ$ where $J$ denotes the complex structure on $TZ$. Then
$\beta_Z $ is the gradient of $\mup^\beta$, where $\beta_Z$ is the vector field on $Z$ corresponding to $\beta$ and the gradient is computed with respect to $(\cdot,\cdot)$. For the rest of this paper, fix a $G$-invariant locally closed submanifold $X$ of $Z.$ We denote the restriction of $\mu_\mathfrak{p}$ to $X$ by $\mu_\mathfrak{p}.$ Then
$$
\text{grad}\mu_\mathfrak{p}^\beta = \beta_X,
$$ where grad is now computed with respect to the induced Riemannian metric on $X.$ Since $X$ is $G$-stable, $\beta_X=\beta_Z$. Similarly, $\bot$ denotes perpendicularity relative to the Riemannian metric on $X.$
We will now recall some of the properties of the gradient map.
\begin{lemma}
Let $x\in X$ and let $\beta \in \mathfrak{p}.$ Then either $\beta_X(x) = 0$ or the function $t\mapsto \mu_\mathfrak{p}^\beta(\text{exp}(t\beta)\cdot x)$ is strictly increasing.
\end{lemma}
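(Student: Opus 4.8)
The plan is to compute the time derivative of $g(t) := \mu_\liep^\beta(\exp(t\beta)\cd x)$ and show it is a sum of squares. First I would set $x_t := \exp(t\beta)\cd x$ and recall that $\operatorname{grad}\mu_\liep^\beta = \beta_X$ on $X$; since $\frac{d}{dt}x_t = \beta_X(x_t)$ by definition of the flow of a fundamental vector field, the chain rule gives
\[
g'(t) = d\mu_\liep^\beta\big|_{x_t}\big(\beta_X(x_t)\big) = \big(\operatorname{grad}\mu_\liep^\beta(x_t),\beta_X(x_t)\big) = \big(\beta_X(x_t),\beta_X(x_t)\big) = \|\beta_X(x_t)\|^2 \geq 0,
\]
where $\metrica$ is the induced Riemannian metric on $X$. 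So $g$ is nondecreasing, and $g'(t_0)=0$ for some $t_0$ precisely when $\beta_X(x_{t_0})=0$.

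The remaining point is the dichotomy: either $\beta_X(x) = 0$ identically (so $t\mapsto g(t)$ is constant), or $g$ is \emph{strictly} increasing. For this I would argue that the zero set of $t\mapsto \beta_X(x_t)$ along the orbit curve is either empty or all of $\R$. The key observation is that $\beta_X(x_{t_0}) = 0$ means $x_{t_0}$ is a fixed point of the one-parameter group $\exp(t\beta)$: indeed $\frac{d}{ds}\exp(s\beta)\cd x_{t_0} = \beta_X(\exp(s\beta)\cd x_{t_0})$, and the unique solution of this ODE with initial value a zero of $\beta_X$ lying in the fixed-point set is the constant curve, so $\exp(s\beta)\cd x_{t_0} = x_{t_0}$ for all $s$. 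But $x_{t_0} = \exp(t_0\beta)\cd x$, so this forces $\exp(s\beta)\cd x = x$ for all $s$, hence $\beta_X(x) = \frac{d}{ds}\big|_{s=0}\exp(s\beta)\cd x = 0$. Contrapositively, if $\beta_X(x)\neq 0$ then $\beta_X(x_t)\neq 0$ for every $t$, so $g'(t) = \|\beta_X(x_t)\|^2 > 0$ for all $t$ and $g$ is strictly increasing.

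The only mild subtlety is justifying that a zero of $\beta_X$ in the orbit closure is genuinely a fixed point of the flow, which is just uniqueness for the flow ODE of a smooth (even real-analytic along orbits) vector field together with the group law $\exp((s+t_0)\beta) = \exp(s\beta)\exp(t_0\beta)$; there is no real obstacle here. I would present the computation of $g'$ first, then the fixed-point dichotomy, and conclude. Note this lemma is essentially the infinitesimal shadow of the convexity of the Kempf–Ness function along geodesics, and the proof above makes no use of compactness or connectedness of $X$.
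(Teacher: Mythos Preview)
Your proof is correct and follows the same approach as the paper: compute $g'(t)=\|\beta_X(x_t)\|^2\geq 0$ via $\operatorname{grad}\mu_\liep^\beta=\beta_X$ and then deduce the dichotomy. Your fixed-point argument for the dichotomy is in fact more complete than the paper's version, which simply asserts the conclusion after the derivative computation without explaining why $\beta_X(x)\neq 0$ forces $g'(t)>0$ for every $t$.
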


\begin{proof}
Let $f(t) = \mu_\mathfrak{p}^\beta(\text{exp}(t\beta)\cdot x) = \langle \mu_\mathfrak{p}(\text{exp}(t\beta)\cdot x), \beta \rangle.$ Then
$$f'(t) = ( \beta_X(\text{exp}(t\beta)\cdot x), \beta_X(\text{exp}(t\beta)\cdot x)  \geq 0.
$$
Therefore $\beta_X (x)=0$ or the function $f$ is strictly increasing. 
\end{proof}
For any subspace $\mathfrak{m}$ of $\mathfrak{g}$ and $x\in X,$ let $$\mathfrak{m}\cdot x := \{\xi_X(x) : \xi \in \mathfrak{m}\}.$$
\begin{lemma}
Let $x\in X$. Then $$\text{ker}\,\, d\mu_\mathfrak{p}(x) = (\mathfrak{p}\cdot x)^\bot $$
\end{lemma}

\begin{proof}
From (\ref{mu3}),  $v\in \text{ker}\,\, d\mu_\mathfrak{p}(x)$ if and only if for all $\beta \in \mathfrak{p}$
\begin{align*}
     & \langle d\mu_\mathfrak{p}(x)(v), \beta \rangle = 0\\
     & \Longleftrightarrow d\mu_\mathfrak{p}^\beta (v) = 0\\
    & \Longleftrightarrow \langle \beta_X(x), v\rangle = 0.
\end{align*}
\end{proof}

\begin{lemma}
Let $x\in X.$ The following are equivalent:

\begin{enumerate}
    \item $d\mu_\mathfrak{p} : T_xX \to \mathfrak{p}$ is onto;
    \item $d\mu_\mathfrak{p} : \mathfrak{g}\cdot x \to \mathfrak{p}$ is onto;
    \item the map $\mathfrak p  \to T_x X$, $\beta \mapsto \beta_X$, is injective.
\end{enumerate}
\end{lemma}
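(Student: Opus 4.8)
The plan is to prove the equivalence by a cyclic chain of implications, exploiting the previous lemma that identifies $\ker d\mu_\liep(x)$ with $(\liep\cdot x)^\bot$, together with the basic identities $\grad\mu_\liep^\beta=\beta_X$ and the decomposition $\lieg=\liek\oplus\liep$.

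First I would prove (a) $\Leftrightarrow$ (c). By the preceding lemma, $d\mu_\liep(x)\colon T_xX\to\liep$ is onto if and only if its kernel $(\liep\cdot x)^\bot$ has codimension equal to $\dim\liep$, i.e. if and only if $\dim(\liep\cdot x)=\dim\liep$. But $\liep\cdot x$ is the image of the linear map $\liep\to T_xX$, $\beta\mapsto\beta_X(x)$, so $\dim(\liep\cdot x)=\dim\liep$ exactly when this map is injective, which is (c). Alternatively, and perhaps more cleanly, I would argue directly: the transpose (with respect to the chosen inner products) of $d\mu_\liep(x)$ is precisely the map $\beta\mapsto\beta_X(x)$, since for $v\in T_xX$ and $\beta\in\liep$ we have $\langle d\mu_\liep(x)v,\beta\rangle = d\mu_\liep^\beta(x)v = (\beta_X(x),v)$; hence surjectivity of $d\mu_\liep(x)$ is equivalent to injectivity of its transpose, giving (a) $\Leftrightarrow$ (c) at once.

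Next I would show (b) $\Rightarrow$ (a), which is immediate since $\lieg\cdot x\subset T_xX$ (as $X$ is $G$-stable), so if the restriction to $\lieg\cdot x$ is onto then so is the full differential. The remaining implication (a) $\Rightarrow$ (b) — equivalently (c) $\Rightarrow$ (b) — is the one step requiring a genuine idea. Here I would use that for $\xi=\xi_1+i\xi_2$ in $\liu^\C$ with $\xi_1\in\liu$, $\xi_2\in\liu$, the real vector field $\xi_Z$ splits and, restricting to $\liek\oplus\liep$, one has for $\beta\in\liep$ that $\beta_X(x)=\grad\mu_\liep^\beta(x)=J(\text{Hamiltonian vector field})$, so that $\beta_X(x)$ lies in the image of $d\mu_\liep(x)^{\mathrm{t}}$ composed with... more concretely: since $d\mu_\liep^\beta = (\beta_X,\cdot)$ and the image of $d\mu_\liep(x)$ restricted to $\lieg\cdot x$ contains all $d\mu_\liep(x)(\beta_X(x))$ for $\beta\in\liep$, I would show $d\mu_\liep(x)(\liep\cdot x) = d\mu_\liep(x)(T_xX)$ using that $\ker d\mu_\liep(x)=(\liep\cdot x)^\bot$, so $d\mu_\liep(x)$ is injective on $\liep\cdot x$ and $d\mu_\liep(x)(\liep\cdot x)$ has dimension $\dim(\liep\cdot x)$; under hypothesis (c) this equals $\dim\liep$, so $d\mu_\liep(x)(\liep\cdot x)=\liep$, and since $\liep\cdot x\subset\lieg\cdot x$ we get (b).

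The main obstacle is making the passage from surjectivity of the full differential to surjectivity along the orbit tangent space $\lieg\cdot x$ rigorous; the clean way around it is to observe that $d\mu_\liep(x)$ already factors its image through $\liep\cdot x$ because $\ker d\mu_\liep(x)=(\liep\cdot x)^\bot$ forces $d\mu_\liep(x)|_{\liep\cdot x}$ to be injective with image of dimension $\dim(\liep\cdot x)$, and this number is governed entirely by condition (c). So in fact all three conditions are equivalent to the single numerical statement $\dim(\liep\cdot x)=\dim\liep$, and I would organize the proof around establishing each of (a), (b), (c) $\Leftrightarrow$ that common reformulation, which keeps every step to a one-line dimension count plus the already-proved kernel lemma.
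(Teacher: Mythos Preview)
Your proposal is correct and, once you strip away the exploratory detours, lands on exactly the argument the paper gives: using $\ker d\mu_\liep(x)=(\liep\cdot x)^\bot$ to reduce all three conditions to the single numerical statement $\dim(\liep\cdot x)=\dim\liep$, with surjectivity on $\liep\cdot x\subset\lieg\cdot x$ giving (b). The paper just states this in two lines without the intermediate cyclic-implication scaffolding or the transpose/Hamiltonian digressions.
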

\begin{proof}
$$\text{ker}\,\, d\mu_\mathfrak{p}(x) = (\mathfrak{p}\cdot x)^\bot,$$ it follows that $d\mu_\mathfrak{p}(x)$ \text{is surjective} if and only if
$d\mu_\mathfrak{p} : \mathfrak{p}\cdot x \to \mathfrak{p}$ \text{is surjective} if and only if $\dim \mathfrak p \cdot x=\dim \mathfrak p$,  concluding the proof.
\end{proof}
We recall the Slice Theorem, see \cite{heinzner-schwarz-stoetzel}. For any Lie group $G,$ a closed subgroup $H$ and any set $S$ with an $H-$action, the $G-$bundle over $G/H$ associated with the $H-$principal bundle $G\to G/H$ is denoted by $G\times^H S.$ This is the orbit space of the $H-$action on $G\times S$ given by $h\cdot(g,s) = (gh^{-1}, h\cdot s)$ where $g\in G,$ $s\in S$ and $h\in H.$ The $H-$orbit of $(g,s),$ considered as a point in $G\times^H S,$ is denoted by $[g,s].$
\begin{theorem}
  [Slice Theorem \protect{\cite[Thm. 3.1]{heinzner-schwarz-stoetzel}}]
  If $x \in X$ and $\mup(x) = 0$, there are a $G_x$-invariant
  decomposition $T_xX = \lieg \cd x \oplus W$, open $G_x$-invariant
  subsets $S \subset W$, $\Omega \subset X$ and a $G$-equivariant
  diffeomorphism $\Psi : G \times^{G_x}S \ra \Omega$, such that $0\in
  S, x\in \Omega$ and $\Psi ([e, 0]) =x$.
\end{theorem}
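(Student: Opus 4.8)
The statement is \cite[Thm.~3.1]{heinzner-schwarz-stoetzel}, so the plan is to follow that argument; it splits into three tasks: identify the isotropy group $G_x$, split $T_xX$ $G_x$-equivariantly, and glue the resulting linear model into a tube.

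First I would show that $\mup(x)=0$ forces $G_x$ to be compatible, with $\mathfrak g_x=\mathfrak k_x\oplus\mathfrak p_x$, where $\mathfrak k_x=\mathfrak g_x\cap\mathfrak k$ and $\mathfrak p_x=\mathfrak g_x\cap\mathfrak p$. The engine is the orthogonality $\mathfrak k\cd x\perp\mathfrak p\cd x$ in $T_xX$ at a zero of $\mup$: for $\xi\in\mathfrak k$ and $\beta\in\mathfrak p$ one has $\langle\xi_X(x),\beta_X(x)\rangle=d\mu_\mathfrak{p}^\beta(\xi_X(x))=\frac{d}{dt}\big\vert_{t=0}\mu_\mathfrak{p}^\beta(\exp(t\xi)\cd x)$, and since $\mup$ is $K$-equivariant and $\mup(x)=0$ the curve $t\mapsto\mup(\exp(t\xi)\cd x)=\Ad(\exp t\xi)\mup(x)$ is identically $0$, so this derivative vanishes. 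Consequently, writing $\xi=\xi_\mathfrak k+\xi_\mathfrak p\in\mathfrak g_x$, the identity $(\xi_\mathfrak k)_X(x)+(\xi_\mathfrak p)_X(x)=\xi_X(x)=0$ together with orthogonality of the two summands forces each to vanish; hence $\mathfrak g_x$ is $\theta$-stable and splits as claimed. A monotonicity argument — the function $t\mapsto\mu_\mathfrak{p}^\beta(\exp(t\beta)\cd x)$ is constant or strictly increasing — then shows moreover $G_x=K_x\exp(\mathfrak p_x)$, so $G_x$ has finitely many components and is compatible by Lemma \ref{lemcomp}. I would also record that $\mathfrak g\cd x=\mathfrak k\cd x\oplus\mathfrak p\cd x$ is an orthogonal direct sum and that $\beta\mapsto\beta_X(x)$ is a $K_x$-isomorphism $\mathfrak p/\mathfrak p_x\to\mathfrak p\cd x$.

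Second I would produce the $G_x$-invariant complement. Put $W:=(\mathfrak g\cd x)^\bot$ in $T_xX$, so that $T_xX=\mathfrak g\cd x\oplus W$; since $K$ preserves the induced metric on $X$, the compact group $K_x$ preserves this splitting. For the $\exp(\mathfrak p_x)$-part, the key point is that for $\beta\in\mathfrak p_x$ the isotropy operator $\rho(\beta)$ on $T_xX$ is self-adjoint with respect to the metric: indeed $\beta_X=\operatorname{grad}\mu_\mathfrak{p}^\beta$, the point $x$ is a critical point of $\mu_\mathfrak{p}^\beta$ because $\beta\in\mathfrak g_x$, and the linearization at a critical point of a gradient vector field is the Hessian of the potential read through the metric, hence self-adjoint — and this linearization is $\rho(\beta)$ up to sign. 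Since $\mathfrak g\cd x$ is $\rho(\mathfrak p_x)$-invariant (being $G_x$-invariant), self-adjointness forces its orthogonal complement $W$ to be $\rho(\mathfrak p_x)$-invariant too; together with $G_x=K_x\exp(\mathfrak p_x)$ this shows $W$ is $G_x$-invariant.

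Third I would assemble the tube, and this is the step I expect to be the main obstacle. Via the classical slice theorem for the compact group $K$ acting on $X$, a $K_x$-invariant neighbourhood of $0$ in $(\mathfrak k\cd x)^\bot=\mathfrak p\cd x\oplus W$ is $K$-equivariantly diffeomorphic to a $K$-invariant neighbourhood of $K\cd x$; through the identifications of the first step this is a $K$-equivariant tube modelled on $G\times^{G_x}W$. What remains is to upgrade $K$-equivariance to $G$-equivariance, i.e. to manufacture a $G_x$-equivariant map $\psi$ from a neighbourhood of $0$ in $W$ into $X$ with $\psi(0)=x$ and $d\psi_0$ the inclusion; one cannot simply average the Riemannian exponential, because $G_x$ is not compact. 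Following \cite{heinzner-schwarz-stoetzel}, I would transport the $K$-slice along the $\exp(\mathfrak p)$-directions, using the monotonicity and convexity of $\mup$ along such directions to keep the construction under control, and then set $\Psi([g,w]):=g\cd\psi(w)$, which is a well-defined $G$-equivariant local diffeomorphism along the zero section. Finally, shrinking $W$ to a $G_x$-invariant $S$ and using that $G\cd x$ is closed (again a consequence of $\mup(x)=0$), I would make $\Psi$ injective on $G\times^{G_x}S$, hence a $G$-equivariant diffeomorphism onto an open $\Omega\ni x$ with $\Psi([e,0])=x$. The genuine difficulty, and the place the real work sits, is this passage from purely local, $K$-level data to a global $G$-equivariant diffeomorphism.
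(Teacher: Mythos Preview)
The paper does not prove this theorem at all: it is stated with a citation to \cite[Thm.~3.1]{heinzner-schwarz-stoetzel} and no proof is given, so there is nothing in the paper to compare your argument against. Your sketch is a faithful outline of the Heinzner--Schwarz--St\"otzel proof (compatibility of $G_x$ via $\liek\cd x\perp\liep\cd x$ at a zero of $\mup$, self-adjointness of the isotropy representation on $\liep_x$ to get the $G_x$-invariant complement, then the tube construction), and you are right that the serious work is in the third step; for the purposes of this paper the result is simply imported as a black box.
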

Let $\beta \in \mathfrak{p}$. It is well-known that $G^\beta$ is compatible and
\[
G^\beta=K^\beta \text{exp}  (\liep^\beta),
\]
where $K^\beta=K\cap G^\beta=\{g\in K:\, \mathrm{Ad}(g)(\beta)=\beta\}$ and
$\liep^\beta=\{v\in \liep:\, [v,\beta]=0\}$ (see \cite{knapp-beyond}).

\begin{corollary} \label{slice-cor} If $x \in X$ and $\mup(x) = \beta$,
  there are a $G^\beta$-invariant decomposition $T_xX = \lieg^\beta
  \cd x \, \oplus W$, open $G^\beta$-invariant subsets $S \subset W$,
  $\Omega \subset X$ and a $G^\beta$-equivariant diffeomorphism $\Psi
  : G^\beta \times^{G_x}S \ra \Omega$, such that $0\in S, x\in \Omega$
  and $\Psi ([e, 0]) =x$.
\end{corollary}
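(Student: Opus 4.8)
\section*{Proof proposal}

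The plan is to deduce the statement from the Slice Theorem above, applied not to $G$ but to the compatible subgroup $G^\beta$, after shifting the gradient map so that it vanishes at $x$.

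First I would assemble the data for $G^\beta$. As recalled just before the statement, $G^\beta$ is compatible with Cartan decomposition $G^\beta=K^\beta\exp(\liep^\beta)$; moreover $G^\beta$ is a compatible subgroup of the complex reductive group $(U^\C)^\beta$, which acts holomorphically on $Z$ and whose maximal compact subgroup $U^\beta=U\cap(U^\C)^\beta$ preserves $\om$, so that $G^\beta$ acts in the set-up to which the Slice Theorem applies. Since $i\beta\in\liu$ is fixed by $\Ad(U^\beta)$, the map $\mu+i\beta$ is again a momentum map, now for the $U^\beta$-action on $Z$, and the corresponding $G^\beta$-gradient map on $X$ is $\nu:=\mu_{\liep^\beta}-\beta$, where $\mu_{\liep^\beta}:=\pi_{\liep^\beta}\circ\mu_\liep$ and $\pi_{\liep^\beta}\colon\liep\to\liep^\beta$ is the orthogonal projection. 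Because $[\beta,\beta]=0$ we have $\beta\in\liep^\beta$, hence $\mu_{\liep^\beta}(x)=\pi_{\liep^\beta}(\mu_\liep(x))=\pi_{\liep^\beta}(\beta)=\beta$ and therefore $\nu(x)=0$.

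Next I would check that $G_x\subseteq G^\beta$, so that $(G^\beta)_x=G_x$. For $k\in K_x$ this is immediate from $K$-equivariance of $\mu_\liep$, since $\Ad(k)\beta=\Ad(k)\mu_\liep(x)=\mu_\liep(k\cdot x)=\mu_\liep(x)=\beta$ forces $k\in K^\beta$; the analogous statement for the whole stabilizer when $\mu_\liep(x)=\beta$ is a standard property of the gradient map, established in \cite{heinzner-schwarz-stoetzel}. Granting this, I would apply the Slice Theorem above to the $G^\beta$-action on $X$ with respect to the gradient map $\nu$ at the point $x$, where $\nu(x)=0$: this produces a $(G^\beta)_x$-invariant decomposition $T_xX=\lieg^\beta\cdot x\oplus W$, open $G^\beta$-invariant subsets $S\subset W$ and $\Omega\subset X$, and a $G^\beta$-equivariant diffeomorphism $\Psi\colon G^\beta\times^{(G^\beta)_x}S\to\Omega$ with $0\in S$, $x\in\Omega$ and $\Psi([e,0])=x$. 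Since $(G^\beta)_x=G_x$, this is precisely the asserted conclusion.

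Everything here is routine except for one point of care, which I would regard as the main (though not deep) obstacle: one must verify that $\nu$ honestly fits the hypotheses of the Slice Theorem as the gradient map of a Hamiltonian $G^\beta$-action — equivalently, that passing from $\mu_\liep$ to $\mu_{\liep^\beta}-\beta$ amounts to shifting the ambient $U^\beta$-momentum map by an $\Ad(U^\beta)$-invariant constant — and that the isotropy group $G_x$ really sits inside $G^\beta$. Once these are in place, the corollary follows immediately.
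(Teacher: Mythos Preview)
Your proof is correct and follows essentially the same approach as the paper: apply the Slice Theorem to the compatible subgroup $G^\beta\subset (U^{\,i\beta})^\C$ with the shifted gradient map $\widehat{\mu_{\liep^\beta}}=\mu_{\liep^\beta}-\beta$, which vanishes at $x$. The paper's argument is terser and, like you, defers the details to \cite{heinzner-schwarz-stoetzel}; your explicit observation that $(G^\beta)_x=G_x$ (i.e.\ $G_x\subseteq G^\beta$) is a welcome clarification of a point the paper leaves implicit in the notation $G^\beta\times^{G_x}S$.
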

This follows applying the previous theorem to the action of $G^\beta$ on $X$. Indeed, by Lemma \ref{lemcomp} $G^\beta=K^\beta\exp(\liep^\beta)$ is compatible and the orthogonal projection of $\textbf{i} \mu$ onto $\liep^\beta$ is the $G^\beta$-gradient map $\mu_{\liep^\beta}$. The group $G^\beta$ is also compatible with the Cartan decomposition of $(U^\C )^{\beta}=(U^\C )^{\textbf{i} \beta}=(U^{\textbf{i}\beta} )^\C$ and $\textbf{i} \beta$ is fixed by the $U^{\textbf{i}\beta}$-action on $\liu^{\textbf{i}\beta}$. This implies that $\widehat{\mu_{\liu^{\textbf{i} \beta}}}:Z \lra \liu^{\textbf{i} \beta}$
given by $\widehat{\mu_{\liu^{\textbf{i}\beta}}(z)}=\pi_{\liu^{\textbf{i}\beta}} \circ \mu+\textbf{i}\beta$, where $\pi_{\liu^{\textbf{i}\beta}}$ is the orthogonal projection of $\liu$ onto  $\liu^{\textbf{i}\beta}$, is the $U^{\textbf{i}\beta}$-shifted momentum map. The associated $G^{\beta}$-gradient map is given by $\widehat{\mu_{\liep^\beta}} := \mu_{\liep^\beta} -
\beta$. Hence, if $G$ is commutative, then we have a Slice Theorem for $G$ at every point of $X$,
see \cite[p.$169$]{heinzner-schwarz-stoetzel} for more details.


If $\beta \in \liep$, \changed{then $\beta_X$ is a vector field on
  $X$, i.e. a section of $TX$. For $x\in X$, the differential is a map
  $T_xX \ra T_{\beta_X(x)}(TX)$. If $\beta_X(x) =0$, there is a
  canonical splitting $T_{\beta_X(x)}(TX) = T_xX \oplus
  T_xX$. Accordingly $d\beta_X(x)$ splits} into a horizontal and a
vertical part. The horizontal part is the identity map. We denote the
vertical part by $d\beta_X(x)$.  It belongs to $\End(T_xX)$.  Let
$\{\phi_t=\text{exp} (t\beta)\} $ be the flow of $\beta_X$.  \changed{There
  is a corresponding flow on $TX$. Since $\phi_t(x)=x$, the flow on
  $TX$ preserves $T_xX$ and there it is given by $d\phi_t(x) \in
  \Gl(T_xX)$.  Thus we get a linear $\R$-action on $T_xX$ with
  infinitesimal generator $d\beta_X(x) $.}
\begin{corollary}
  \label{slice-cor-2}
  If $\beta \in \liep $ and $x \in X$ is a critical point of $\mupb$,
  then there are open invariant neighbourhoods $S \subset T_xX$ and
  $\Omega \subset X$ and an $\R$-equivariant diffeomorphism $\Psi : S
  \ra \Omega$, such that $0\in S, x\in \Omega$, $\Psi ( 0) =x$. \changed{(Here
  $t\in \R$ acts as $d\phi_t(x)$ on $S$ and as $\phi_t$ on $\Omega$.)}
\end{corollary}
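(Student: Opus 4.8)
The plan is to deduce this from Corollary \ref{slice-cor} by passing to the shifted picture explained in the paragraph preceding it. First I would set $\beta' := \mup(x) \in \liep$ and recall that, since $x$ is a critical point of $\mupb$, we have $\beta_X(x) = \grad \mu_\liep^\beta(x) = 0$; the standard computation (as in the discussion after Corollary \ref{slice-cor}) shows that this forces $\beta \in \liep^{\beta'}$, i.e. $[\beta,\beta']=0$ and in fact $\beta'$ and $\beta$ lie in a common abelian subalgebra. Replacing $\mup$ by the shifted $G^{\beta'}$-gradient map $\widehat{\mu_{\liep^{\beta'}}} = \mu_{\liep^{\beta'}} - \beta'$, whose value at $x$ is $0$, Corollary \ref{slice-cor} applied to the compatible subgroup $G^{\beta'} = K^{\beta'}\exp(\liep^{\beta'})$ gives a $G^{\beta'}$-invariant decomposition $T_xX = \lieg^{\beta'}\cd x \oplus W$, open invariant subsets $S_0 \subset W$, $\Omega \subset X$ with $0 \in S_0$, $x \in \Omega$, and a $G^{\beta'}$-equivariant diffeomorphism $\Psi_0 : G^{\beta'}\times^{G_x} S_0 \ra \Omega$ with $\Psi_0([e,0]) = x$.

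Next I would promote this to a diffeomorphism from an open subset of the full tangent space $T_xX$. Since $\exp(t\beta) \in G^{\beta'}$ (because $[\beta,\beta']=0$), the flow $\phi_t = \exp(t\beta)$ preserves $\Omega$ and acts through the $G^{\beta'}$-action; under $\Psi_0$ it corresponds to the action on $G^{\beta'}\times^{G_x}S_0$ given by left translation $[g,s] \mapsto [\exp(t\beta)g, s]$. I want to replace the homogeneous-bundle model by a tangent-space model. Write $\liel := \lieg^{\beta'}$; the exponential-type map $\liel/\liel_x \times S_0 \to G^{\beta'}\times^{G_x}S_0$ (using a $K^{\beta'}$-invariant complement of $\liel_x$ in $\liel$, combined with the Cartan decomposition of $G^{\beta'}$) is a local diffeomorphism near $([e,0])$, and $\liel/\liel_x \cong \lieg^{\beta'}\cd x$ canonically. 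Composing, after shrinking, I obtain open invariant neighbourhoods $S \subset \lieg^{\beta'}\cd x \oplus W = T_xX$ and $\Omega' \subset \Omega$ and a diffeomorphism $\Psi : S \to \Omega'$ with $\Psi(0) = x$. It remains to check equivariance for the $\R$-action: on $\Omega'$, $t$ acts by $\phi_t$; on $S$, $t$ acts by the linearization, which on the $W$-factor is trivial (as $\exp(t\beta)$ acts on the slice $S_0$ fixing $0$ with derivative the identity there, since $x$ is a critical point, hence $d\phi_t(x)|_W$ is... ) — here one must be careful: the correct statement is that $t$ acts on $S$ as $d\phi_t(x)$, and this is exactly the derivative at $x$ of the flow on $\Omega'$ transported through $\Psi$, which is automatic once $\Psi$ is built from $\R$-equivariant (i.e. $\exp(t\beta)$-equivariant) data. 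So the $\R$-equivariance of $\Psi$ is inherited from the $G^{\beta'}$-equivariance of $\Psi_0$ together with $\exp(\R\beta) \subset G^{\beta'}$.

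The main obstacle is the passage from the bundle model $G^{\beta'}\times^{G_x}S_0$ to the linear model on $T_xX$ while keeping track of the $\R$-action and, in particular, verifying that the induced $\R$-action on $S \subset T_xX$ is precisely the linear flow generated by the vertical part $d\beta_X(x) \in \End(T_xX)$ defined just before the statement. This is where one uses that $\phi_t(x) = x$, so the flow on $TX$ preserves $T_xX$ and acts there linearly as $d\phi_t(x)$, and that $\Psi$ intertwines this with the flow on $\Omega'$ by construction. One should also note the degenerate subtlety that the tangent-space exponential map is only a local diffeomorphism, so all neighbourhoods must be shrunk to $\R$-invariant ones — possible because $d\phi_t(x)$ is a linear flow and $\phi_t$ on $\Omega$ has $x$ as a fixed point, so arbitrarily small invariant neighbourhoods exist on both sides.
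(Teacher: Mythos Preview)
Your route through $G^{\beta'}$ (with $\beta':=\mup(x)$) is far more involved than necessary and, as written, has two real gaps. First, the claim $[\beta,\beta']=0$ is not contained in ``the discussion after Corollary~\ref{slice-cor}''; that passage says nothing of the sort. The claim happens to be true, but it needs an argument: from $\beta_Z(x)=0$ and holomorphicity one gets $(-i\beta)_Z(x)=0$, hence $[\beta,i\mu(x)]=0$ by $U$-equivariance of $\mu$; one then has to check that orthogonal projection of $i\mu(x)$ onto $\liep$ preserves commutation with $\beta$, which uses that $\ad(\beta)$ is symmetric for $\beta\in i\liu$ together with $[\liep,\liek]\subset\liep$. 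Second, and more seriously, the passage from the bundle $G^{\beta'}\times^{G_x}S_0$ to an open set in $T_xX$ is not $\R$-equivariant for free. Your exponential-type chart is only a local diffeomorphism, and left translation by $\exp(t\beta)$ on the bundle does not correspond to the linear flow $d\phi_t(x)$ under a generic such chart; the sentence ``the $\R$-equivariance of $\Psi$ is inherited from the $G^{\beta'}$-equivariance of $\Psi_0$'' is a non-sequitur, since $\Psi$ is $\Psi_0$ composed with a chart that you have not shown to be $\R$-equivariant. Moreover, shrinking to an $\R$-invariant neighbourhood afterwards is delicate when $d\beta_X(x)$ has eigenvalues of both signs, since the linear flow then admits no small invariant open sets.

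The paper bypasses all of this in one line: apply Corollary~\ref{slice-cor} (together with the abelian shifting remark that follows it) not to $G$ but directly to the one-dimensional compatible subgroup $H:=\exp(\R\beta)$. Since $\beta_X(x)=0$ one has $H_x=H$ and $\lieh\cdot x=0$; hence the slice complement is $W=T_xX$, the associated bundle $H\times^{H_x}S$ collapses to $S$ itself, and the slice diffeomorphism is already a map $\Psi:S\to\Omega$ with $S\subset T_xX$. Its $H$-equivariance is precisely the required $\R$-equivariance, and the $H$-action on $S$ is the isotropy representation, i.e.\ $d\phi_t(x)$. No bundle-to-linear passage is needed at all.
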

\begin{proof}
Since $\exp:\liep \lra G$ is a diffeomorphism onto the image, the subgroup $H:=\text{exp} (\R \beta)$ is compatible.  Hence, It is enough to
 apply the previous corollary to the $H$-action at $x$.
\end{proof}

Assume now that $\beta \in \liep$ and that $x \in
\Crit(\mu^\beta_\liep)$. Let $D^2\mup^\beta(x) $ denote the Hessian,
which is a symmetric operator on $T_xX$ such that
\begin{gather*}
  ( D^2 \mup^\beta(x) v, v) = \frac{\mathrm d^2}{\mathrm
    dt^2} 
  (\mup^\beta\circ \ga)(0)
\end{gather*}
where $\ga$ is a smooth curve, $\ga(0) = x$ and $ \dot{\ga}(0)=v$.
Denote by $V_-$ (respectively $V_+$) the sum of the eigenspaces of the
Hessian of $\mupb$ corresponding to negative (resp. positive)
eigenvalues. Denote by $V_0$ the kernel.  Since the Hessian is
symmetric we get an orthogonal decomposition
\begin{gather}
  \label{Dec-tangente}
  T_xX = V_- \oplus V_0 \oplus V_+.
\end{gather}
Let $\alfa : G \ra X$ be the orbit map: $\alfa(g) :=gx$.  The
differential $d\alfa_e$ is the map $\xi \mapsto \xi_X(x)$.
\begin{proposition}
    \label{tangent}
  If $\beta \in \liep$ and $x \in \Crit(\mu^\beta_\liep)$ then
  \begin{gather*}
    D^2\mup^\beta(x) = d \beta_X(x).
  \end{gather*}
  Moreover $d\alfa_e (\lier^{\beta\pm} ) \subset V_\pm$ and $d\alfa_e(
  \lieg^\beta) \subset V_0$.  If $X$ is $G$-homogeneous these are
  equalities.
\end{proposition}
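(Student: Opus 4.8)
The plan is to split the statement into the operator identity, the three eigenspace inclusions, and the refinement in the $G$-homogeneous case.

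First I would prove $D^2\mup^\beta(x)=d\beta_X(x)$. On $X$ the gradient map satisfies $\operatorname{grad}\mup^\beta=\beta_X$, and since $x\in\Crit(\mup^\beta)$ we have $\beta_X(x)=0$, so the vertical endomorphism $d\beta_X(x)\in\End(T_xX)$ introduced just before the statement is defined. Differentiating $\tfrac{\mathrm d}{\mathrm dt}(\mup^\beta\circ\ga)=(\beta_X\circ\ga,\dot\ga)$ once more at $t=0$ along a curve with $\ga(0)=x$, $\dot\ga(0)=v$, the term involving the acceleration of $\ga$ drops out because $\beta_X(x)=0$, and what remains is $(D^2\mup^\beta(x)v,v)=(d\beta_X(x)v,v)$; as both operators are symmetric, this is the asserted operator identity. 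Consequently, by the remark preceding the statement, the linearized flow on $T_xX$ is $d\phi_t(x)=\exp\!\bigl(t\,d\beta_X(x)\bigr)=\exp\!\bigl(t\,D^2\mup^\beta(x)\bigr)$, and the orthogonal splitting $T_xX=V_-\oplus V_0\oplus V_+$ coincides with the splitting into the subspaces on which $d\phi_t(x)$ contracts, is trivial, and expands.

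Next I would obtain the inclusions from $G$-equivariance of the orbit map $\alfa$ together with a conjugation trick. Since $\beta_X(x)=0$ we have $\exp(\pm t\beta)\cdot x=x$, so for $\xi\in\lieg$
\[
\phi_t\bigl(\exp(s\xi)\cdot x\bigr)=\exp(t\beta)\exp(s\xi)\exp(-t\beta)\cdot x=\exp\!\bigl(s\,\Ad(\exp t\beta)\xi\bigr)\cdot x ,
\]
and differentiating in $s$ at $s=0$ gives
\[
d\phi_t(x)\bigl(d\alfa_e(\xi)\bigr)=d\alfa_e\bigl(e^{t\,\ad\beta}\,\xi\bigr),\qquad \xi\in\lieg .
\]
If $\xi\in V_\la(\ad\beta)$ the right-hand side equals $e^{t\la}\,d\alfa_e(\xi)$; comparing with $d\phi_t(x)=\exp\!\bigl(t\,D^2\mup^\beta(x)\bigr)$ and differentiating in $t$ at $0$ yields $D^2\mup^\beta(x)\,d\alfa_e(\xi)=\la\,d\alfa_e(\xi)$. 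Summing over the eigenvalues $\la>0$ occurring in $\lier^{\beta+}$ gives $d\alfa_e(\lier^{\beta+})\subset V_+$; the eigenvalues $\la<0$ give $d\alfa_e(\lier^{\beta-})\subset V_-$; and $\la=0$, i.e. $\xi\in\lieg^\beta=V_0(\ad\beta)$, gives $d\alfa_e(\lieg^\beta)\subset V_0$.

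Finally, if $X$ is $G$-homogeneous then $d\alfa_e\colon\lieg\to T_xX$ is onto. Since $\lieg=\lier^{\beta-}\oplus\lieg^\beta\oplus\lier^{\beta+}$ as $\ad\beta$-eigenspaces, $T_xX$ is the sum of the three subspaces $d\alfa_e(\lier^{\beta-})\subset V_-$, $d\alfa_e(\lieg^\beta)\subset V_0$ and $d\alfa_e(\lier^{\beta+})\subset V_+$; because $T_xX=V_-\oplus V_0\oplus V_+$ is a direct sum, each inclusion is forced to be an equality. I do not expect a genuine obstacle here: the only point that wants care is the bookkeeping identifying $D^2\mup^\beta(x)$, $d\beta_X(x)$ and the infinitesimal generator of the linearized flow $d\phi_t(x)$ as one and the same operator — this is exactly where the Riemannian metric and the hypothesis $x\in\Crit(\mup^\beta)$ enter — after which everything reduces to an eigenvalue comparison.
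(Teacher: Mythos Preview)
Your argument is correct and matches the paper's approach. The paper cites the identity $D^2\mup^\beta(x)=d\beta_X(x)$ to \cite[Prop.~2.5]{heinzner-schwarz-stoetzel} rather than reproving it, and it phrases your conjugation computation as the abstract statement that $d\alfa_e$ is $G_x$-equivariant (hence $\R$-equivariant for the $\exp(t\beta)$-action, so $d\alfa_e\circ\ad\beta = D^2\mup^\beta(x)\circ d\alfa_e$); your explicit formula $d\phi_t(x)\,d\alfa_e(\xi)=d\alfa_e(e^{t\,\ad\beta}\xi)$ is exactly this intertwining written out, and the homogeneous case is handled identically.
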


\begin{proof}
  The first statement is proved in
  \cite[Prop. 2.5]{heinzner-schwarz-stoetzel}.  Denote by $\rho : G_x
  \ra T_xX$ the isotropy representation: $\rho(g) = dg_x$.  Observe
  that $\alfa$ is $G_x$-equivariant where $G_x$ acts on $G$ by
  conjugation, hence $d\alfa_e$ is $G_x$-equivariant, where $G_x$ acts
  on $\lieg$ by the adjoint representation and on $T_xX$ by the
  isotropy representation.  Since $\beta_X(x)=0$, $\text{exp} ({t\beta }) \in
  G_x$ for any $t$ and $d\alfa_e$ is $\R$-equivariant. Therefore it
  interchanges the infinitesimal generators of the $\R$-actions,
  i.e. $d\alfa_e \circ \ad \beta = d\beta_X = D^2\mupb(x)$.  The
  required inclusions follow. If $G$ acts transitively on $X$ we must
  have $T_xX = d\alfa_e(\lieg)$. Hence the three inclusions must be
  equalities.
\end{proof}
\begin{corollary}
  \label{MorseBott}
  For every $\beta \in \liep$, $\mupb$ is a Morse-Bott function.
\end{corollary}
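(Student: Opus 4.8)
The plan is to show that $\Crit(\mupb)$ is a smooth embedded submanifold of $X$ and that $T_x\Crit(\mupb)=\ker D^2\mupb(x)$ for every $x\in\Crit(\mupb)$; in view of the orthogonal decomposition \eqref{Dec-tangente} this is exactly the Morse--Bott condition. The starting point is that, since $\operatorname{grad}\mupb=\beta_X$, we have $\Crit(\mupb)=\Zero(\beta_X)$, and $\beta_X(x)=0$ precisely when $x$ is fixed by the flow $\{\phi_t=\exp(t\beta)\}$; thus $\Crit(\mupb)$ is the fixed--point set of the $\R$-action on $X$ generated by $\beta$.

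First I would localize at an arbitrary $x\in\Crit(\mupb)$ by invoking Corollary \ref{slice-cor-2}: there are $\R$-invariant open neighbourhoods $S\subset T_xX$ of $0$ and $\Omega\subset X$ of $x$ and an $\R$-equivariant diffeomorphism $\Psi:S\to\Omega$ with $\Psi(0)=x$, where $t\in\R$ acts on $S$ linearly as $d\phi_t(x)=\exp(t\,d\beta_X(x))$ and on $\Omega$ as $\phi_t$. Since $\Psi$ is equivariant it carries the fixed locus of the linear action onto the fixed locus of the action on $\Omega$, i.e. $\Psi(S^{\R})=\Omega\cap\Crit(\mupb)$. By Proposition \ref{tangent} the operator $d\beta_X(x)=D^2\mupb(x)$ is symmetric, so $\exp(t\,d\beta_X(x))v=v$ for all $t$ if and only if $d\beta_X(x)v=0$; hence the fixed locus of the linear $\R$-action on $T_xX$ is exactly $\ker d\beta_X(x)=V_0$, and $S^{\R}=S\cap V_0$. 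Therefore $\Crit(\mupb)\cap\Omega=\Psi(S\cap V_0)$ is an embedded submanifold of dimension $\dim V_0$.

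To finish I would check the tangent spaces. For any submanifold $C\subseteq\Crit(\mupb)$ one always has $T_xC\subseteq\ker D^2\mupb(x)$ (differentiate, along $C$, the fact that $d\mupb$ vanishes on $C$), and we have just shown $\dim T_x\Crit(\mupb)=\dim V_0=\dim\ker D^2\mupb(x)$, so the two spaces coincide. As $x$ was arbitrary, $\Crit(\mupb)$ is a smooth embedded submanifold of $X$ (its connected components possibly of different dimensions) whose tangent space at every point equals the kernel of the Hessian, and by \eqref{Dec-tangente} the Hessian is nondegenerate transverse to it; hence $\mupb$ is Morse--Bott. I do not expect a genuine obstacle here: the argument is essentially a repackaging of Corollary \ref{slice-cor-2} and Proposition \ref{tangent}, and the only point requiring a one--line justification is the identification of the fixed locus of the linearized flow with $\ker D^2\mupb(x)$, which is immediate from the symmetry (equivalently, real diagonalizability) of the Hessian.
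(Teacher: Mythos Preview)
Your proof is correct and follows essentially the same approach as the paper: both use Corollary~\ref{slice-cor-2} to show that $X^\beta=\Crit(\mupb)$ is a smooth submanifold with tangent space $V_0$, and then invoke Proposition~\ref{tangent} (the identification $D^2\mupb(x)=d\beta_X(x)$) to conclude nondegeneracy in the normal directions. Your write-up simply makes explicit the steps the paper leaves implicit, namely why the fixed locus of the linearized $\R$-action equals $V_0$ and why $T_x\Crit(\mupb)=V_0$.
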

\begin{proof}
  Let $X^\beta:=\{ x\in X: \beta_X(x) =0\}$.  Corollary
  \ref{slice-cor-2} implies that $X^\beta$ is \changed{a smooth
    submanifold}. Since $T_xX^\beta = V_0$ for $x\in X^\beta$, the
  first statement of Proposition \ref{tangent} shows that the Hessian
  is nondegenerate in the normal directions.
\end{proof}
From now on, we assume that $X$ is compact and connected.
Let $\mup:X \lra \liep$ the $G$ gradient map. Let $\beta \in \liep$.
Let $c_1 > \cds > c_r$ be the critical
  values of $\mup^\beta$. The corresponding level sets of $\mup^\beta$, $C_i:=(\mup^\beta)\meno ( c_i)$ are submanifolds which are union of
  components of $\Crit(\mup^\beta)$. The function $\mup^\beta$ defines a gradient flow generated by its gradient which is given by $\beta_X$.
By Corollary \ref{slice-cor-2}, it follows that for any $x\in X$ the limit:
  \begin{gather*}
    \phi_\infty (x) : = \lim_{t\to +\infty }
    \exp(t\beta ) x,
  \end{gather*}
  exists.
Let us denote by $W_i^\beta$ the \emph{unstable manifold} of the critical component $C_i$
  for the gradient flow of $\mup^\beta$:
  \begin{gather}
    \label{def-wu}
    W_i^\beta := \{ x\in X: \phi_\infty (x) \in C_i \}.
  \end{gather}
Applying Corollary \ref{MorseBott}, we have the following well-known decomposition of $X$ into unstable manifolds with respect to $\mup^\beta$.
\begin{theorem}\label{decomposition}
In the above assumption, we have
\begin{gather}
    \label{scompstabile}
    X = \bigsqcup_{i=1}^r W_i^\beta,
  \end{gather}
and for any $i$ the map:
  \begin{gather*}
    (\phi_\infty)\restr{W_i} : W_i^\beta \ra C_i,
  \end{gather*}
  is a smooth fibration with fibres diffeomorphic to $\R^{l_i}$ where
  ${l_i}$ is the index (of negativity) of the critical submanifold
  $C_i$
\end{theorem}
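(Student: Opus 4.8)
The plan is to deduce the decomposition from the general Morse-Bott theory for the flow $\phi_t=\exp(t\beta)$ acting on the compact manifold $X$, using the structural results already established in the excerpt. First I would record that, by Corollary \ref{MorseBott}, the function $\mu_\mathfrak p^\beta$ is Morse-Bott on the compact connected manifold $X$, so $\Crit(\mu_\mathfrak p^\beta)$ is a disjoint union of closed connected submanifolds, and by Corollary \ref{slice-cor-2} the fixed point set $X^\beta=\{x:\beta_X(x)=0\}$ is a smooth submanifold. Since $\mu_\mathfrak p^\beta$ is constant on each connected component of $\Crit(\mu_\mathfrak p^\beta)$ and $X$ is compact, there are only finitely many critical values $c_1>\cds>c_r$, and the level sets $C_i=(\mu_\mathfrak p^\beta)^{-1}(c_i)$ are (possibly disconnected) closed submanifolds that are unions of critical components; this is the setup already fixed just before the statement.

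Next I would verify that the negative... rather, by the sign conventions here the flow of $\beta_X=\operatorname{grad}\mu_\mathfrak p^\beta$ is the \emph{positive} gradient flow, so $\mu_\mathfrak p^\beta$ is non-decreasing along $t\mapsto\phi_t(x)$ and strictly increasing unless $x$ is a fixed point (Lemma, first of the gradient-map lemmas). By Corollary \ref{slice-cor-2} the limit $\phi_\infty(x)=\lim_{t\to+\infty}\exp(t\beta)x$ exists for every $x\in X$: indeed near each fixed point the flow is linearly conjugate to $d\phi_t(x)$ on a neighbourhood $S\subset T_xX$, and compactness of $X$ together with monotonicity of $\mu_\mathfrak p^\beta$ forces the trajectory to converge to a fixed point; the limit necessarily lies in some $C_i$. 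Hence $X=\bigsqcup_{i=1}^r W_i^\beta$ with $W_i^\beta=\phi_\infty^{-1}(C_i)$, and the union is disjoint because the $C_i$ are disjoint and $\phi_\infty$ is well-defined. This gives \eqref{scompstabile}.

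For the fibration statement I would argue locally using the linear model and then globalize. Fix a critical component; by Proposition \ref{tangent} the vertical derivative $d\beta_X(x)=D^2\mu_\mathfrak p^\beta(x)$ has, at $x\in C_i$, tangent space $T_xC_i=V_0$ and the number $l_i:=\dim V_-$ of negative eigenvalues is locally constant on the connected critical component (the Hessian depends continuously on the point and is nondegenerate in the normal directions by Corollary \ref{MorseBott}), so $l_i$ is well-defined. The stable/unstable manifold theorem applied to the normally hyperbolic set $C_i$ for the flow $\phi_t$ shows that $W_i^\beta$ is an embedded submanifold and that $\phi_\infty|_{W_i^\beta}\colon W_i^\beta\to C_i$ is a locally trivial fibre bundle whose fibre is the unstable manifold of a point, diffeomorphic to $\R^{l_i}$ via the exponential-type chart provided by Corollary \ref{slice-cor-2} (in the linear model $S\subset T_xX=V_-\oplus V_0\oplus V_+$ the set of points flowing to $x$ is exactly the graph over $V_-$ of a smooth function, hence $\cong\R^{l_i}$). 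The main obstacle is this last point: one must check that the purely local normal-form statement of Corollary \ref{slice-cor-2} patches together over a possibly disconnected, non-homogeneous critical set to yield a genuine smooth fibration — i.e.\ that the Morse-Bott stratification is well-behaved globally. I would handle this by the standard argument that the unstable manifold of $C_i$ is the image of a smooth embedding of the negative normal bundle $\nu_-(C_i)\to X$ obtained by flowing, using the $\R$-equivariant charts of Corollary \ref{slice-cor-2} as bundle trivializations and invoking compactness of $C_i$ to get uniform flow-time estimates; this is where essentially all the work sits, the rest being formal consequences of material already proved in the excerpt.
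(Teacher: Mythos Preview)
Your proposal is correct and matches the paper's approach: the paper gives no proof at all, simply presenting Theorem \ref{decomposition} as the ``well-known decomposition of $X$ into unstable manifolds'' that follows once $\mu_\mathfrak{p}^\beta$ is known to be Morse--Bott (Corollary \ref{MorseBott}). Your sketch --- existence of $\phi_\infty$ from the linear model of Corollary \ref{slice-cor-2}, the disjoint decomposition $X=\bigsqcup_i W_i^\beta$, and the local trivialization of $W_i^\beta\to C_i$ via the negative normal bundle --- is exactly the standard Morse--Bott argument the paper is tacitly invoking, so there is nothing to compare.
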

\section{The Norm Square of the Gradient Map}
\label{Norm_squared}
We assume throughout that $X$ is compact and connected $G$ invariant submanifold of $(Z,\omega)$ and $G$ is connected. Let $\mup:X \lra \liep$ denote the $G$ gradient map.
Let $\parallel \cdot\parallel$ denote the norm functions associated to the $\mathrm{Ad}(K)$-invariant scalar product $\langle\cdot , \cdot\rangle$ on $\mathfrak p$.
Define the function $f : X \rightarrow \mathbb{R}$ by
\begin{equation}\label{ns}
f(x) := \frac{1}{2}\parallel\mu_\mathfrak{p}(x)\parallel^2, \qquad \text{for}\quad x \in X.
\end{equation}
In this section, the critical points of this function will be of central importance.

\begin{lemma}\label{nmg}
The gradient of $f$ is given by
\begin{equation}\label{nmge}
    \triangledown f(x) = \beta_X(x), \quad \beta := \mu_\mathfrak{p}(x)\in \mathfrak{p} \quad \text{and}\quad x\in X.
\end{equation} Hence, $x\in X$ is a critical point of $f$ if and only if $\beta_X(x) = 0.$
\end{lemma}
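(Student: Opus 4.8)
The plan is to differentiate $f=\tfrac12\|\mu_\mathfrak p\|^2$ directly, using the bilinearity of the scalar product $\langle\cdot,\cdot\rangle$ on $\mathfrak p$, and then to invoke the property $\operatorname{grad}\mu_\mathfrak p^\beta=\beta_X$ recorded in Section~\ref{subsection-gradient-moment} for a \emph{fixed} $\beta\in\mathfrak p$. Fix $x\in X$, put $\beta:=\mu_\mathfrak p(x)\in\mathfrak p$, and take an arbitrary $v\in T_xX$, say $v=\dot\gamma(0)$ for a smooth curve $\gamma$ in $X$ with $\gamma(0)=x$. Since $\langle\cdot,\cdot\rangle$ is a fixed symmetric bilinear form, the product rule gives
\[
df_x(v)=\frac{\mathrm d}{\mathrm dt}\Big\vert_{t=0}\tfrac12\langle\mu_\mathfrak p(\gamma(t)),\mu_\mathfrak p(\gamma(t))\rangle=\big\langle\mu_\mathfrak p(x),\,d\mu_\mathfrak p(x)(v)\big\rangle=\big\langle\beta,\,d\mu_\mathfrak p(x)(v)\big\rangle .
\]
The one point to be careful about is that $\beta=\mu_\mathfrak p(x)$ depends on the base point; but this causes no extra term, precisely because the two arguments of $\langle\cdot,\cdot\rangle$ enter symmetrically, which is what produces the factor $2$ that cancels the $\tfrac12$, so only the value of $\mu_\mathfrak p$ at $x$ survives.

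Next I would rewrite the right-hand side as a differential of a component function with this now-frozen $\beta$. By the definition $\mu_\mathfrak p^\beta(y)=\langle\mu_\mathfrak p(y),\beta\rangle$ one has $d\mu_\mathfrak p^\beta(x)(v)=\langle d\mu_\mathfrak p(x)(v),\beta\rangle$, hence $df_x(v)=d\mu_\mathfrak p^\beta(x)(v)$. Using $\operatorname{grad}\mu_\mathfrak p^\beta=\beta_X$, where the gradient is taken with respect to the induced Riemannian metric $(\cdot,\cdot)$ on $X$, we obtain
\[
df_x(v)=\big(\beta_X(x),\,v\big)\qquad\text{for every }v\in T_xX,
\]
which is exactly the assertion $\triangledown f(x)=\beta_X(x)$ with $\beta=\mu_\mathfrak p(x)$.

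Finally, the characterization of critical points is immediate: $x$ is a critical point of $f$ if and only if $\triangledown f(x)=0$, i.e. $\beta_X(x)=0$ with $\beta=\mu_\mathfrak p(x)$. I expect no genuine obstacle in this argument; the only thing that deserves to be written out, rather than merely asserted, is the first display together with the remark that the base-point dependence of $\beta$ contributes nothing — everything else is just the chain rule combined with the already-established formula for $\operatorname{grad}\mu_\mathfrak p^\beta$.
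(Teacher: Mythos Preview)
Your proof is correct and follows essentially the same approach as the paper: differentiate $\tfrac12\langle\mu_\mathfrak p,\mu_\mathfrak p\rangle$ along a curve to obtain $\langle d\mu_\mathfrak p(x)v,\mu_\mathfrak p(x)\rangle$, then identify this with $(\beta_X(x),v)$ for $\beta=\mu_\mathfrak p(x)$. The only difference is that you make the intermediate step $df_x(v)=d\mu_\mathfrak p^\beta(x)(v)$ and the appeal to $\operatorname{grad}\mu_\mathfrak p^\beta=\beta_X$ explicit, whereas the paper's proof compresses this into a single line.
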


\begin{proof}

Define a curve $\gamma(t)$ such that $\gamma(0) = x$ and $\gamma'(0) = v \in T_xX.$
$f(x) = \frac{1}{2}\parallel \mu_\mathfrak{p}(x)\parallel^2 = \frac{1}{2}\langle \mu_\mathfrak{p}(x), \mu_\mathfrak{p}(x)\rangle$
\begin{align*}
    df(x)v &= \frac{d}{dt}\bigg \vert_{t=0}f(\gamma(t))\\
    & = \frac{1}{2}\frac{d}{dt} \bigg \vert_{t=0}\langle \mu_\mathfrak{p}(\gamma(t)), \mu_\mathfrak{p}(\gamma(t))\rangle \\
    & = \langle d\mu_\mathfrak{p}(\gamma(t))\gamma'(t), \mu_\mathfrak{p}(\gamma(t))\rangle|_{t=0}\\
    & = \langle d\mu_\mathfrak{p}(x)v, \mu_\mathfrak{p}(x)\rangle = \langle \beta_X(x), v), \quad \beta = \mu_\mathfrak{p}(x).
\end{align*}
Hence, $\triangledown f(x) = \beta_X(x).$
\end{proof}

\begin{corollary}
  Let $x\in X$ and set $\beta := \mu_\mathfrak{p}(x).$ The following are equivalent.
 \begin{enumerate}
      \item $\beta_X(x) = 0,$
      \item $d\mu_\mathfrak{p}^\xi(x) = 0,$ $\xi \in \mathfrak{p},$
      \item $df(x) = 0.$
  \end{enumerate}
\end{corollary}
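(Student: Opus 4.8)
The plan is to prove the equivalence of the three conditions directly from the identity for the gradient of $f$ obtained in Lemma \ref{nmg} together with the characterization of $\ker d\mu_\mathfrak{p}(x)$ as $(\mathfrak p \cdot x)^\bot$. Throughout, set $\beta := \mu_\mathfrak{p}(x)$ as in the statement.

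First I would note that $(c)\Leftrightarrow(a)$ is immediate: Lemma \ref{nmg} gives $\triangledown f(x) = \beta_X(x)$ with $\beta = \mu_\mathfrak{p}(x)$, and since the gradient vanishes exactly when $df(x)=0$, we have $df(x)=0$ if and only if $\beta_X(x)=0$. Next, for $(a)\Leftrightarrow(b)$, recall from the earlier lemma that $v\in\ker d\mu_\mathfrak{p}(x)$ if and only if $\langle \xi_X(x), v\rangle = 0$ for all $\xi\in\mathfrak p$, equivalently $d\mu_\mathfrak{p}^\xi(x)(v) = \langle \xi_X(x), v\rangle = 0$; so the condition $d\mu_\mathfrak{p}^\xi(x) = 0$ for all $\xi\in\mathfrak p$ means precisely $\xi_X(x)\perp T_xX$ for all $\xi\in\mathfrak p$, which forces $\xi_X(x)=0$ for all $\xi\in\mathfrak p$, since $\xi_X(x)\in T_xX$. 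In particular, taking $\xi = \beta$ gives $(b)\Rightarrow(a)$.

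For the reverse implication $(a)\Rightarrow(b)$ — the one genuinely requiring an argument — I would use the fact that $x$ is then a critical point of $\mu_\mathfrak{p}^\beta$ and invoke Proposition \ref{tangent}: since $\beta_X(x)=0$, we have $d\alfa_e(\lieg^\beta)\subseteq V_0$ and $d\alfa_e(\lier^{\beta\pm})\subseteq V_\pm$, and moreover $D^2\mu_\mathfrak{p}^\beta(x) = d\beta_X(x)$. The cleaner route, however, is the following standard observation: if $\beta_X(x)=0$ then $\beta\in\mathfrak p$ annihilates $x$, so $\exp(t\beta)\in G_x$; hence for every $\xi\in\mathfrak p$ and every $t$, the function $\mu_\mathfrak{p}^\xi$ is constant along... — more directly, one shows $d\mu_\mathfrak{p}(x) = 0$ as follows. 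The key identity is $\langle d\mu_\mathfrak{p}(x)v,\xi\rangle = \langle \xi_X(x),v\rangle$ for all $\xi\in\mathfrak p$, $v\in T_xX$. Thus $d\mu_\mathfrak{p}(x)=0$ is equivalent to $\xi_X(x)=0$ for all $\xi\in\mathfrak p$, i.e.\ the map $\mathfrak p\to T_xX$, $\xi\mapsto\xi_X(x)$, is zero. This does not follow from $\beta_X(x)=0$ alone for a single $\beta$; so in fact I would \emph{reinterpret} statement (b): since $\beta$ in the corollary is \emph{defined} to be $\mu_\mathfrak{p}(x)$, the content of $(b)$ should be read as "$d\mu_\mathfrak{p}^\beta(x)=0$ for this particular $\beta$", in which case $(b)\Leftrightarrow(a)$ reduces to the computation $d\mu_\mathfrak{p}^\beta(x)(v) = \langle\beta_X(x),v\rangle$ — so $d\mu_\mathfrak{p}^\beta(x)=0$ iff $\beta_X(x)\perp T_xX$ iff $\beta_X(x)=0$.

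The main obstacle, and the thing I would pin down first, is the precise meaning of condition (b): whether "$\xi\in\mathfrak p$" ranges over all of $\mathfrak p$ or refers to $\xi = \beta$. If it is the former, then $(b)\Rightarrow(a)$ is trivial (set $\xi=\beta$) and $(a)\Rightarrow(b)$ genuinely needs Proposition \ref{tangent} — specifically, $\beta_X(x)=0$ makes $x$ a critical point of $\mu_\mathfrak{p}^\beta$, and one argues via the equivariance of $d\alfa_e$ that $\mathfrak p\cdot x = V_+ \oplus V_- $ lies in the image of $d\alfa_e(\lier^{\beta\pm})$... but this still would not give $\xi_X(x)=0$ for all $\xi$. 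So the only consistent reading making all three equivalent is that (b) concerns $\xi=\beta$ (or equivalently is preceded implicitly by "with $\beta=\mu_\mathfrak{p}(x)$"), and then the whole corollary is a one-line consequence of Lemma \ref{nmg} and the formula $d\mu_\mathfrak{p}^\beta(x)(v)=\langle\beta_X(x),v\rangle$ established in its proof. I would therefore write the proof as: chase the chain $df(x)=0 \Leftrightarrow \triangledown f(x)=0 \Leftrightarrow \beta_X(x)=0 \Leftrightarrow \langle\beta_X(x),v\rangle=0\ \forall v \Leftrightarrow d\mu_\mathfrak{p}^\beta(x)=0$, citing Lemma \ref{nmg} for the middle equivalences and the displayed computation $df(x)v = \langle\beta_X(x),v\rangle = d\mu_\mathfrak{p}^\beta(x)v$ for the outer ones.
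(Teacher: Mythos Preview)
Your proposal is correct, and your final one-line chain
\[
df(x)=0 \ \Longleftrightarrow\ \triangledown f(x)=0 \ \Longleftrightarrow\ \beta_X(x)=0 \ \Longleftrightarrow\ d\mu_\mathfrak{p}^\beta(x)=0
\]
is exactly what the paper intends: the paper gives no proof of this corollary at all, treating it as immediate from Lemma \ref{nmg} and the identity $d\mu_\mathfrak{p}^\beta(x)(v)=(\beta_X(x),v)$.

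Your hesitation over the meaning of (b) is entirely justified and worth recording. As printed, (b) reads ``$d\mu_\mathfrak{p}^\xi(x)=0$, $\xi\in\mathfrak p$'', but this is almost certainly a typo for $\xi=\beta$: the corollary opens by defining $\beta:=\mu_\mathfrak{p}(x)$, yet $\beta$ never reappears in the statement; and, as you observe, under the literal ``for all $\xi\in\mathfrak p$'' reading (b) would be equivalent to $\mathfrak p\cdot x=0$, which is strictly stronger than (a) and (c). The detour through Proposition \ref{tangent} is unnecessary --- your eventual reading and argument are the right ones.
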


For the remaining part of this work, we fix $\beta = \mu_\mathfrak{p}(x).$ The negative gradient flow line of $f$ through $x\in X$ is the solution of the differential equation

\[ \left\{ \begin{array}{ll}
         \dot{x}(t) = -\beta_X(x(t)), \quad t\in \mathbb{R} \\
        x(0) = x.\end{array} \right. \]

The $G$-orbits are invariant under the gradient flow.
\begin{lemma}\label{Gradient}
Let $g: \mathbb{R} \rightarrow G$ be the unique solution of the differential equation
\[ \left\{ \begin{array}{ll}
         g^{-1}\dot{g}(t) = \beta_X(x(t)) \\
        g(0) = e,\quad \text{where $e$ is the identity of $G$}.\end{array} \right. \]

Then, $$x(t) = g^{-1}(t)x$$ for all $t\in \mathbb{R}.$
\end{lemma}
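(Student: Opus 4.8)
The plan is to show that the $G$-valued curve $g(t)$ produced by the stated ODE satisfies $g(t)^{-1}x = x(t)$, where $x(t)$ is the negative gradient flow line. First I would set $y(t) := g(t)^{-1}x$ and verify that $y(0) = e^{-1}x = x = x(0)$, so the two curves share the same initial condition. Then I would differentiate $y(t)$ and show that it solves the same ODE $\dot y(t) = -\beta_X(y(t))$ (with $\beta = \mu_\mathfrak{p}(x)$ fixed as declared just before the statement); uniqueness of solutions for the flow of a smooth vector field on the compact manifold $X$ then forces $y \equiv x$, which is exactly the claim.

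The computation of $\dot y(t)$ is the heart of the matter. Write $y(t) = \sigma(g(t))$ where $\sigma : G \to X$, $\sigma(h) = h^{-1}x$. Differentiating along $t$ gives $\dot y(t) = d\sigma_{g(t)}(\dot g(t))$. The cleanest way to handle this is to note that $\sigma(h) = \tau(h^{-1})$ where $\tau(h) = hx$ is the usual orbit map, so that $\dot y(t) = -\,d\tau_{g(t)^{-1}}\big( dL_{g(t)^{-1}} \circ dR_{g(t)^{-1}}(\dot g(t))\big)$ after applying the chain rule to $h \mapsto h^{-1}$. Using the defining equation $g(t)^{-1}\dot g(t) = \beta_X(x(t))$ — i.e. $\dot g(t) = dL_{g(t)}(\xi(t))$ with $\xi(t) := \beta_X(x(t)) \in \mathfrak{g}$, thinking of $\beta_X(x(t))$ as the appropriate Lie algebra element driving the flow — one reduces $dL_{g(t)^{-1}} \circ dR_{g(t)^{-1}}(\dot g(t))$ to $\mathrm{Ad}(g(t)^{-1})\xi(t)$, and then $d\tau_{g(t)^{-1}}$ of an adjoint-transported generator is precisely the fundamental vector field evaluated at $g(t)^{-1}x = y(t)$. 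This yields $\dot y(t) = -\big(\xi(t)\big)_X(y(t))$, and since $y(t)$ will turn out to equal $x(t)$, the right-hand side is $-\beta_X(x(t)) = -\beta_X(y(t))$, matching the gradient ODE.

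The main obstacle — really the only subtle point — is bookkeeping the identification in the ODE $g^{-1}\dot g = \beta_X(x(t))$: the left side naturally lives in $\mathfrak{g}$, while $\beta_X(x(t))$ is a priori a tangent vector to $X$, so one must interpret this consistently (the intended reading is that $\dot g(t)$ is the left-invariant extension of a $\mathfrak{g}$-element whose fundamental vector field at $x(t)$ is the gradient of $f$, and here $\beta = \mu_\mathfrak{p}(x)$ is constant while $x(t)$ moves). Once this is pinned down, the equivariance identity $d\tau_h(\mathrm{Ad}(h)\zeta) = \zeta_X(hx)$ — equivalently $\sigma$-equivariance of fundamental vector fields — makes the computation routine. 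It is worth remarking explicitly that $g(t)$ exists and is defined for all $t \in \mathbb{R}$ because $x(t)$ does (the flow of $-\nabla f$ is complete on the compact manifold $X$), so the linear-type ODE for $g$ on the Lie group $G$ also has a global solution; this is implicitly what the phrase "the unique solution" in the statement relies on.
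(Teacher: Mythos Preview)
Your proposal is correct and follows essentially the same route as the paper: define $y(t)=g(t)^{-1}x$, check $y(0)=x$, compute $\dot y(t)=-\xi(t)_X(y(t))$, and invoke uniqueness of ODE solutions. The only difference is cosmetic: the paper exploits that $G$ is a closed linear group and writes the derivative in one line as $\dot y=-g^{-1}\dot g\, g^{-1}x=-\beta_X(g^{-1}x)$, whereas you unpack the same computation via the orbit map and the chain rule for inversion; your remark about the $\mathfrak g$-versus-$T_xX$ identification in the ODE (and about global existence of $g$) is well taken and is precisely the abuse of notation the paper is making.
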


\begin{proof}
Define $y : \mathbb{R} \to X$ by $$y(t) = g^{-1}(t)x.$$ Since $\dot{g^{-1}} = -g^{-1}\dot{g}g^{-1}$ and $g^{-1}\dot{g} = \beta_X(x),$ it follows that
$$\dot{y} = -g^{-1}\dot{g}g^{-1}x =  -\beta_X(g^{-1}x) = -\beta_X(y(t))$$ and
$$y(0) = (g(0))^{-1}x = e^{-1}x = x.$$

Hence $x(t) = y(t) = g^{-1}(t)x$ for all $t\in \mathbb{R}.$

\end{proof}

The proof of the following Theorem is based on the Lojasiewicz gradient inequality, which holds in general for analytic gradient flows. A proof for the case of an action of a complex reductive group is given in \cite{Salamon}.
\begin{theorem}\label{teo}
Let $x_0\in X$ and $x: \mathbb{R} \rightarrow X$ be the negative gradient flow line of $f$ through $x_0$. There exist positive constants $\alpha,$ $C,$ $\psi,$ and $\frac{1}{2} < \gamma < 1$ such that

$$x_\infty := \lim_{t \rightarrow \infty} x(t) $$
exists. Moreover, there exist a constant $T > 0$ such that for any $t > T,$
\begin{align*}
    d(x(t), x_\infty) & \leq \int_t^\infty |\dot{x}(s)|ds\\
    &\leq \frac{\alpha}{1 - \gamma}(f(x(t)) - f(x_\infty))^{1-\gamma}\\
    & \leq \frac{C}{(t - T)^\psi}.
\end{align*}
\end{theorem}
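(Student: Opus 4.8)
The plan is to establish the \L{}ojasiewicz gradient inequality for $f$ at the critical point $x_\infty$ and then derive convergence and the quantitative decay estimates in the standard way. I would first fix an arbitrary $x_0 \in X$, let $x(t)$ be the negative gradient flow line, and observe that since $X$ is compact the flow is defined for all $t \geq 0$ and its $\omega$-limit set is a nonempty connected subset of $\Crit(f)$ (the latter because $f$ is nonincreasing along the flow and $\frac{d}{dt} f(x(t)) = -|\beta_X(x(t))|^2 = -|\nabla f(x(t))|^2$ by Lemma \ref{nmg}). The real point is to produce \emph{one} accumulation point $x_\infty$ at which a \L{}ojasiewicz inequality holds, namely
\[
|f(y) - f(x_\infty)|^{\gamma} \leq \alpha \, |\nabla f(y)|
\]
for all $y$ in a neighbourhood of $x_\infty$ in $X$, with some $\alpha > 0$ and $\gamma \in (\tfrac12, 1)$.

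The key step — and the main obstacle, since $X$ is not assumed real analytic — is to prove this \L{}ojasiewicz inequality. Here I would follow the idea of \cite{Salamon}: rather than appealing to analyticity of $X$, use the \emph{local model} for $f$ near a critical point provided by the Slice Theorem. By Lemma \ref{nmg}, $\nabla f(x) = \beta_X(x)$ with $\beta = \mu_\liep(x)$; near a critical point $p$ with $\mu_\liep(p) = \beta_0$, Corollary \ref{slice-cor} gives a $G^{\beta_0}$-equivariant diffeomorphism of a neighbourhood of $p$ with $G^{\beta_0} \times^{G_p} S$. Shifting by $\beta_0$ as in the discussion after Corollary \ref{slice-cor}, one reduces to the case $\beta_0 = 0$, where the gradient map is genuinely a momentum-type map and $f$ is, in the slice coordinates, built from the (real-analytic, indeed polynomial) quadratic data of a linear $G_p$-representation together with the algebraic structure of $G/G_p$. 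On this local model the Kempf--Ness/\L{}ojasiewicz estimate for norm-squares of momentum maps of \emph{linear} actions applies, and it transfers back to $X$ because $\Psi$ is a diffeomorphism of manifolds (so comparing Riemannian metrics and the functions $f$ and $|\nabla f|$ only costs bounded constants on a relatively compact neighbourhood). Thus every critical point of $f$ has a \L{}ojasiewicz neighbourhood with exponent $\gamma \in (\tfrac12,1)$.

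Granting the \L{}ojasiewicz inequality, the convergence argument is the classical one. Pick $x_\infty$ in the $\omega$-limit set of $x(\cdot)$; it is a critical point, so it has a \L{}ojasiewicz neighbourhood $B$. Let $h(t) := (f(x(t)) - f(x_\infty))^{1-\gamma} \geq 0$, which is nonincreasing; while $x(t) \in B$ one computes
\[
-\frac{d}{dt} h(t) = (1-\gamma)(f(x(t)) - f(x_\infty))^{-\gamma}\, |\nabla f(x(t))|^2 \geq \frac{1-\gamma}{\alpha}\, |\nabla f(x(t))| = \frac{1-\gamma}{\alpha}\, |\dot x(t)|,
\]
so the arclength $\int_{t_1}^{t_2} |\dot x(s)|\,ds \leq \frac{\alpha}{1-\gamma}\big(h(t_1) - h(t_2)\big)$ whenever the trajectory stays in $B$ on $[t_1,t_2]$. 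Since $x_\infty$ is an accumulation point, the trajectory enters any neighbourhood of $x_\infty$ and, by a standard bootstrap, this finite-length bound forces the orbit to remain trapped in $B$ from some time $T$ onward; hence $\int_T^\infty |\dot x(s)|\,ds < \infty$, the limit $x_\infty = \lim_{t\to\infty} x(t)$ exists, and for $t > T$,
\[
d(x(t), x_\infty) \leq \int_t^\infty |\dot x(s)|\,ds \leq \frac{\alpha}{1-\gamma}\big(f(x(t)) - f(x_\infty)\big)^{1-\gamma}.
\]
Finally, the same differential inequality in the surviving variable $g(t) := f(x(t)) - f(x_\infty)$ reads $\dot g \leq -c\, g^{2\gamma}$ with $c = 2(1-\gamma)/\alpha^2$ and $2\gamma \in (1,2)$; integrating $g^{-2\gamma}\dot g \leq -c$ on $[T,t]$ gives $g(t) \leq C' (t-T)^{-1/(2\gamma - 1)}$, which plugged into the previous display yields the stated bound $d(x(t),x_\infty) \leq C (t-T)^{-\psi}$ with $\psi = (1-\gamma)/(2\gamma - 1) > 0$ and $C$ absorbing constants. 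The only nonroutine ingredient is the \L{}ojasiewicz inequality via the slice model; everything after that is the standard Simon/\L{}ojasiewicz trajectory argument.
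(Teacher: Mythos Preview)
Your convergence argument after the \L{}ojasiewicz inequality is the standard one and matches the paper exactly, including the explicit exponent $\psi = (1-\gamma)/(2\gamma-1)$. The difference lies entirely in how the \L{}ojasiewicz inequality itself is obtained.

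The paper's route is shorter and avoids the slice theorem altogether. The observation is that the ambient function $f : Z \to \mathbb{R}$ is locally real analytic: by the Marle--Guillemin--Sternberg normal form the moment map $\mu$ is locally real analytic on the K\"ahler manifold $Z$, hence so is $\mu_\mathfrak{p} = \pi_\mathfrak{p} \circ (i\mu)$ and therefore $f = \tfrac12\|\mu_\mathfrak{p}\|^2$. So the classical \L{}ojasiewicz inequality holds for $f$ on $Z$. The key point is then Lemma~\ref{nmg}: for $x \in X$ the gradient of $f$ with respect to the induced metric on $X$ equals $\mu_\mathfrak{p}(x)_X$, and since $X$ is $G$-stable this vector field is the restriction of $\mu_\mathfrak{p}(x)_Z = \nabla_Z f$ to $X$. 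Hence $\nabla_X f = (\nabla_Z f)|_X$ with the same norm, and the \L{}ojasiewicz inequality on $Z$ restricts verbatim to $X$ --- with no analyticity hypothesis on $X$ whatsoever.

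Your route via the slice model is in the spirit of \cite{Salamon}, but as written there is a gap at the crucial step. Corollary~\ref{slice-cor} produces only a \emph{smooth} $G^{\beta_0}$-equivariant diffeomorphism $\Psi$; it says nothing about what $\mu_\mathfrak{p} \circ \Psi$ or $f \circ \Psi$ look like. Your assertion that in slice coordinates $f$ is ``built from the (real-analytic, indeed polynomial) quadratic data of a linear $G_p$-representation'' is precisely what would need proof, and it is not a formal consequence of the slice theorem for the $G$-action on $X$. In the K\"ahler case treated in \cite{Salamon} this does hold, because one has a symplectic normal form for the moment map itself; on an arbitrary $G$-invariant real submanifold $X$ of $Z$ there is no such normal form for $\mu_\mathfrak{p}|_X$. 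Your metric-comparison remark is correct but not enough: bounded distortion under $\Psi$ lets you \emph{transfer} a \L{}ojasiewicz inequality once you have one on the model, but it does not manufacture one. The paper's trick bypasses this entirely by passing to the ambient $Z$, where analyticity comes for free, and then noting that the intrinsic and ambient gradients of $f$ coincide along $X$.
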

\begin{proof}
Let $X = Z.$ Using the Marle-Guiliemin-Sternberg local normal form, the moment map is locally real analytic.
Since  $\mu_\mathfrak{p} = \pi_\mathfrak{p}\circ i\mu,$ where $\pi_\mathfrak{p} :i\mathfrak{u} \rightarrow \mathfrak{p}$ is the orthogonal projection, it follows that $\mu_\mathfrak{p}$ is locally real analytic. This implies that $f = \frac{1}{2}\parallel \mu_\mathfrak{p} \parallel^2 :Z \rightarrow \mathbb{R}$ satisfies the Lojasiewicz gradient inequality. By Lemma \ref{nmg}, the gradient of $f:Z \rightarrow \mathbb{R}$ coincide with the gradient of $f: X \rightarrow \mathbb{R}.$ 
Hence $f|_X$ also satisfies Lojasiewicz gradient inequality: there exists constants $\delta > 0,$ $\alpha > 0,$ and $\frac{1}{2} < \gamma < 1$ such that, for every critical value $a$ of $f$ and every $x\in X,$
\begin{equation}\label{loj}
    |f(x)-a| < \delta \qquad \implies |f(x)-a|^\gamma \leq \alpha|\triangledown f(x)|.
\end{equation}

Let $x : \mathbb{R} \rightarrow X$ be a nonconstant negative gradient flow line of $f.$
$$
a = \lim_{t \to \infty}f(x(t))
$$ is a critical value of $f.$ Choose a constant $T > 0$ such that $a < f(x(t)) < a + \delta$ for $t \geq T.$ Then, for $t\geq T,$
$$
\frac{d}{dt}(f(x)-a)^{1-\gamma} = (1-\gamma)(f(x)-a)^{-\gamma}|\triangledown f(x)| \geq \frac{1-\gamma}{\alpha}|\dot{x}|.
$$

Integrating the inequality over the interval $[t, \infty)$ gives
\begin{equation}\label{equ}
\int_t^\infty|\dot{x}(s)|ds \leq \frac{\alpha}{1-\gamma}(f(x(t))-a)^{1-\gamma} \qquad \text{for} \; \; t \geq T.
\end{equation}

This shows that $$x_\infty := \lim_{t\to \infty} x(t)$$ exists and it is a critical point of $f$ and hence satisfies $\mu_\mathfrak{p}(x_\infty)_X = 0.$

Set $\xi(t) = (f(x(t))-a)^{1-2\gamma}.$
$$\dot{\xi}(t) = (2\gamma - 1)(f(x(t))-a)^{-2\gamma}|\triangledown f(x(t))|^2\geq \frac{2\gamma-1}{\alpha^2} \qquad \text{for} \; \; t\geq T.
$$ Which implies that
$$
\xi(t) \geq \frac{2\gamma-1}{\alpha^2}(t-T) \qquad \text{for} \; \; t\geq T.
$$
Hence
$$(f(x(t))-a)^{1-\gamma} = \xi(t)^{-\frac{1-\gamma}{2\gamma-1}}\leq \left(\frac{2\gamma-1}{\alpha^2}(t-T)\right)^{-\frac{1-\gamma}{2\gamma-1}} \qquad \text{for} \; \; t\geq T.
$$
Thus
$$
\frac{\alpha}{1-\gamma}(f(x(t))-a)^{1-\gamma}\leq \frac{c}{(t-T)^\psi}, \quad \psi := \frac{1-\gamma}{2\gamma-1}, \quad c := \frac{\alpha}{1-\gamma}\left(\frac{\alpha^2}{2\gamma-1}\right)^\psi
$$ and by (\ref{equ}) the result follows.

\end{proof}

\subsection{Stratifications of the Norm Square of the Gradient map.}\label{ssss}
We recall the stratification theorem for actions of reductive group. First, we define a stratification of $X.$ For details see \cite{heinzner-schwarz-stoetzel}.

Given a maximal subalgebra $\mathfrak{a}\subset \mathfrak{p},$ we pick $\mathfrak{a}_+ \subset \mathfrak{a}$ a positive Weyl-chamber. Let $f: X \to \mathbb{R}$ be the norm square of the gradient map $\mu_\mathfrak{p}.$ i.e.,
$$
f(x) := \frac{1}{2}\parallel \mu_\mathfrak{p}(x) \parallel^2,
$$ where $\parallel \cdot \parallel$ denotes the norm functions associated to an $\mathrm{Ad}(K)$-invariant scalar product $\langle\cdot , \cdot\rangle.$ Let
$C$ denote the critical set of $f$, $\mathfrak{B} := \mu_\mathfrak{p}(C)$ and $\mathfrak{B}_+ := \mathfrak{B}\cap \mathfrak{a}_+.$

Let $X^{ss} := \{x\in X : \overline{G\cdot x} \cap \mu_\mathfrak{p}^{-1}(0) \neq \emptyset \}.$ For $\beta \in \mathfrak{B}_+$, following the notation introduced in \cite{heinzner-schwarz-stoetzel},  set

\begin{align*}
    & X|_{\parallel \beta\parallel^2} := \{x\in X : \overline{\text{exp}(\mathbb{R}\beta)\cdot x} \cap (\mu_\mathfrak{p}^\beta)^{-1}(\parallel \beta \parallel^2) \neq \emptyset \}\\
    & X^\beta := \{x\in X : \beta_X(x) = 0 \}\\
   & X^\beta|_{\parallel \beta \parallel^2} := X^\beta \cap X|_{\parallel \beta\parallel^2}\\
   & X^{\beta +}|_{\parallel \beta\parallel^2} := \{x\in X|_{\parallel \beta\parallel^2}: \lim_{t\to -\infty}\text{exp}(t\beta)\cdot x \; \text{exists and it lies in} \; X^\beta|_{\parallel \beta\parallel^2}\}
\end{align*}

The set $X^{\beta +}|_{\parallel \beta\parallel^2}$ is $G^{\beta +}$-invariant. $\mu_{\mathfrak{p}^\beta}$ is a gradient map of the $G^\beta$-action on $X^{\beta +}|_{\parallel \beta\parallel^2}.$ Set $$\widehat{\mu_{\mathfrak{p}^\beta}} := \mu_{\mathfrak{p}^\beta} - \beta.$$ Since $\beta$ is in the center of $\mathfrak{g}^\beta$ and $G^\beta$ is a compatible subgroup of $(U^\beta)^\C = (U^\C)^\beta, $ it is a gradient map too. Let
$$
S^{\beta +} := \{x \in X^{\beta +}|_{\parallel \beta\parallel^2}: \overline{G^\beta \cdot x} \cap \mu_{\mathfrak{p}^\beta}^{-1}(\beta) \neq \emptyset \}.
$$ The set $S^{\beta +}$ coincides with the set of semistable points of the group $G^\beta$ in $X^{\beta +}|_{\parallel \beta\parallel^2}$ after shifting.

\begin{definition}
The $\beta$-stratum of $X$ is given by $S_\beta := G\cdot S^{\beta +}.$
\end{definition}

\begin{theorem}\label{Stratification}
(Stratification Theorem)\cite[7.3]{heinzner-schwarz-stoetzel}. Suppose $X$ is a compact $G$-invariant submanifold of $Z.$ Then $\mathfrak{B}_+$ is finite and
$$
X = \bigsqcup_{\beta \in \mathfrak{B}_+}S_\beta.
$$
Moreover
$$
\overline{S_\beta}\subset S_\beta \cup \bigcup_{|\gamma| > |\beta|}S_\gamma.
$$
\end{theorem}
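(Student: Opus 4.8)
The plan is to follow Kirwan's stratification argument \cite{Kirwan} in the form adapted to real reductive groups in \cite{heinzner-schwarz-stoetzel}, with the convergence of the negative gradient flow of $f$ (Theorem \ref{teo}) playing the role that real analyticity of the moment map plays in the K\"ahler case. First I would set up a \emph{dynamical} description of the strata. By Theorem \ref{teo}, for every $x\in X$ the limit $\phi_\infty(x):=\lim_{t\to+\infty}\phi_t(x)$ exists and is a critical point of $f$, so $\mu_\mathfrak{p}(\phi_\infty(x))\in\mathfrak{B}$; since $\overline{\mathfrak{a}_+}$ is a strict fundamental domain for the adjoint $K$-action on $\mathfrak{p}$, there is a unique $\beta(x)\in\mathfrak{B}_+$ with $\mu_\mathfrak{p}(\phi_\infty(x))\in K\cdot\beta(x)$. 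Putting $\Sigma_\beta:=\{x\in X:\beta(x)=\beta\}$ gives tautologically $X=\bigsqcup_{\beta\in\mathfrak{B}_+}\Sigma_\beta$, and since $K\subset U$ acts by isometries on $X$ the flow $\phi_t$ and the map $\phi_\infty$ are $K$-equivariant, so each $\Sigma_\beta$ is $K$-invariant. It then remains to prove that $\mathfrak{B}_+$ is finite and that $\Sigma_\beta=S_\beta$ for every $\beta$.

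For finiteness of $\mathfrak{B}_+$ I would argue as in the complex case: if $x$ is critical and $\beta:=\mu_\mathfrak{p}(x)\in\overline{\mathfrak{a}_+}$ then $\beta_X(x)=0$, so $x\in X^\beta$ and $\langle\mu_\mathfrak{p}(x),\beta\rangle=\|\beta\|^2$; passing to the torus $A=\exp(\mathfrak{a})$ and the abelian gradient map $\mu_\mathfrak{a}=\pi_\mathfrak{a}\circ\mu_\mathfrak{p}$, one identifies $\beta$ with the point of minimal norm in the image under $\mu_\mathfrak{a}$ of a connected component of one of the common zero sets $\{y:\eta_X(y)=0\ \forall\eta\in\mathfrak{a}'\}$, $\mathfrak{a}'\subseteq\mathfrak{a}$ a subspace. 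There being only finitely many combinatorial types of such data (governed by the restricted roots and by the faces of the relevant convex images), $\mathfrak{B}_+$ is finite; this is the real-reductive analogue of Kirwan's finiteness lemma.

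The core of the argument is the identification $\Sigma_\beta=S_\beta=G\cdot S^{\beta+}$. For $S^{\beta+}\subseteq\Sigma_\beta$: on the locally closed piece $X^{\beta+}|_{\|\beta\|^2}$ the negative gradient flow of $f$ stays in $X^{\beta+}|_{\|\beta\|^2}$ and coincides there with the negative gradient flow of $\tfrac12\|\widehat{\mu_{\mathfrak{p}^\beta}}\|^2$ for the compatible group $G^\beta$ (with $\widehat{\mu_{\mathfrak{p}^\beta}}=\mu_{\mathfrak{p}^\beta}-\beta$ the shifted gradient map, as in Corollary \ref{slice-cor}); applying Theorem \ref{teo} to $G^\beta$ together with the Slice Theorem, this flow converges to a point of $\mu_{\mathfrak{p}^\beta}^{-1}(\beta)$, a critical point of $f$ with $\mu_\mathfrak{p}$-image $\beta$. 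Hence $S^{\beta+}\subseteq\Sigma_\beta$, and since $\Sigma_\beta$ is $G$-invariant (directly from Lemma \ref{Gradient}) we get $S_\beta\subseteq\Sigma_\beta$. Conversely, given $x\in\Sigma_\beta$, after applying a suitable $k\in K$ one may assume $\mu_\mathfrak{p}(\phi_\infty(x))=\beta$; using the parabolic $G^{\beta+}=R^{\beta+}\rtimes G^\beta$ (Lemma \ref{parabolicc}) and the Slice Theorem at $\phi_\infty(x)$ one shows $\lim_{t\to-\infty}\exp(t\beta)\cdot x$ exists and lies in $X^\beta|_{\|\beta\|^2}$, so $x\in X^{\beta+}|_{\|\beta\|^2}$, and that $\overline{G^\beta\cdot x}$ meets $\mu_{\mathfrak{p}^\beta}^{-1}(\beta)$ (again by convergence of the $G^\beta$-flow). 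Thus $x\in S^{\beta+}$ and $\Sigma_\beta\subseteq G\cdot S^{\beta+}=S_\beta$. For the closure relation I would use that $f$ is non-increasing along $\phi_t$, so $f\circ\phi_\infty=\inf_{t\ge0}f\circ\phi_t$ is upper semicontinuous and takes the finite value set $\{\tfrac12\|\gamma\|^2:\gamma\in\mathfrak{B}_+\}$, being identically $\tfrac12\|\beta\|^2$ on $S_\beta$; upper semicontinuity plus finiteness show $\{f\circ\phi_\infty\le\tfrac12\|\beta\|^2\}=\bigsqcup_{\|\gamma\|\le\|\beta\|}S_\gamma$ is open, so $\overline{S_\beta}$ misses every $S_\gamma$ with $\|\gamma\|<\|\beta\|$, while separation from strata of equal norm with $\gamma\ne\beta$ follows from the local normal form near the critical set (Corollary \ref{slice-cor-2}) and the unstable-manifold fibration of Theorem \ref{decomposition} for $\mu_\mathfrak{p}^\beta$.

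I expect the main obstacle to be the identification $\Sigma_\beta=S_\beta$: matching the dynamical description (gradient flow of $f$) with the algebraic/parabolic description (shifted semistability for $G^\beta$ inside $X^{\beta+}|_{\|\beta\|^2}$) requires handling the Slice Theorem on the non-open, $G^{\beta+}$-invariant piece $X^{\beta+}|_{\|\beta\|^2}$ and checking that the restricted flow there is again the gradient flow of a norm-square, so that Theorem \ref{teo} (the Lojasiewicz argument) applies at that level as well; everything else is then bookkeeping with the decomposition $\mathfrak{g}^\beta=\mathfrak{k}^\beta\oplus\mathfrak{p}^\beta$ and upper semicontinuity.
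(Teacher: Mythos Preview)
The paper does not give its own proof of this statement: Theorem \ref{Stratification} is simply quoted from \cite[7.3]{heinzner-schwarz-stoetzel} and used as a black box, so there is no argument in the paper to compare your proposal against.

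For what it is worth, your sketch is essentially the strategy of \cite{heinzner-schwarz-stoetzel} (adapting Kirwan \cite{Kirwan}), with the Lojasiewicz convergence of Theorem \ref{teo} standing in for analyticity. One step to tighten: you assert that on $X^{\beta+}|_{\|\beta\|^2}$ the negative gradient flow of $f$ \emph{coincides} with that of $\tfrac12\|\widehat{\mu_{\mathfrak{p}^\beta}}\|^2$, but the vector fields $\mu_\mathfrak{p}(x)_X$ and $(\mu_{\mathfrak{p}^\beta}(x)-\beta)_X$ differ off $X^\beta$ (by the component of $\mu_\mathfrak{p}(x)$ in $\mathfrak{p}\ominus\mathfrak{p}^\beta$ and by the shift term $\beta_X$); in \cite{heinzner-schwarz-stoetzel} the identification of the dynamical and parabolic strata is obtained not by matching these flows but via the retraction $X^{\beta+}\to X^\beta$, $x\mapsto\lim_{t\to-\infty}\exp(t\beta)\cdot x$, together with the Hessian information of Proposition \ref{Hessian comp}. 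Also, the $G$-invariance of your $\Sigma_\beta$ really uses that the $K$-orbit of $\phi_\infty(x)$ depends only on $G\cdot x$, which in this paper is Theorem \ref{MLT} and appears only after the Stratification Theorem is stated. Your finiteness and closure arguments are the standard ones.
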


\begin{proposition}\cite[6.12]{heinzner-schwarz-stoetzel} \label{lemm}
If $z\in X$ satisfies $$f(z) = max_{x\in X}f(x).$$ Then $G\cdot z = K\cdot z$ and so it is closed orbit.
\end{proposition}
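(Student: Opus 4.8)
The plan is to exploit the gradient flow together with the Morse-Bott structure of $\mupb$ for the single value $\beta := \mu_\liep(z)$. First I would observe that since $z$ maximizes $f$, it is in particular a critical point of $f$, so by Lemma \ref{nmg} we have $\beta_X(z) = 0$, i.e. $z \in X^\beta$. Thus $z$ is a critical point of $\mupb$ as well, and $\mupb(z) = \langle \mu_\liep(z), \beta\rangle = \|\beta\|^2 = 2f(z)$. The key point to extract is that $z$ realizes the \emph{maximum} value of $\mupb$ on $X$: for any $x \in X$, Cauchy-Schwarz gives $\mupb(x) = \langle \mu_\liep(x), \beta\rangle \le \|\mu_\liep(x)\|\,\|\beta\| \le \|\beta\|^2 = \mupb(z)$, using $\|\mu_\liep(x)\|^2 = 2f(x) \le 2f(z) = \|\beta\|^2$. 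Hence $z$ lies in the top critical level $C_1$ of $\mupb$ in the notation of Theorem \ref{decomposition}.

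Next I would use Proposition \ref{tangent} and Corollary \ref{MorseBott} at the point $z$. Since $z$ is a critical point of $\mupb$ at which $\mupb$ attains its global maximum, the Hessian $D^2\mupb(z) = d\beta_X(z)$ is negative semidefinite, so $V_+ = 0$ in the decomposition \eqref{Dec-tangente}. By Proposition \ref{tangent}, $d\alfa_e(\lier^{\beta+}) \subset V_+ = 0$, which forces $\lier^{\beta+} \cdot z = 0$; and since $d\alfa_e(\lier^{\beta-}) \subset V_-$ with $d\alfa_e(\lieg^\beta) \subset V_0$, combined with $\lieg^{\beta+} = \lieg^\beta \oplus \lier^{\beta+}$ and the full Cartan-type decomposition $\lieg = \lier^{\beta-} \oplus \lieg^\beta \oplus \lier^{\beta+}$, I would deduce that $\lieg\cdot z = \lier^{\beta-}\cdot z \oplus \lieg^\beta \cdot z$. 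The plan is then to show that $\lieg^\beta\cdot z = \liek^\beta \cdot z$ by a shifting/induction argument: $z$ is a zero of the shifted $G^\beta$-gradient map $\widehat{\mu_{\liep^\beta}} = \mu_{\liep^\beta} - \beta$ (since $\mu_{\liep^\beta}(z)$ is the orthogonal projection of $\mu_\liep(z) = \beta$ to $\liep^\beta$, which is $\beta$ itself as $\beta$ centralizes $\beta$), so by the Slice Theorem (Corollary \ref{slice-cor} applied to $G^\beta$, or directly the $G$-version since $\widehat{\mu_{\liep^\beta}}(z)=0$) the orbit $G^\beta\cdot z$ near $z$ is modeled on $G^\beta \times^{G_z} S$, and the standard argument that a zero of a gradient map has $G^\beta\cdot z = K^\beta \cdot z$ closed applies.

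Alternatively — and this is probably the cleaner route I would actually write — I would argue directly via the negative gradient flow of $f$. By Theorem \ref{teo} every negative gradient flow line converges; but $z$ being a global maximum of $f$ means the \emph{positive} gradient flow is trivial at $z$ and, more usefully, I would run the flow argument on nearby points of the orbit $G\cdot z$: for $g \in G^{\beta+}$, the curve $t \mapsto \exp(t\beta) g z$ has $\mupb$ increasing (by the first Lemma of Section \ref{subsection-gradient-moment}) and bounded above by $\|\beta\|^2 = \mupb(z)$, so it converges; the limit is a critical point of $\mupb$ in the top level, and a Slice/normal-form computation shows the flow actually contracts $R^{\beta+}\cdot z$ to $z$. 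Writing $g = k\exp(\xi)$ in the Cartan decomposition and using $\lieg = \liek + \liep$ together with $G = R^{\beta+} G^\beta$ type parabolic factorizations, one reduces an arbitrary point of $G\cdot z$ to a point of $G^\beta \cdot z$, and then $\beta$ being central in $\lieg^\beta$ with $z$ a zero of the shifted gradient map gives $G^\beta \cdot z = K^\beta \cdot z$.

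The main obstacle I anticipate is the bookkeeping in the reduction from $G\cdot z$ to $G^\beta\cdot z$: one must argue that moving along $R^{\beta+}$ (the unstable/stable nilpotent directions) can be absorbed by the $\exp(\R\beta)$-flow without leaving the orbit and without changing the $K$-orbit — this is exactly where compactness of $X$, the convergence of the flow (Theorem \ref{teo}), and the equivariance of the local normal form (Corollary \ref{slice-cor}) are all needed simultaneously. Once that reduction is in place, the residual statement about $G^\beta$ is the base case of the classical Kempf–Ness / Ness-type result for the zero level set of a gradient map, which I would either cite from \cite{heinzner-schwarz-stoetzel} or prove in the short form: if $\mu_{\liep^\beta}^{\,\mathrm{shifted}}(z) = 0$ then $G^\beta \cdot z$ is closed and meets $K^\beta\cdot z$, and closedness plus $\mu_{\liep^\beta}$ being constant on $K^\beta\cdot z$ forces equality.
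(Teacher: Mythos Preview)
The paper does not give its own proof of this proposition; it is simply quoted from \cite[6.12]{heinzner-schwarz-stoetzel}. So there is nothing to compare against, and I can only assess your argument on its own merits.

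Your Cauchy--Schwarz step is correct and elegant: from $\|\mu_\liep(x)\|\leq\|\beta\|$ for all $x$ one indeed gets $\mupb(x)\leq\|\beta\|^2=\mupb(z)$, so $z$ is a global maximum of $\mupb$, the Hessian $D^2\mupb(z)$ is negative semidefinite, hence $V_+=0$, and Proposition~\ref{tangent} gives $\lier^{\beta+}\cdot z=0$. Combined with $G=KG^{\beta+}=KG^\beta R^{\beta+}$, this reduces the problem to showing $G^\beta\cdot z=K^\beta\cdot z$.

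The gap is precisely in that last step. You assert that ``the standard argument that a zero of a gradient map has $G^\beta\cdot z = K^\beta\cdot z$ closed applies.'' This is false: a point with $\widehat{\mu_{\liep^\beta}}(z)=0$ is merely a point in the zero level of the shifted gradient map, and such points need not have $G^\beta$-orbit equal to their $K^\beta$-orbit. (Take $G=\C^*$ acting on $\C^2$ with weights $(1,-1)$: the point $(1,1)$ has moment map zero, but its $\C^*$-orbit is two-real-dimensional while its $U(1)$-orbit is a circle.) The Slice Theorem gives you a local model but does not by itself force $\liep^\beta\cdot z=0$.

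The clean fix is already sitting in the paper: Proposition~\ref{Hessian comp}(b) says that at any critical point of $f$ the Hessian $H_z(f)$ is \emph{strictly positive} on $\liep^\beta\cdot z+\lier^{\beta+}\cdot z$. Since $z$ is a global maximum of $f$, $H_z(f)\leq 0$, so this subspace must vanish; in particular $\liep^\beta\cdot z=0$. Then for $g=k\exp(\xi)\in G^\beta$ with $\xi\in\liep^\beta$ one has $gz=kz$, giving $G^\beta\cdot z=K^\beta\cdot z$ and hence $G\cdot z=KG^\beta\cdot z=K\cdot z$. Your ``alternative route'' via the flow is more circuitous and, as you yourself note, the bookkeeping to absorb the $R^{\beta-}$ directions is not supplied; the Hessian argument avoids all of that.
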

If $v\in T_p X$, then $|v|=\sqrt{(v,v)}$, where $(\cdot,\cdot)$ is the scalar product induced by the K\"ahler form $\omega$.
The following proposition give the Hessian of $f.$
\begin{proposition}\label{Hessian}
\cite[Prop. 2.5, 2]{heinzner-schwarz-stoetzel}. Let $v\in T_xX$ be an eigenvector of $\beta \in \mathfrak{p}_x$ with eigenvalue $\lambda(\beta).$ Let $\gamma(t)$ be a smooth curve in $X$ with $\gamma(0) = x$ and $\dot{\gamma}(0) = v.$ Then if $x$ is a critical point of $f,$
\begin{equation}\label{Hess}
\frac{d^2}{dt^2}(f\circ\gamma)(0) = \lambda(\beta)|v|^2 + \parallel d\mu_\mathfrak{p}(x)v \parallel^2
\end{equation}
\end{proposition}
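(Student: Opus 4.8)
The plan is to differentiate $f\circ\gamma$ twice directly from the definition $f=\frac12\langle\mu_\mathfrak{p},\mu_\mathfrak{p}\rangle$ and then to recognize the two terms that appear. Exactly as in the proof of Lemma~\ref{nmg}, one differentiation gives
\[
\frac{\mathrm d}{\mathrm dt}(f\circ\gamma)(t)=\big\langle d\mu_\mathfrak{p}(\gamma(t))\dot\gamma(t),\,\mu_\mathfrak{p}(\gamma(t))\big\rangle .
\]
Differentiating once more and setting $t=0$ yields, by the Leibniz rule, a sum of two contributions: a ``mixed'' term where the derivative hits the first factor, and the term where it hits the second. Writing $\beta:=\mu_\mathfrak{p}(x)$, the second contribution is $\langle d\mu_\mathfrak{p}(x)v,\,d\mu_\mathfrak{p}(x)v\rangle=\|d\mu_\mathfrak{p}(x)v\|^2$, which is precisely the second summand in the claimed identity. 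So the whole problem reduces to identifying the mixed term
\[
T:=\frac{\mathrm d}{\mathrm dt}\Big|_{t=0}\big\langle d\mu_\mathfrak{p}(\gamma(t))\dot\gamma(t),\,\beta\big\rangle
\]
with $\lambda(\beta)\,|v|^2$.

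For this I would first use that $\beta$ is a \emph{constant} vector (it does not depend on $t$), so it can be moved in and out of the $t$-derivative, together with the defining property $\operatorname{grad}\mu_\mathfrak{p}^\beta=\beta_X$ of the gradient map. This rewrites $\langle d\mu_\mathfrak{p}(\gamma(t))\dot\gamma(t),\beta\rangle=d\mu_\mathfrak{p}^\beta(\gamma(t))(\dot\gamma(t))=(\beta_X(\gamma(t)),\dot\gamma(t))$, whence
\[
T=\frac{\mathrm d}{\mathrm dt}\Big|_{t=0}\big(\beta_X(\gamma(t)),\dot\gamma(t)\big)=\frac{\mathrm d^2}{\mathrm dt^2}\Big|_{t=0}\mu_\mathfrak{p}^\beta(\gamma(t)).
\]
Since $x$ is a critical point of $f$, Lemma~\ref{nmg} gives $\beta_X(x)=0$, i.e. $x\in\Crit(\mu_\mathfrak{p}^\beta)$; hence the last expression is exactly the value $(D^2\mu_\mathfrak{p}^\beta(x)v,v)$ of the Hessian in the sense defined right before Proposition~\ref{tangent}, and in particular it does not depend on the chosen curve $\gamma$.

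It then remains to invoke Proposition~\ref{tangent}, which identifies the Hessian operator $D^2\mu_\mathfrak{p}^\beta(x)$ with the vertical part $d\beta_X(x)$ of the differential of $\beta_X$ at its zero $x$. Since by hypothesis $v$ is an eigenvector of this symmetric operator with eigenvalue $\lambda(\beta)$, we obtain $T=(d\beta_X(x)v,v)=\lambda(\beta)(v,v)=\lambda(\beta)|v|^2$, and adding the two terms gives the stated formula. The only delicate point is really the bookkeeping in the second differentiation — making sure the mixed term is genuinely linear in the fixed vector $\beta$ so that $\beta$ may be pulled outside the $t$-derivative, and that no extra term survives precisely because $\beta_X(x)=0$; everything past that is a direct application of Lemma~\ref{nmg} and Proposition~\ref{tangent} (equivalently, one may simply cite \cite[Prop.~2.5]{heinzner-schwarz-stoetzel} for $D^2\mu_\mathfrak{p}^\beta(x)=d\beta_X(x)$ and assemble the pieces).
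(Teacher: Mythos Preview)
Your argument is correct. The paper does not give its own proof of this proposition; it is quoted verbatim from \cite[Prop.~2.5]{heinzner-schwarz-stoetzel} and simply invoked later. What you have written is essentially the standard derivation behind that cited result: differentiate $f=\tfrac12\langle\mu_\mathfrak{p},\mu_\mathfrak{p}\rangle$ twice, use the Leibniz rule, identify the ``diagonal'' term as $\|d\mu_\mathfrak{p}(x)v\|^2$, and for the mixed term use that $\beta=\mu_\mathfrak{p}(x)$ is constant to reduce it to the Hessian of $\mu_\mathfrak{p}^\beta$ at the critical point $x$, which by Proposition~\ref{tangent} (equivalently \cite[Prop.~2.5]{heinzner-schwarz-stoetzel}) is $d\beta_X(x)$ and hence returns $\lambda(\beta)|v|^2$ on the eigenvector $v$. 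Nothing is missing; the only caveat is interpretive: ``eigenvector of $\beta\in\mathfrak{p}_x$'' in the statement means eigenvector of the linearization $d\beta_X(x)\in\End(T_xX)$, which is exactly how you read it.
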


The Hessian of $f$ at critical points satisfies the following:

\begin{proposition}\label{Hessian comp}
\cite[Prop. 6.6]{heinzner-schwarz-stoetzel}
Let $x\in C$ be a critical point of $f : X \to \mathbb{R}$ and let $S_\beta$ be the associated stratum. Let $H_x(f)$ denote the Hessian of $f$ at $x.$ Then
\begin{enumerate}
    \item $H_x(f) = 0$ on $T_x(K\cdot x),$
    \item $H_x(f) > 0$ on $\mathfrak{p}^\beta\cdot x + \lier^{\beta+}\cdot x,$ where $\lier^{\beta+}$ is the Lie algebra of $R^{\beta+},$
    \item $H_x(f)\geq 0$ on $T_x(S_\beta) = \lieg\cdot x + T_x(S^{\beta+}) = \liek\cdot x + T_x(S^{\beta+})$ and
    \item $H_x(f) < 0$ on $T_x(S_\beta)^\bot = (\lieg\cdot x)^\bot \cap (T_x(S^{\beta+}))^\bot = (\liek\cdot x)^\bot \cap (T_x(S^{\beta+}))^\bot.$
\end{enumerate}
\end{proposition}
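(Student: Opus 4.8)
The plan is to read everything off the covariant Hessian of $f=\frac{1}{2}\|\mup\|^2$. A direct computation at a critical point $x$ (where $\beta:=\mup(x)$ satisfies $\beta_X(x)=0$), refining Proposition \ref{Hessian} from eigenvectors to all tangent vectors, gives the symmetric bilinear form
\[
H_x(f)(v,w)=\bigl(D^2\mupb(x)v,\,w\bigr)+\langle d\mup(x)v,\, d\mup(x)w\rangle ,
\]
so $H_x(f)$ is the sum of the symmetric operator $D^2\mupb(x)=d\beta_X(x)$ of Proposition \ref{tangent} and the positive semidefinite form $v\mapsto\|d\mup(x)v\|^2$, whose radical is $(\liep\cd x)^\bot=\ker d\mup(x)$. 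Throughout one uses the decomposition $T_xX=V_-\oplus V_0\oplus V_+$ of (\ref{Dec-tangente}) together with the inclusions $d\alfa_e(\lier^{\beta\pm})\subseteq V_\pm$ and $d\alfa_e(\lieg^\beta)\subseteq V_0$ from Proposition \ref{tangent}.

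Item (a) is immediate: since $\mup$ is $K$-equivariant and $\|\cdot\|$ is $\Ad(K)$-invariant, $f$ is $K$-invariant, so $\triangledown f$ is $K$-equivariant and vanishes identically on $K\cd x$; differentiating $\triangledown f$ along $\liek\cd x$ then shows $\liek\cd x$ lies in the radical of the bilinear form $H_x(f)$, which also lets one discard $\liek\cd x$-components later. For item (b) one argues directly: if $\eta\in\liep^\beta$ then $\ad\beta(\eta)=0$, so $\eta_X(x)\in V_0$ and $H_x(f)(\eta_X(x),\eta_X(x))=\|d\mup(x)\eta_X(x)\|^2$, which vanishes only if $\eta_X(x)\in(\liep\cd x)^\bot\cap(\liep\cd x)=0$; if $\xi\in\lier^{\beta+}$ then $\xi_X(x)\in V_+$, so $(D^2\mupb(x)\xi_X(x),\xi_X(x))>0$ unless $\xi_X(x)=0$; and for $v=\eta_X(x)+\xi_X(x)$ the cross term of $(D^2\mupb(x)v,v)$ vanishes because $D^2\mupb(x)\eta_X(x)=0$, leaving
\[
H_x(f)(v,v)=\|d\mup(x)v\|^2+\bigl(D^2\mupb(x)\xi_X(x),\xi_X(x)\bigr),
\]
a sum of nonnegative terms whose vanishing forces $\xi_X(x)=0$ and then $\eta_X(x)=0$. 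Hence $H_x(f)>0$ on $\liep^\beta\cd x+\lier^{\beta+}\cd x$.

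For (c) and (d) one invokes the local structure of the stratum $S_\beta$ from \cite{heinzner-schwarz-stoetzel}. First, since $\theta$ interchanges $\lier^{\beta+}$ and $\lier^{\beta-}$, each $\xi\in\lier^{\beta-}$ equals $(\xi+\theta\xi)-\theta\xi$ with $\xi+\theta\xi\in\liek$ and $\theta\xi\in\lier^{\beta+}$; combined with $\lieg^\beta\cd x\subseteq T_xS^{\beta+}$ and $\lier^{\beta+}\cd x\subseteq T_xS^{\beta+}$ (the slice $S^{\beta+}$ being $G^{\beta+}$-invariant near $x$) this yields $\lieg\cd x\subseteq\liek\cd x+T_xS^{\beta+}$, whence the stated identities $T_xS_\beta=\lieg\cd x+T_xS^{\beta+}=\liek\cd x+T_xS^{\beta+}$ (the first from the tube description $S_\beta\cong G\times^{G^{\beta+}}S^{\beta+}$). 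As $H_x(f)$ kills $\liek\cd x$, (c) reduces to $H_x(f)\ge 0$ on $T_xS^{\beta+}$; here one uses that $\mup(x)=\beta$ forces $\mu_{\liep^\beta}(x)=\beta$, i.e. $x$ is in the zero fibre of the shifted $G^\beta$-gradient map $\widehat{\mu_{\liep^\beta}}=\mu_{\liep^\beta}-\beta$, so applying the case $\beta=0$ of the Hessian formula to the $G^\beta$-action on $S^{\beta+}$ (where the $D^2$-term drops out) shows the Hessian of $\frac{1}{2}\|\widehat{\mu_{\liep^\beta}}\|^2$ is positive semidefinite there, and this coincides with $H_x(f)$ on $T_xS^{\beta+}$ off the directions $\liep^\beta\cd x$ and $\lier^{\beta+}\cd x$ already handled in (b). For (d), note $(T_xS_\beta)^\bot\subseteq(\lieg\cd x)^\bot\subseteq(\liep\cd x)^\bot=\ker d\mup(x)$, so on $(T_xS_\beta)^\bot$ the formula collapses to $H_x(f)=D^2\mupb(x)$; it then suffices to show $(T_xS_\beta)^\bot\subseteq V_-$, equivalently $V_0\oplus V_+\subseteq T_xS_\beta$, after which strict negativity follows since $D^2\mupb(x)$ is negative definite on $V_-$.

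The main obstacle is the last step of (c) and the inclusion $V_0\oplus V_+\subseteq T_xS_\beta$ in (d): both rest on the precise local (tube) model of $S_\beta$ at a critical point and on the identification of $S^{\beta+}$ with a shifted semistable set for $G^\beta$, i.e. on the heart of the Heinzner--Schwarz--Stoetzel stratification, which I would quote rather than reprove. A priori the positive term $\|d\mup(x)v\|^2$ could offset the negative eigenvalues of $D^2\mupb(x)$ and destroy the sign; what saves the argument is that the remaining problematic directions are transverse to $\lieg\cd x$ --- indeed to the slice $S^{\beta+}$ --- and there $d\mup(x)$ vanishes, so $H_x(f)$ is exactly $D^2\mupb(x)$. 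Granting this input, the rest is bookkeeping with the displayed Hessian formula and the decomposition $T_xX=V_-\oplus V_0\oplus V_+$.
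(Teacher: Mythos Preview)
The paper does not give its own proof of this proposition: it is stated with the citation \cite[Prop.~6.6]{heinzner-schwarz-stoetzel} and used as a black box, so there is nothing in the paper to compare your argument against.

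That said, your sketch is essentially the argument one finds in Heinzner--Schwarz--St\"otzel. The Hessian identity
\[
H_x(f)(v,v)=\bigl(D^2\mupb(x)v,v\bigr)+\|d\mup(x)v\|^2
\]
is exactly the extension of Proposition~\ref{Hessian} to arbitrary tangent vectors, and your treatment of (a) and (b) is clean and correct: $K$-invariance of $f$ handles (a), and for (b) the splitting $\liep^\beta\cd x\subset V_0$, $\lier^{\beta+}\cd x\subset V_+$ from Proposition~\ref{tangent} together with $\ker d\mup(x)=(\liep\cd x)^\bot$ forces strict positivity. Your reduction $\lieg\cd x\subset\liek\cd x+T_xS^{\beta+}$ via $\lier^{\beta-}\subset\liek+\lier^{\beta+}$ is also correct and is the standard step.

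You are right to flag (c) and (d) as the delicate points. The assertion that $H_x(f)$ agrees on $T_xS^{\beta+}$ with the Hessian of the shifted norm square $\tfrac{1}{2}\|\widehat{\mu_{\liep^\beta}}\|^2$, and the inclusion $V_0\oplus V_+\subset T_xS_\beta$, both genuinely require the local product/tube structure of the stratum established in \cite{heinzner-schwarz-stoetzel}; they are not consequences of the displayed Hessian formula alone. Since the present paper also takes these on faith by citing the source, your decision to quote rather than reprove is consistent with how the result is used here.
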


\begin{remark}\label{Hessian comp1}
\cite[6.7]{heinzner-schwarz-stoetzel} The tangent space $T_x(G\cdot x)$ decompose to $T_x(G\cdot x) = T_x(K\cdot x)\oplus \liep^\beta\cdot x \oplus \lier^{\beta+}\cdot x.$ This follows from the decomposition $G = K G^{\beta+},$ $G^{\beta+} = G^{\beta}R^{\beta+},$ the identity $K\cap G^{\beta+} = K^\beta$ and the fact that $G^\beta$ acts on $X^\beta$ whereas $R^{\beta+}$ acts on the fibers of $\liep^{\beta+}.$ Thus the behaviour of $H_x(f)$ on $T_x(G\cdot x)$ is precisely described by Proposition \ref{Hessian comp}.
\end{remark}

From the proof of Theorem \ref{teo}, it was observed that $f$ satisfies the Lojasiewicz’s gradient inequality. As an application, we prove a well-known result that the stratum corresponding to the minimum of $f$ is open.
\begin{theorem}
If $\beta \in \mathfrak{B}$ is such that $\frac{1}{2}\parallel \beta\parallel^2$ is a minimum value of $f.$ Then the corresponding stratum $S_\beta$ is open in $X.$
\end{theorem}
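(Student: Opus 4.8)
The goal is to show that if $\tfrac12\|\beta\|^2$ is the \emph{minimum} value of $f$, then the stratum $S_\beta$ is open. The natural strategy is to combine the Lojasiewicz gradient inequality (established in the course of proving Theorem \ref{teo}) with the identification of $S_\beta$ as the set of points whose negative gradient flow limits into the critical level $C_\beta := \mu_\mathfrak{p}^{-1}(\mathrm{Ad}(K)\beta) \cap C$ realizing the minimum. Concretely, I would argue that $S_\beta$ coincides with the \emph{unstable set} (equivalently here, the stable set) of this critical submanifold, and then show that openness is forced because this is the \emph{lowest} critical level: nothing can flow away from it, so a whole neighborhood of it must flow into it.

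First I would recall from Proposition \ref{Hessian comp} that at a critical point $x$ in the stratum $S_\beta$, the Hessian $H_x(f)$ is negative definite on $T_x(S_\beta)^\perp$ and nonnegative on $T_x(S_\beta)$; combined with the fact (Corollary \ref{slice-cor-2} / Corollary \ref{MorseBott}) that near such a critical point the flow is modeled, via an $\mathbb{R}$-equivariant diffeomorphism, on a linear flow on $T_xX$, this shows that $S_\beta$ is (near its critical set) exactly the local unstable-plus-center manifold. Since $\tfrac12\|\beta\|^2$ is the global minimum of $f$, the function $f$ can only increase or stay constant along nonconstant negative gradient flow lines emanating from $C_\beta$... wait — more precisely, along negative gradient flow lines $f$ is nonincreasing, and $C_\beta$ sits at the bottom, so no negative gradient trajectory starting off $C_\beta$ can converge to $C_\beta$ while having started at a strictly higher value and — the correct statement is that there are simply no \emph{positive} directions to escape along. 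The key point: by Theorem \ref{teo} every point $x \in X$ has a well-defined limit $x_\infty = \phi_\infty(x) \in C$ under the negative gradient flow, and $f(x_\infty) \le f(x)$, hence $f(x_\infty) \ge \tfrac12\|\beta\|^2$ with equality only when $x \in S_\beta$; so $S_\beta = \{x : f(\phi_\infty(x)) = \min f\}$.

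The heart of the argument is then a continuity/semicontinuity statement: I would show that the map $x \mapsto f(\phi_\infty(x))$ is lower semicontinuous, or directly that the set where it attains the minimum is open. For this, fix $x_0 \in S_\beta$ with limit $p := \phi_\infty(x_0) \in C_\beta$. Using the local normal form of Corollary \ref{slice-cor-2} around $p$ together with the Hessian signs from Proposition \ref{Hessian comp}(4) (strict negativity transverse to $T_p(S_\beta)$), there is a neighborhood $\Omega$ of $p$ such that every point of $\Omega$ flows (in negative gradient time) toward the critical set inside $\Omega$, landing in $C_\beta$ — this is the standard local structure of a Morse–Bott minimum-type stratum. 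Then, because $\phi_\infty$ is continuous in a neighborhood of $x_0$ whenever the limit point is a nondegenerate-normal critical point and the flow is eventually trapped in such an $\Omega$ (Lojasiewicz gives the rate estimate in Theorem \ref{teo}, which upgrades pointwise convergence to local uniformity), there is a whole neighborhood $V$ of $x_0$ with $\phi_\infty(V) \subset \Omega$, hence $f(\phi_\infty(y)) = \min f$ for all $y \in V$, i.e. $V \subset S_\beta$.

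\textbf{Main obstacle.} The delicate step is the local uniformity of the flow near the minimal critical set — i.e., promoting "$\phi_\infty(x_0) \in C_\beta$" to "$\phi_\infty(y) \in C_\beta$ for $y$ near $x_0$." A priori $\phi_\infty$ need not be continuous on all of $X$ (the critical set may be complicated, as the authors stress), so one cannot just invoke continuity; the argument must use that $C_\beta$ is the \emph{bottom} level, so that once a trajectory enters a small $\Omega$ around a point of $C_\beta$ it cannot escape (there is no lower level to fall to and, by strict negativity of the Hessian in the $T_p(S_\beta)^\perp$ directions, it is pulled toward $C_\beta \cap \Omega$ rather than pushed out). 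Making this trapping precise — choosing $\Omega$ so that $f^{-1}([\min f, \min f + \epsilon)) \cap \Omega$ is forward-flow-invariant and retracts onto $C_\beta \cap \Omega$ — is where the Lojasiewicz inequality and the Morse–Bott Hessian computation (Corollary \ref{MorseBott}, Proposition \ref{Hessian comp}) both get used, and is the technical crux of the proof.
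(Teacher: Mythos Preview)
Your overall strategy---identify $S_\beta$ with the basin of attraction of the minimal critical level under the negative gradient flow and then show this basin is open---is sound, and the Lojasiewicz trapping idea in your final paragraph can be made to work. But you are working much harder than the paper does, and one of the tools you invoke is not applicable.

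The paper's argument is short because it uses a fact you never mention: by the Stratification Theorem (Theorem \ref{Stratification}) the set $\mathfrak{B}_+$ is \emph{finite}, so the critical values of $f$ are isolated. Writing $\beta=\beta_1$ and $\|\beta_j\|>\|\beta\|$ for $j\ge 2$, one simply takes the open sublevel set $U=\{x\in X:\ f(x)<\tfrac12\|\beta\|^2+\delta'\}$ with $\delta'$ smaller than the gap to the next critical value. Since $f$ decreases along the flow and every trajectory has a limit $x_\infty$ which is critical (Theorem \ref{teo}), any flow line starting in $U$ has $f(x_\infty)<\tfrac12\|\beta_j\|^2$ for all $j\ge 2$, hence $f(x_\infty)=\tfrac12\|\beta\|^2$ and $x_\infty\in C_\beta$. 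Thus $U\subset S_\beta$; and since every point of $S_\beta$ eventually enters $U$, continuity of the time-$T$ flow map gives openness of $S_\beta$. No local normal form, no Hessian signs, and no pointwise trapping neighborhood $\Omega$ are needed---the discreteness of the critical values replaces all of that.

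One correction: Corollary \ref{MorseBott} and Corollary \ref{slice-cor-2} concern the function $\mu_\mathfrak{p}^\beta$ and the flow of $\exp(t\beta)$ for a \emph{fixed} $\beta\in\mathfrak{p}$, not the norm square $f$ or its gradient flow. The paper stresses that $f$ is generally \emph{not} Morse--Bott, so you cannot linearize the flow of $-\nabla f$ via those results, and the phrase ``the standard local structure of a Morse--Bott minimum-type stratum'' is not justified here. Your Lojasiewicz argument does not actually need that input; drop the Morse--Bott appeals and, better, replace the whole local trapping construction by the critical-value-gap argument above.
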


\begin{proof} From the proof of Theorem \ref{teo}, there exists constants $\delta > 0,$ $\alpha > 0,$ and $\frac{1}{2} < \gamma < 1$ such that, for every critical value $a$ of $f$ and every $x\in X,$
\begin{equation}
    |f(x)-a| < \delta \qquad \implies |f(x)-a|^\gamma \leq \alpha|\triangledown f(x)|.
\end{equation} In particular, if $x$ is a critical point, then $f(x) = a.$ Since $X$ is compact, by  Theorem \ref{Stratification}, $$X = \bigsqcup_{i = 1}^kS_{\beta_i},$$ where $\beta_i \in \mathfrak{B}_+.$ We may assume that $\beta = \beta_1$ and so, $\parallel \beta_j \parallel > \parallel \beta \parallel$ for any $j = 2, \cdots, k.$
Let $$0< \delta' < \text{min} \left(\delta, \frac{\parallel \beta_2 \parallel^2 - \parallel \beta\parallel^2}{2}, \cdots, \frac{\parallel \beta_k\parallel^2 - \parallel \beta\parallel^2}{2}\right).$$ Let
$$
U = \{x\in X : |f(x) - \parallel \beta\parallel^2|< \delta'\}.
$$ Let $x_0\in U$ and let $x(t)$ the gradient flow of $-\nabla f$ through $x_0.$ Therefore,
\begin{align*}
    f(x(t)) \leq f(x_0) \leq |f(x_0) - \parallel \beta\parallel^2| + \parallel \beta\parallel^2\\
    & < \delta' + \parallel \beta\parallel^2\\
    & < \frac{\parallel \beta_j \parallel^2 - \parallel\beta\parallel^2}{2} + \parallel \beta\parallel^2 \quad j = 2, \cdots, k\\
    & < \frac{\parallel \beta_j \parallel^2 + \parallel \beta\parallel^2}{2}\leq \parallel \beta_j \parallel^2, \quad j = 2, \cdots, k.
\end{align*}
Therefore, $f(x_\infty) = \parallel \beta\parallel^2$. This implies that $U \subset S_\beta$ and so, using standard arguments of the gradient flow,  $S_\beta$ is open.
\end{proof}

As in \cite{E. Lerman}, we have the following deformation retraction.
\begin{theorem}[Retraction Theorem] \label{retraction-theorem}
Let $\beta \in \mathfrak{a}_+$ be a critical value of $f.$ Let $S_\beta$ be the stratum associated to $\beta.$ Let $\phi_t(x)$ denote the gradient flow of $-\nabla f.$ Then there exist a $K$-equivariant strong deformation retraction of $S_\beta$ onto $S_\beta \cap \mu_\mathfrak{p}^{-1}(K\cdot \beta)$ given by
$$[0, \infty] \times S_\beta \to S_\beta \cap \mu_\mathfrak{p}^{-1}(K\cdot \beta), \quad (t, p) \mapsto \phi_t(p),$$ and $$\phi_\infty(p) = \lim_{t\to +\infty}\phi_t(p).$$
\end{theorem}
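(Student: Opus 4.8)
The plan is to build the retraction in two stages and then check $K$-equivariance and continuity at $t=\infty$. First I would recall from the Stratification Theorem that $S_\beta = G\cdot S^{\beta+}$, and from Proposition~\ref{Hessian comp} (together with Remark~\ref{Hessian comp1}) that along the stratum the Hessian of $f$ is $\geq 0$ on $T_x(S_\beta)$ and $<0$ on its orthogonal complement, so the negative gradient flow $\phi_t$ of $f$ leaves $S_\beta$ invariant: a flow line starting in $S_\beta$ cannot escape, since the unstable directions transverse to $S_\beta$ are exactly the directions in which $f$ decreases fastest and these point into $S_\beta$. Hence $\phi_t$ restricts to a flow on $S_\beta$ for all $t\geq 0$.

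Next I would show the limit $\phi_\infty(p)=\lim_{t\to+\infty}\phi_t(p)$ exists for every $p\in S_\beta$ and lies in $S_\beta\cap\mu_\mathfrak{p}^{-1}(K\cdot\beta)$. Existence and uniqueness of the limit is exactly Theorem~\ref{teo}: because $f$ satisfies the Lojasiewicz gradient inequality (established in the proof of Theorem~\ref{teo} with no analyticity hypothesis on $X$), the flow line has finite length and converges to a critical point $x_\infty$ of $f$. It remains to identify which critical set $x_\infty$ belongs to. Since $f(\phi_t(p))$ is nonincreasing and, by the closure relation $\overline{S_\beta}\subset S_\beta\cup\bigcup_{|\gamma|>|\beta|}S_\gamma$, any critical value reachable as a limit from $S_\beta$ either equals $\tfrac12\|\beta\|^2$ or is strictly larger — but $f$ decreases along the flow, so the only possibility consistent with staying in $S_\beta$ is $f(x_\infty)=\tfrac12\|\beta\|^2$. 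Combined with the fact that the critical component of $f$ inside $S_\beta$ at level $\tfrac12\|\beta\|^2$ is precisely $S_\beta\cap\mu_\mathfrak{p}^{-1}(K\cdot\beta)$ (the $K$-sweep of the shifted zero level set, via $S^{\beta+}$ and $\widehat{\mu_{\mathfrak{p}^\beta}}=\mu_{\mathfrak{p}^\beta}-\beta$), this pins down $\phi_\infty(p)$. On $S_\beta\cap\mu_\mathfrak{p}^{-1}(K\cdot\beta)$ the flow is stationary, so $\phi_\infty$ is a retraction.

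To get a \emph{strong deformation retraction} I would define $H:[0,\infty]\times S_\beta\to S_\beta$ by $H(t,p)=\phi_t(p)$ for $t<\infty$ and $H(\infty,p)=\phi_\infty(p)$, and verify: $H(0,\cdot)=\mathrm{id}$; $H(t,\cdot)$ fixes $S_\beta\cap\mu_\mathfrak{p}^{-1}(K\cdot\beta)$ pointwise for all $t$ (points there are critical); and $H$ is continuous, including at $t=\infty$. Continuity at $t=\infty$ is where the Lojasiewicz estimate does the real work: the quantitative bound $d(x(t),x_\infty)\leq C(t-T)^{-\psi}$ from Theorem~\ref{teo}, together with continuous dependence of critical values and a compactness argument on $S_\beta$ (or its closure), gives uniform convergence on compacta, hence joint continuity. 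Finally, $K$-equivariance is immediate: $f$ is $K$-invariant (the norm is $\mathrm{Ad}(K)$-invariant and $\mu_\mathfrak{p}$ is $K$-equivariant), so $\nabla f$ is $K$-equivariant, so $k\cdot\phi_t(p)=\phi_t(k\cdot p)$; passing to the limit gives $k\cdot\phi_\infty(p)=\phi_\infty(k\cdot p)$, and $\mu_\mathfrak{p}^{-1}(K\cdot\beta)$ is $K$-stable.

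The main obstacle I expect is the continuity of $H$ at $t=\infty$: pointwise convergence of the flow is cheap, but showing the limit map $\phi_\infty$ is continuous (so that $H$ is a genuine homotopy) requires the uniform Lojasiewicz-type control across nearby flow lines, which is exactly the technical heart of the no-analyticity argument borrowed from \cite{Salamon} and adapted in Theorem~\ref{teo}. A secondary subtlety is checking that the flow genuinely preserves $S_\beta$ rather than merely $\overline{S_\beta}$ — this needs the sign statement (4) in Proposition~\ref{Hessian comp} applied along the whole flow line, not just at the critical point, which one handles by the usual stable/unstable manifold reasoning for the Morse–Bott–type behaviour of $f$ transverse to the stratum.
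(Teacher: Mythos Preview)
The paper does not actually give a proof of this theorem: it is stated with the preamble ``As in \cite{E. Lerman}'' and no argument follows. The implicit claim is that, once the Lojasiewicz gradient inequality for $f$ is available without analyticity assumptions (this is what Theorem~\ref{teo} supplies), Lerman's argument goes through verbatim. Your proposal is essentially a fleshed-out version of that argument, and you correctly single out the continuity of $\phi_\infty$ (equivalently, joint continuity of $H$ at $t=\infty$) as the technical heart, to be handled via the quantitative Lojasiewicz estimate.

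One point where your sketch is slightly off: flow-invariance of $S_\beta$ should not be argued via the Hessian of $f$, which is only defined at critical points and says nothing about the flow at generic points of the stratum. The clean reason $S_\beta$ is $\phi_t$-invariant is that the negative gradient flow of $f$ stays inside $G$-orbits (Lemma~\ref{Gradient}: $x(t)=g(t)^{-1}x_0$), and $S_\beta=G\cdot S^{\beta+}$ is a union of $G$-orbits. This also immediately gives that the limit $x_\infty$ lies in $\overline{S_\beta}\cap\{f=\tfrac12\|\beta\|^2\}$, since $f$ is nonincreasing along the flow and the closure relation in Theorem~\ref{Stratification} forbids landing in a stratum with strictly larger $\|\gamma\|$; hence $x_\infty\in S_\beta\cap\mu_\mathfrak{p}^{-1}(K\cdot\beta)$. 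With that correction, your outline matches what the paper defers to Lerman.
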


\section{Kempf-Ness Function}\label{Kempf-Ness Function}

Given $G$ a real reductive group which acts smoothly on $Z;$ $G = K\text{exp}(\mathfrak{p}),$ where $K$ is a maximal compact subgroup of $G.$ Let $X$ be a $G$-invariant locally closed submanifold of $Z.$ As Mundet pointed out in \cite{MUNDET}, there exists a function $\Phi : X \times G \rightarrow \mathbb{R},$ such that
$$
\langle \mu_\mathfrak{p}(x), \xi\rangle = \frac{d}{dt}\bigg \vert_{t=0}\Phi (x, \text{exp}(t\xi)), \qquad \xi \in \mathfrak{p},
$$ and satisfying the following conditions:

\begin{enumerate}
    \item For any $x\in X,$ the function $\Phi (x, .)$ is smooth on $G.$
    \item The function $\Phi(x, .)$ is left-invariant with respect to $K,$ i.e., $\Phi(x, kg) = \Phi(x, g).$
    \item For any $x\in X,$ $v\in \mathfrak{p}$ and $t\in \mathbb{R};$

    $$\frac{d^2}{dt^2}\Phi (x, \text{exp}(tv)) \geq 0.$$
    Moreover: $$\frac{d^2}{dt^2}\Phi (x, \text{exp}(tv)) = 0$$ if and only if exp$(\mathbb{R}v)\subset G_x.$

    \item For any $x\in X,$ and any $g, h \in G;$
    $$\Phi(x, hg) = \Phi(x, g) + \Phi(gx, h).$$ This equation is called the cocycle condition. The proof is given in \cite{LM}, see also \cite{bgs}.
\end{enumerate}

The function $\Phi : X \times G \rightarrow \mathbb{R}$ is called the Kempf-Ness function for $(X, G, K).$

Let $M = G/K$ and $\pi : G \rightarrow M.$ $M$ is a symmetric space of non-compact type \cite{borel-ji-libro}. By $(b),$ $\Phi(x, kg) = \Phi(x, g),$ the function $\Phi$ descend to $M$ as $$\Phi : X \times M \rightarrow \mathbb{R};$$
\begin{equation}
\Phi(x,\pi(g)) = \Phi(x,g^{-1}).
\end{equation}
In this paper we have fixed an $\mathrm{Ad}(U)$-invariant scalar product $\langle \cdot,\cdot \rangle$ on $\liu^\C$. We recall that $\langle \liu, i\liu \rangle =0$ and the multiplication by $i$ defines an isometry from $\liu$ onto $i\liu$. Hence $\lieg =\liek \oplus \liep$ is an orthogonal splitting with respect to $\langle \cdot, \cdot \rangle $. Equip $G$ with the unique left invariant Riemannian metric which agree with the scalar product  $\langle \cdot,\cdot \rangle$ on the tangent space $\lieg=\mathfrak k \oplus \liep$ of $G$ at $e$. This metric is $\mathrm{Ad}(K)$ invariant and so it induces a $G$ invariant Riemannian metric of nonpositive curvature on $M$ \cite{helgason}.
\begin{lemma}\label{differential}
For $x\in X$, let $\Phi_x(g  K) = \Phi(x,g^{-1}).$ The differential of $\Phi_x$ is given as
$$
d(\Phi_x)_{\pi(g)}(v_x) = -\langle \mu_\mathfrak{p}(g^{-1}x), \xi\rangle
$$ where, $v_x(g) = \mathrm d (\pi \circ L_g )_e (\xi)$ and $\xi \in \mathfrak{p}.$ Therefore, $\nabla \Phi_x(\pi(g)) =  -
\mathrm d ( \pi \circ L_g )_e (\mu_\mathfrak{p}(g^{-1}x))$
\end{lemma}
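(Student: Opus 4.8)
The plan is to read off the differential of $\Phi_x$ directly from the structural properties of the Kempf--Ness function listed before the lemma — chiefly the defining identity $\frac{d}{ds}\big|_{s=0}\Phi(y,\exp(s\eta))=\langle\mu_\mathfrak{p}(y),\eta\rangle$ for $\eta\in\liep$ and the cocycle condition $(d)$, together with the already-noted fact (a consequence of left $K$-invariance $(b)$) that $\Phi_x$ descends to $M$ — and then to convert the formula for the differential into one for the gradient by using that the natural identification of $T_{\pi(g)}M$ with $\liep$ is an isometry.

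First I would fix $g\in G$ and $\xi\in\liep$. Since $\ker\mathrm d(\pi\circ L_g)_e=\liek$ and $\lieg=\liek\oplus\liep$, the restriction $\mathrm d(\pi\circ L_g)_e\colon\liep\to T_{\pi(g)}M$ is a linear isomorphism, so a tangent vector at $\pi(g)$ can be written as $v_x=\mathrm d(\pi\circ L_g)_e(\xi)$, represented by the curve $t\mapsto\pi\big(g\exp(t\xi)\big)=g\exp(t\xi)K$. From $\Phi_x(hK)=\Phi(x,h^{-1})$ this gives
$$
d(\Phi_x)_{\pi(g)}(v_x)=\frac{d}{dt}\Big|_{t=0}\Phi\big(x,\exp(-t\xi)\,g^{-1}\big).
$$
Next I would invoke the cocycle condition $\Phi(x,h g')=\Phi(x,g')+\Phi(g'x,h)$ with $g':=g^{-1}$ and $h:=\exp(-t\xi)$, which yields $\Phi\big(x,\exp(-t\xi)g^{-1}\big)=\Phi(x,g^{-1})+\Phi\big(g^{-1}x,\exp(-t\xi)\big)$. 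The first summand is constant in $t$, so differentiating the second and applying the defining identity with $y=g^{-1}x$, $\eta=\xi$, $s=-t$ gives
$$
d(\Phi_x)_{\pi(g)}(v_x)=-\langle\mu_\mathfrak{p}(g^{-1}x),\xi\rangle,
$$
which is the first assertion.

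For the gradient I would observe that $L_g$ descends to $\tau_g\colon M\to M$, $\tau_g(hK)=ghK$, which is an isometry of $M$ since the chosen metric is $G$-invariant, and that $\pi\circ L_g=\tau_g\circ\pi$. Hence $\mathrm d(\pi\circ L_g)_e|_{\liep}=\mathrm d(\tau_g)_{eK}\circ\mathrm d\pi_e|_{\liep}$ is a linear isometry from $(\liep,\langle\cdot,\cdot\rangle)$ onto $T_{\pi(g)}M$, because by construction the $G$-invariant metric on $M$ restricts on $T_{eK}M\cong\liep$ to $\langle\cdot,\cdot\rangle|_{\liep}$. Writing $\beta:=\mu_\mathfrak{p}(g^{-1}x)\in\liep$, the identity just proved says $d(\Phi_x)_{\pi(g)}\big(\mathrm d(\pi\circ L_g)_e(\xi)\big)=\langle-\beta,\xi\rangle$ for all $\xi\in\liep$, and since the identification is isometric this means precisely $\nabla\Phi_x(\pi(g))=\mathrm d(\pi\circ L_g)_e(-\beta)=-\mathrm d(\pi\circ L_g)_e\big(\mu_\mathfrak{p}(g^{-1}x)\big)$.

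The computation is short and I do not expect a genuine obstacle. The two points needing care are the sign bookkeeping — it is the inverse in $\Phi_x(hK)=\Phi(x,h^{-1})$ together with the curve $g\exp(t\xi)$ that produces $\exp(-t\xi)$ and hence the minus sign — and the verification that $\mathrm d(\pi\circ L_g)_e|_{\liep}$ is an isometry, which is exactly what licenses passing from $d\Phi_x$ to $\nabla\Phi_x$; both are routine given the setup recorded before the statement.
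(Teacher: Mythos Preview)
Your argument is correct and follows essentially the same route as the paper: represent $v_x$ by the curve $t\mapsto\pi(g\exp(t\xi))$, unwind the definition $\Phi_x(hK)=\Phi(x,h^{-1})$, apply the cocycle condition $(d)$, and then the defining identity for $\mu_\mathfrak{p}$. Your treatment of the gradient is in fact more explicit than the paper's, which simply asserts the conclusion; your observation that $\mathrm d(\pi\circ L_g)_e|_{\liep}$ is an isometry (via $G$-invariance of the metric on $M$) is exactly what justifies that step.
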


\begin{proof}
Let $\pi(g)\in M,$ $\xi \in \mathfrak{p}$ and $v_x\in T_{\pi(g)}G/K.$ There exist $\xi \in \mathfrak{p}$ such that

$$v_x = \frac{d}{dt}\bigg \vert_{t=0} g\text{exp}(t\xi)K.$$

Take
$$ \gamma(t) = \pi(g\text{exp}(t\xi)), \qquad t\in [a,b], \; \xi \in \mathfrak{p}.$$
Then 
$v_x = (d\pi)_g((dL_g)(\xi)),$

\begin{align*}
    d(\Phi_x)_{\pi(g)}(v_x) &= \frac{d}{dt}\bigg \vert_{t = 0}\Phi(x, \gamma(t))\\
    & = \frac{d}{dt}\bigg \vert_{t = 0}\Phi(x, \pi(g\text{exp}(t\xi))\\
    & = \frac{d}{dt}\bigg \vert_{t = 0}\Phi(x, \text{exp}(-t\xi)g^{-1}) \qquad (\mbox{by the deifintion of} \,\, \Phi)\\
    & = \frac{d}{dt}\bigg \vert_{t = 0} [\Phi(x, g^{-1}) + \Phi(g^{-1}x, \text{exp}(-t\xi)) ] \qquad (\mbox{by condition (d)})\\
    & = \frac{d}{dt}\bigg \vert_{t = 0} \Phi(g^{-1}x, \text{exp}(-t\xi))\\
    & = -\langle \mu_\mathfrak{p}(g^{-1}x), \xi\rangle.
 \end{align*}
This implies that
$$ d(\Phi_x)_{\pi(g)}(v_x) = -\langle \mu_\mathfrak{p}(g^{-1}x), \xi\rangle. 
$$
\end{proof}
We denote by the same symbol  $\Phi_x : G \rightarrow \mathbb{R}$, the function $\Phi_x (g)=\Phi(x,g^{-1})$. It is called the Kempf-Ness function at $x$.
Define $\phi_x: G\to G\cdot x$ as follows $$\phi_x(g) = g^{-1}x.$$
\begin{lemma}\label{differential2}
The map $\phi_x$ intertwines the gradient of $\Phi_x :G \lra \R$ and the gradient of $f.$ i.e., $\forall g\in G$
$$
d(\phi_x)_g\nabla\Phi_x = \nabla f((\phi_x(g)).
$$
\end{lemma}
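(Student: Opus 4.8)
The plan is to compute both gradients explicitly and show that the differential $d(\phi_x)_g$ carries one to the other, working pointwise at an arbitrary $g\in G$. Write $y := \phi_x(g) = g^{-1}x$ and $\beta := \mu_\mathfrak{p}(y)\in\mathfrak{p}$. By Lemma \ref{nmg} we already know $\nabla f(y) = \beta_X(y)$, so the right-hand side is explicit. For the left-hand side, I would use Lemma \ref{differential}, which gives $\nabla\Phi_x(\pi(g)) = -\,\mathrm d(\pi\circ L_g)_e(\mu_\mathfrak{p}(g^{-1}x)) = -\,\mathrm d(\pi\circ L_g)_e(\beta)$ on $M = G/K$; since $\Phi_x$ is $K$-invariant, this is equivalent data to the gradient of $\Phi_x$ on $G$, which is the left-invariant vector field whose value at $g$ is $\mathrm d(L_g)_e(-\beta)$ (using that $\mathfrak p$ is orthogonal to $\mathfrak k$, so $-\beta\in\mathfrak p\subset\mathfrak g$ is the honest gradient direction and not merely its image in $\mathfrak g/\mathfrak k$).

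The key computation is then to identify $d(\phi_x)_g$ applied to the tangent vector $\mathrm d(L_g)_e(-\beta)\in T_gG$. Since $\phi_x(g) = g^{-1}x$, differentiating $g\,\mathrm{exp}(-t\beta)\mapsto \mathrm{exp}(t\beta)\,g^{-1}x$ at $t=0$ gives
\[
d(\phi_x)_g\bigl(\mathrm d(L_g)_e(-\beta)\bigr) = \frac{d}{dt}\bigg\vert_{t=0}\mathrm{exp}(t\beta)\cdot(g^{-1}x) = \beta_X(g^{-1}x) = \beta_X(y).
\]
Comparing with $\nabla f(y) = \beta_X(y)$ from Lemma \ref{nmg} finishes the proof. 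One should be slightly careful about the sign conventions coming from the inversion $g\mapsto g^{-1}$ and from the definition $\Phi_x(g) = \Phi(x,g^{-1})$: the minus sign in Lemma \ref{differential} and the minus sign produced by differentiating the inverse must cancel, which is exactly what happens above. I would also note that the left-invariant metric on $G$ was chosen precisely to agree with $\langle\cdot,\cdot\rangle$ on $\mathfrak g = \mathfrak k\oplus\mathfrak p$ at the identity, so the raising/lowering of indices used to pass from $d\Phi_x$ to $\nabla\Phi_x$ is the obvious one.

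The main obstacle is purely bookkeeping: keeping the three identifications straight — the descent from $G$ to $M=G/K$, the left-translation trivialization of $TG$, and the inversion in $\phi_x$ — so that all signs and all transports of the vector $\beta$ match up. There is no serious analytic content; once the conventions from Section \ref{Kempf-Ness Function} and Lemma \ref{differential} are pinned down, the identity $d(\phi_x)_g\nabla\Phi_x = \nabla f(\phi_x(g))$ is a one-line consequence of the fundamental-vector-field computation together with Lemma \ref{nmg}.
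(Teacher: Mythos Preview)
Your proposal is correct and follows essentially the same route as the paper: compute $d(\phi_x)_g$ on the left-translated vector $dL_g(\beta)$ via $\phi_x(g\exp(t\beta)) = \exp(-t\beta)g^{-1}x$, then specialize to $\beta = \mu_\mathfrak{p}(g^{-1}x)$ and invoke Lemma~\ref{nmg}. Your bookkeeping is in fact cleaner than the paper's, which writes $\beta = \mu_\mathfrak{p}(x)$ where $\mu_\mathfrak{p}(g^{-1}x)$ is meant and has a typo writing $(d\Phi_x)_g$ in place of $(d\phi_x)_g$.
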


\begin{proof}
Let $\beta\in\mathfrak{p}.$ Since
$$\phi_x(g\text{exp}(t\beta)) = \text{exp}(-t\beta)g^{-1}x,$$
we have
$$(d\Phi_x)_g(dL_g(\beta)) = -\beta_X(g^{-1}x).$$ The result follow from taking $\beta = \mu_\mathfrak{p}(x).$
\end{proof}
\begin{remark}
A smooth curve $\gamma(t):= \pi \circ g : \mathbb{R} \to M$ is a negative gradient flow line of $\Phi_x$ if and only if the smooth curve $g : \mathbb{R} \to M$ satisfies $g^{-1}\dot{g} = \mu_\mathfrak{p}(g^{-1}x).$ Indeed, from Lemma \ref{differential}, 
$$\nabla \Phi_x(\pi(g)) = - \mathrm d (\pi \circ dL_g)_e (\mu_\mathfrak{p}(g^{-1}x))$$ and by Lemma \ref{differential2}, $\phi_x$ interwines the gradient of $\Phi_x$ with $\nabla f.$ Therefore, the gradient flow of $\Phi_x$ is such that $g^{-1}\dot{g} = \mu_\mathfrak{p}(g^{-1}x)$. On the other hand if $g: \mathbb{R}\to G$ satisfies $g^{-1}\dot{g} = \mu_\mathfrak{p}(g^{-1}x)$, the curve $\gamma = \pi \circ g(t)$ is a geodesic and
$$
\frac{d}{dt}(\Phi_x\circ \gamma) = -\langle \mu_\mathfrak{p} (g^{-1}x), g^{-1}\dot{g}\rangle.
$$
\end{remark}

We recall some result from Riemannian geometry. We refer the reader to Apendix A in \cite{Salamon} for further details. Suppose $M$ is an Hadamard manifold, i.e., connected, complete, simply-connected with non-positive curvature. Let $\gamma_1, \gamma_2 : [a, b] \to M$ be smooth curves and for each $t\in [a, b],$ $\gamma(s, t): [0, 1] \to M$ be the unique geodesic such that $$
\gamma(0, t) = \gamma_1(t), \qquad \gamma(1,t) = \gamma_2(t).
$$
Define the function $\rho: [a, b] \to \mathbb{R}$ by
$$\rho (t) := d(\gamma_1(t), \gamma_2(t)).$$ The following Lemma is proved in \cite[Lemma A.2 ]{Salamon}
\begin{lemma}\label{rmm} Suppose $f: M\to \mathbb{R}$ is a smooth function that is convex along geodesics. Let $\gamma_1, \gamma_2 : \mathbb{R} \to M$ be the negative gradient flow lines of $f,$ and let $\gamma$ and $\rho$ be as defined above. Then $\rho$ is nonincreasing and, if $\rho(t) \neq 0,$ then
\begin{equation}\label{rho}
    \dot{\rho}(t) = -\frac{1}{\rho(t)}\int_0^1\frac{\partial^2}{\partial s^2}(f\circ \gamma)(s,t)ds.
\end{equation}
\end{lemma}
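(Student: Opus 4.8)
The plan is to follow the standard first- and second-variation argument for the energy/distance of a geodesic homotopy in an Hadamard manifold, combined with the convexity of $f$ along geodesics. First I would parametrize the geodesic homotopy $\gamma(s,t)$ by arclength-proportional parameter $s\in[0,1]$, so that for each fixed $t$ the curve $s\mapsto\gamma(s,t)$ is a geodesic of length $\rho(t)=d(\gamma_1(t),\gamma_2(t))$. Writing $T=\partial_s\gamma$ and $J=\partial_t\gamma$ (a Jacobi field along each geodesic $s\mapsto\gamma(s,t)$, since variations through geodesics produce Jacobi fields), the key preliminary identity is $\rho(t)^2=\int_0^1|T(s,t)|^2\,ds$ together with $|T|$ being constant in $s$ (constant-speed geodesic), so $\rho(t)=|T(s,t)|$ for every $s$.

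Next I would compute $\dot\rho(t)$ by the first variation of arclength: differentiating $\rho(t)^2=\int_0^1\langle T,T\rangle\,ds$ gives $2\rho\dot\rho=2\int_0^1\langle \nabla_t T,T\rangle\,ds=2\int_0^1\langle\nabla_s J,T\rangle\,ds$, using torsion-freeness $\nabla_tT=\nabla_sJ$. Since $\nabla_sT=0$ (geodesic), $\langle\nabla_sJ,T\rangle=\partial_s\langle J,T\rangle$, so the integral telescopes to $\langle J,T\rangle\big|_{s=0}^{s=1}$. Now $J(0,t)=\dot\gamma_1(t)=-\nabla f(\gamma_1(t))$ and $J(1,t)=\dot\gamma_2(t)=-\nabla f(\gamma_2(t))$, while $T(1,t)$ is the initial velocity of the reversed geodesic scaled appropriately and $T(0,t)$ its velocity at the start; by the first variation formula $\langle J,T\rangle|_{s=1}=\langle\nabla f(\gamma_2),\text{(unit tangent toward }\gamma_1)\rangle\cdot\rho$ etc. Concretely, $\rho\dot\rho=\langle\dot\gamma_2(t),T(1,t)\rangle-\langle\dot\gamma_1(t),T(0,t)\rangle$, and since $T(1,t)$ points away from $\gamma_1$ and $T(0,t)$ points toward $\gamma_2$, one recognizes the right-hand side as $-\big(\frac{d}{dt}(f\circ\gamma)(1,t)-\frac{d}{dt}(f\circ\gamma)(0,t)\big)$ expressed via the chain rule — more precisely, I would relate it to $-\int_0^1\partial_s\big(\frac{d}{dt}(f\circ\gamma)(s,t)\big)\,ds$.

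To get the stated formula I would then use the second variation: $\partial_s\frac{d}{dt}(f\circ\gamma)=\partial_s\langle\nabla f(\gamma),J\rangle=\langle\nabla_s\nabla f,J\rangle+\langle\nabla f,\nabla_sJ\rangle$. Here $\langle\nabla_sJ,\nabla f\rangle$ contributes the boundary terms matching $\rho\dot\rho$ after integration, and $\langle\nabla_s\nabla f(\gamma),J\rangle=\langle\nabla_T\nabla f,J\rangle$; meanwhile $\frac{\partial^2}{\partial s^2}(f\circ\gamma)=\operatorname{Hess}f(T,T)\ge 0$ by convexity along the geodesic $s\mapsto\gamma(s,t)$. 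Assembling these — differentiating the relation $\rho\dot\rho=\langle J,T\rangle|_0^1$ once more in $s$ under the integral and substituting — yields $\rho\dot\rho=-\int_0^1\frac{\partial^2}{\partial s^2}(f\circ\gamma)(s,t)\,ds$, which is \eqref{rho}, and since the integrand is $\ge 0$ we get $\dot\rho\le 0$, i.e. $\rho$ is nonincreasing. For the case $\rho(t)=0$ on a subinterval the monotonicity still follows by continuity. The main obstacle is bookkeeping the boundary terms correctly: one must carefully track which endpoint tangent vector points in which direction and verify that the cross terms $\langle\nabla f,\nabla_sJ\rangle$ at $s=0,1$ reproduce exactly $\rho\dot\rho$ so that only the Hessian term survives; this is where nonpositive curvature is implicitly used to guarantee the geodesic homotopy $\gamma(s,t)$ is unique and smooth in $t$, legitimizing all the differentiations under the integral sign. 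Since this is precisely \cite[Lemma A.2]{Salamon}, I would present the computation in this order and refer to the appendix of \cite{Salamon} for the routine Jacobi-field estimates.
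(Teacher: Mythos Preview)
The paper does not give its own proof of this lemma: it simply states the result and attributes it to \cite[Lemma~A.2]{Salamon}. In that sense your concluding sentence---referring to \cite{Salamon}---already matches what the paper does. But since you attempt the argument, let me point out a bookkeeping slip that makes the write-up, as it stands, not close.

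You correctly derive the first-variation identity $\rho\dot\rho=\langle\dot\gamma_2,T(1,t)\rangle-\langle\dot\gamma_1,T(0,t)\rangle$. You then identify the right-hand side as $-\big(\partial_t(f\circ\gamma)(1,t)-\partial_t(f\circ\gamma)(0,t)\big)$, which is wrong: $\partial_t(f\circ\gamma)=\langle\nabla f,J\rangle$, and at the endpoints $J=-\nabla f$, so that expression would give $|\nabla f(\gamma_2)|^2-|\nabla f(\gamma_1)|^2$, not what you want. The correct identification uses $\partial_s$, not $\partial_t$: since $\dot\gamma_i=-\nabla f(\gamma_i)$ and $\partial_s(f\circ\gamma)=\langle\nabla f(\gamma),T\rangle$, one has directly
\[
\rho\dot\rho
=-\langle\nabla f(\gamma_2),T(1,t)\rangle+\langle\nabla f(\gamma_1),T(0,t)\rangle
=-\Big[\partial_s(f\circ\gamma)\Big]_{s=0}^{s=1}
=-\int_0^1\partial_s^2(f\circ\gamma)(s,t)\,ds
\]
by the fundamental theorem of calculus. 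That is already \eqref{rho}; convexity of $f$ along the geodesic $s\mapsto\gamma(s,t)$ gives $\partial_s^2(f\circ\gamma)\ge 0$, hence $\dot\rho\le 0$. Your subsequent detour through the mixed derivative $\partial_s\partial_t(f\circ\gamma)$, the Hessian term $\langle\nabla_T\nabla f,J\rangle$, and ``differentiating $\rho\dot\rho=\langle J,T\rangle|_0^1$ once more in $s$'' is unnecessary and, as written, does not actually assemble into $\int_0^1\partial_s^2(f\circ\gamma)\,ds$. No second-variation computation is needed; the argument is the first variation formula plus one application of the fundamental theorem of calculus. Nonpositive curvature enters only to guarantee that the connecting geodesic $\gamma(\cdot,t)$ is unique and depends smoothly on $t$, which you noted correctly.
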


\begin{theorem}\label{ConvexKempf}
Let $\Phi_x : M \to \mathbb{R}.$ Then
\begin{enumerate}
    \item $\Phi_x$ is a Morse-Bott function and it is convex along geodesics.
    \item If $\gamma : \mathbb{R}\to M$ is a negative gradient flow of $\Phi_x,$ then, $$\lim_{t\to \infty}\Phi_x(\gamma(t)) = \text{inf}_{x\in M}\Phi_x.$$
\end{enumerate}

\end{theorem}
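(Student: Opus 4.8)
The plan is to establish convexity along geodesics first and then to deduce from it both the Morse--Bott statement and part (b). For the convexity assertion: every geodesic of $M=G/K$ issuing from $\pi(g)$ has the form $t\mapsto\pi(g\exp(t\xi))$ with $\xi\in\liep$, and the cocycle condition $(d)$ gives $\Phi_x(\pi(g\exp(t\xi)))=\Phi(x,g^{-1})+\Phi(g^{-1}x,\exp(-t\xi))$. Hence $\frac{d^2}{dt^2}\Phi_x(\pi(g\exp(t\xi)))=\frac{d^2}{dt^2}\Phi(g^{-1}x,\exp(-t\xi))\ge 0$ by property $(c)$, and this vanishes exactly when $\exp(\R\xi)\subset G_{g^{-1}x}$. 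Thus $\Phi_x$ is convex along geodesics; in particular every critical point of $\Phi_x$ is a global minimum, so $\Crit(\Phi_x)$ equals the set $N$ of minimizers, which is closed and geodesically convex.

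For the Morse--Bott assertion: by Lemma \ref{differential} one has $d(\Phi_x)_{\pi(g)}=0$ iff $\mup(g^{-1}x)=0$, so $\Crit(\Phi_x)=\{\pi(g):\mup(g^{-1}x)=0\}$, and if this is empty there is nothing to prove. Otherwise fix $p_0=\pi(g_0)\in N$, put $y=g_0^{-1}x$ (so $\mup(y)=0$) and $\liep_y:=\liep\cap\lieg_y$, with $\lieg_y$ the Lie algebra of the stabilizer $G_y$. Identifying $T_{p_0}M$ with $\liep$ via $\xi\mapsto\frac{d}{dt}\big|_{t=0}\pi(g_0\exp(t\xi))$, the computation above identifies the Hessian of $\Phi_x$ at $p_0$ with the positive semidefinite form $\xi\mapsto\frac{d^2}{dt^2}\big|_{t=0}\Phi(y,\exp(-t\xi))$, whose null space is $\{\xi\in\liep:\exp(\R\xi)\subset G_y\}=\liep_y$. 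On the other hand, for $\xi\in\liep_y$ one has $\exp(-\xi)\in G_y$, hence $(g_0\exp\xi)^{-1}x=\exp(-\xi)y=y$ and $\pi(g_0\exp\xi)\in\Crit(\Phi_x)$; so $F:=\pi(g_0\exp(\liep_y))\subset N$, and $F$ is an embedded submanifold of dimension $\dim\liep_y$ — the image of the linear subspace $\liep_y$ under the Cartan diffeomorphism $\liep\to M$, $\xi\mapsto\pi(g_0\exp\xi)$, a diffeomorphism because $K\times\liep\to G$ is — with $T_{p_0}F$ equal to the null space of the Hessian. Finally, if $q\in N$ is written $q=\pi(g_0\exp\eta)$ with $\eta\in\liep$, then geodesic convexity of $N$ forces the geodesic $t\mapsto\pi(g_0\exp(t\eta))$, $t\in[0,1]$, into $N$, so $\Phi_x$ is constant along it, its initial velocity lies in the null space of the Hessian at $p_0$, whence $\eta\in\liep_y$ and $q\in F$. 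Therefore $N=F$ is a smooth submanifold whose tangent space at each point is exactly the kernel of the (positive semidefinite) Hessian there, i.e. $\Phi_x$ is Morse--Bott; and the same computation, run at any point of $N$, shows it is connected and that $\Phi_x$ has a single critical value on it.

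For part (b): along a negative gradient flow line $\gamma$ one has $\frac{d}{dt}\Phi_x(\gamma(t))=-|\nabla\Phi_x(\gamma(t))|^2\le 0$, so $\Phi_x(\gamma(t))$ decreases to some $L\in[-\infty,\infty)$ with $L\ge\inf_M\Phi_x$ and $\int_0^\infty|\nabla\Phi_x(\gamma(t))|^2\,dt=\Phi_x(\gamma(0))-L$. If $L=-\infty$ then $\inf_M\Phi_x=-\infty=L$. Suppose $L$ is finite but $L>\inf_M\Phi_x$, and pick $p\in M$ with $\Phi_x(p)<L$; let $\gamma_p$ be the negative gradient flow line of $\Phi_x$ through $p$ (it exists for all $t\ge 0$: by convexity $|\nabla\Phi_x|$ is nonincreasing along forward flow lines, so there is no finite-time escape, and $M$ is complete). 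By Lemma \ref{rmm}, $\rho(t):=d(\gamma(t),\gamma_p(t))$ is nonincreasing, hence $\rho(t)\le d(\gamma(0),p)$. Convexity of $\Phi_x$ along the geodesic from $\gamma_p(t)$ to $\gamma(t)$ gives $\Phi_x(\gamma(t))-\Phi_x(\gamma_p(t))\le|\nabla\Phi_x(\gamma(t))|\,\rho(t)$, while $\Phi_x(\gamma(t))\ge L$ and $\Phi_x(\gamma_p(t))\le\Phi_x(p)$; so $|\nabla\Phi_x(\gamma(t))|\ge (L-\Phi_x(p))/d(\gamma(0),p)>0$ for all $t$, contradicting $\int_0^\infty|\nabla\Phi_x(\gamma(t))|^2\,dt<\infty$. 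Thus $L=\inf_M\Phi_x$. (Alternatively one can argue via the correspondence in the Remark preceding the theorem, which identifies $\gamma$ with the negative gradient flow of $f$ on $X$, and invoke Theorem \ref{teo}.)

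The step I expect to be the main obstacle is the identification, in the Morse--Bott argument, of the kernel of the Hessian of $\Phi_x$ at a critical point with the tangent space of the critical set: this requires that $\frac{d^2}{dt^2}\big|_{t=0}\Phi(y,\exp(-t\xi))=0$ already forces $\exp(\R\xi)\subset G_y$ — that is, that the one-variable restrictions $t\mapsto\Phi(y,\exp(-t\xi))$ are either strictly convex or affine — which is the sharp content of property $(c)$, and not a consequence of mere semidefiniteness of the Hessian.
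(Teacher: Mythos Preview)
Your proof is correct and follows essentially the same strategy as the paper: both obtain convexity and the Hessian kernel from the cocycle condition together with property~(c), identify $\Crit(\Phi_x)$ with $\pi(g_0\exp(\liep\cap\lieg_{g_0^{-1}x}))$, and use Lemma~\ref{rmm} for part~(b). Your treatment is in fact somewhat tidier: your Morse--Bott argument makes the identification of the critical set with an embedded submanifold explicit via geodesic convexity of the minimum set (the paper's justification here is terse), and your part~(b) is a single unified contradiction argument, whereas the paper splits into the cases $\Crit(\Phi_x)\neq\emptyset$ (invoking the Morse--Bott property to get convergence of a bounded flow line) and $\Crit(\Phi_x)=\emptyset$ (a contradiction essentially identical to yours).
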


\begin{proof}
(a): By lemma \ref{differential}, $g\in G$ is a critical of $\Phi_x$ if and only if $$\mu_\mathfrak{p}(g^{-1}x) = 0.$$
\begin{equation}
\text{Crit}(\Phi_x) = \{\pi(g)\in M : \mu_\mathfrak{p}(g^{-1}x) = 0\}.
\end{equation}
The next is to show that the $\text{Crit}(\Phi_x)$ is a submanifold of $M.$ To do this, the Hessian of the function is computed along geodesics. The geodesic on $M$ passing through $\pi(g)$ in the direction $v = d\pi_gg\xi$ has the form $\pi(g\text{exp}(t\xi))$ \cite{LM}. Hence, $M$ is complete and by the Hadamard theorem, $$\mathfrak{p} \rightarrow M, \qquad \xi \mapsto \pi(g\text{exp}(\xi))$$ is a diffeomorphism. This implies that $\pi(g\text{exp}(\xi))\in \text{Crit}(\Phi_x)$ if and only if $\mu_\mathfrak{p}(g^{-1}\text{exp}(-\xi)x) = 0.$
$$
\text{Hess}(\Phi_x) = d^2(\Phi_x)_{\pi(g)}(v), \qquad \pi(g)\in \text{Crit}(\Phi_x)
$$
\begin{align*}
d^2(\Phi_x)_{\pi(g)}(v) & = \frac{d^2}{dt^2}\bigg \vert_{t = 0}\Phi(x, \gamma(t))\\
    & = \frac{d^2}{dt^2}\bigg \vert_{t = 0}\Phi(x, \pi(g\text{exp}(-t\xi))\\
    & = \frac{d^2}{dt^2}\bigg \vert_{t = 0}\Phi(x, g\text{exp}(-t\xi)K)\\
    & = \frac{d^2}{dt^2}\bigg \vert_{t = 0}\Phi(x, \text{exp}(t\xi)g^{-1})\\
    & = \frac{d^2}{dt^2}\bigg \vert_{t = 0} [\Phi(x, g^{-1}) + \Phi(g^{-1}x, \text{exp}(t\xi)) ]\\
    & = \frac{d^2}{dt^2}\bigg \vert_{t = 0}\Phi(g^{-1}x, \text{exp}(t\xi) \geq 0 \qquad (\mbox{by condition (c)})
 \end{align*}
Moreover, $$ \frac{d^2}{dt^2}\bigg \vert_{t = 0}\Phi(g^{-1}x, \text{exp}(t\xi) = 0$$ if and only if exp$(t\xi) \subset G_{g^{-1}x}.$ Hence,
$$
\text{Crit}(\Phi_x) = \{\pi(g\text{exp}t\xi)\in M : \text{exp}t\xi \subset G_{g^{-1}x}\},
$$
which is a submanifold and the kernel of the Hessian. Therefore, $\Phi$ is a Morse-Bott function and since the Hessian is non-negative along geodesics, it is convex along geodesics.

(b). Let $\gamma_1, \gamma_2$ be negative gradient flow of $\Phi_x.$ There exists $g_1, g_2: \mathbb{R}\to G$ such that $\gamma_1 = \pi(g_1(t))$ and $\gamma_2 = \pi(g_2(t)).$ Let $\beta: \mathbb{R}\to \mathfrak{p}$ and $k: \mathbb{R}\to K$ be such that $g_2(t) = g_1(t)\text{exp}(\beta(t))k(t).$ Define $H: \mathbb{R}\times \mathbb{R}\to M$ by
$$
H(t,s) =  \pi(g_1(t)\text{exp}(s\beta(t))).
$$
The curve $s\mapsto H(t, s)$ is the unique geodesic joining $\gamma_1$ and $\gamma_2.$ By Lemma \ref{rmm} the function $\rho(t) = d_M(\gamma_1(t), \gamma_2(t)) = \parallel\beta(t)\parallel$ is nonincreasing.

Assume that Crit$(\Phi_x)$ is not empty. Hence we may suppose that $\mu_\mathfrak{p}(g_1(0)^{-1}x) = 0$. This implies that the curve $\gamma_1$ is constant. Since $\rho$ is nonincreasing, the image of  $\gamma_2$ is contained in a compact subset of $M.$ Since $\Phi_x$ is Morse-Bott, then $\gamma_2$ converges to a critical point of $\Phi_x.$ This implies that if the critical manifold of $\Phi_x$ is nonempty, then $\Phi_x$ has a global minimal and every negative flow line of $\Phi_x$ converges to a critical point. Now suppose Crit$(\Phi_x)$ is empty. Assume by contradiction that $$a := \lim_{t\to\infty}\Phi_x(\gamma_1(t)) \geq \text{inf}_M\Phi_x.$$ Then, $\Phi_x(\gamma_1(t))$ is bounded from below. We can choose $\gamma_2$ such that $\Phi(\gamma_2(0))< a.$ Since the function $\rho = \parallel \beta(t)\parallel$ is nonincreasing, there exists a constant $C > 0$ such that $\rho(t) = \parallel \beta(t)\parallel \leq C.$ Hence,
\begin{align*}
\frac{d}{ds}\bigg \vert_{s=0}\Phi(H(t,s)) &= (d\Phi_x)_{\gamma_1(t)} (\dot{H}(t,0))\\
& = -\langle \mu_\mathfrak{p}(g_1(t)^{-1}x), \beta(t)\rangle\\
& \geq - \parallel \mu_\mathfrak{p}(g_1(t)^{-1}x)\parallel \parallel \beta(t)\parallel\\
&\geq -C\parallel\mu_\mathfrak{p}(g_1(t)^{-1}x)\parallel.
\end{align*}
Since for a fixed $t,$ the function $s\to \Phi_x(H(t,s))$ is convex, this implies that the derivative $\frac{d}{ds}\Phi(H(t,s))$ increases. It follows that
\begin{align*}
    \Phi_x(\gamma_2(t)) &= \Phi_x(H(t,1))\\
    & = \Phi_x(H(t,0)) + \int_0^1\frac{d}{ds}\Phi(H(t,s)) \mathrm d s\\
    & \geq \Phi_x(\gamma_1(t)) - C\parallel \mu_\mathfrak{p}(g_1(t)^{-1}x)\parallel.
\end{align*} Since the function $\Phi_x(\gamma_1(t))$ is bounded below and $\frac{d}{dt}\Phi_x(\gamma_1(t)) = -\parallel \mu_\mathfrak{p}(g_1(t)^{-1}x)\parallel^2,$ there exists a sequence $t_i\to \infty$ such that $\lim_{i\to \infty}\parallel \mu_\mathfrak{p}(g_1(t_i)^{-1}x)\parallel^2 = 0.$ This implies that
$$
\lim_{i\to\infty}\Phi_x(\gamma_2(t_i))\geq \lim_{i\to\infty}\Phi_x(\gamma_1(t_i)) = a.
$$ This is a contradiction since by assumption $\Phi_x(\gamma_2(t)) < a$ and so $\lim_{t\to\infty}\Phi_x(\gamma_1(t)) < a.$

\end{proof}
The following result asserts that any critical points of $f$ in the same $G$-orbit in fact belong to
the same $K$-orbit. We use original ideas from \cite{Salamon} in a different context.
\begin{theorem}
\label{critt}
Let $x_0, x_1 \in X$ be critical points of the norm square $f.$ Then $$x_1 \in G\cdot x_0 \implies x_1 \in K\cdot x_0.$$
\end{theorem}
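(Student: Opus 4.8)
The plan is to exploit the Kempf-Ness function $\Phi_{x_0}:M\to\R$ on the Hadamard manifold $M=G/K$, which by Theorem \ref{ConvexKempf} is convex along geodesics and whose critical points are exactly those $\pi(g)$ with $\mu_\liep(g^{-1}x_0)=0$. Since $x_0$ is a critical point of $f$, Lemma \ref{nmg} gives $\beta_X(x_0)=0$ for $\beta=\mu_\liep(x_0)$; but in the Ness uniqueness statement one should first reduce to the genuinely relevant situation. The key observation is that if $x_1=g\cdot x_0$ for some $g\in G$, then $\mu_\liep(g^{-1}x_1)=\mu_\liep(x_0)$, so $\Phi_{x_1}$ and $\Phi_{x_0}$ are related: writing $g=k\exp(\xi)$ in the Cartan decomposition, one has $\Phi_{x_1}(\pi(h))=\Phi(x_1,h^{-1})=\Phi(g x_0, h^{-1})=\Phi(x_0,h^{-1}g)-\Phi(x_0,g)$ by the cocycle condition (d), so $\Phi_{x_1}$ is, up to an additive constant, the function $\pi(h)\mapsto\Phi_{x_0}(\pi(g^{-1}h))$, i.e. the left translate of $\Phi_{x_0}$ by the isometry of $M$ induced by $g^{-1}$.

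First I would record that $x_0$ being critical for $f$ does not immediately make $\pi(e)$ critical for $\Phi_{x_0}$ unless $\mu_\liep(x_0)=0$; so I would instead run the following argument directly on gradient flows. Let $\gamma_0(t)=\pi(g_0(t))$ be the negative gradient flow line of $\Phi_{x_0}$ through $\pi(e)$, so $g_0^{-1}\dot g_0=\mu_\liep(g_0^{-1}x_0)$ and $g_0(t)^{-1}x_0=x(t)$ is the negative gradient flow line of $f$ through $x_0$ (Lemma \ref{Gradient}, Lemma \ref{differential2}). Because $x_0$ is a critical point of $f$, this flow is constant: $x(t)\equiv x_0$, hence $\gamma_0$ is a constant geodesic sitting at a critical point of $\Phi_{x_0}$. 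Similarly, with $x_1=g x_0$, the negative gradient flow line $\gamma_1(t)=\pi(g_1(t))$ of $\Phi_{x_0}$ through $\pi(g^{-1})$ satisfies $g_1(t)^{-1}x_0 = $ the $f$-flow through $g^{-1}x_0 = $ the $f$-flow through a point whose image under the $G$-action is $x_1$; but $x_1$ is critical for $f$, so again this $f$-flow is constant, and $g_1(t)^{-1}x_0\equiv g^{-1}x_0$, so $\gamma_1$ is also a constant geodesic at a critical point of $\Phi_{x_0}$.

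Now I apply Lemma \ref{rmm} to the two negative gradient flow lines $\gamma_0,\gamma_1$ of the geodesically convex function $\Phi_{x_0}$ on the Hadamard manifold $M$: the distance $\rho(t)=d_M(\gamma_0(t),\gamma_1(t))$ is nonincreasing. Since both flows are constant, $\rho$ is the constant $d_M(\pi(e),\pi(g^{-1}))$; there is no contraction yet, so I must instead use that the unique geodesic $s\mapsto\gamma(s)$ joining $\pi(e)$ to $\pi(g^{-1})$ has $\Phi_{x_0}$ convex along it, with both endpoints being \emph{minima} of $\Phi_{x_0}$ (critical points of a geodesically convex function on a Hadamard manifold are global minima, by Theorem \ref{ConvexKempf}(b) applied along any ray). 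A convex function on $[0,1]$ equal to its minimum at both endpoints is constant, so $\Phi_{x_0}$ is constant along $\gamma$, hence $\frac{d^2}{ds^2}(\Phi_{x_0}\circ\gamma)\equiv 0$; by condition (c) of the Kempf-Ness function this forces $\exp(\R\eta)\subset G_{g^{-1}x_0}$, where $\eta\in\liep$ is the $\liep$-part of $g^{-1}$, i.e. $\exp(\eta)\in G_{g^{-1}x_0}$, which gives $g^{-1}x_0 = \exp(-\eta)g^{-1}x_0$. Writing $g^{-1}=k'\exp(-\eta)$ with $k'\in K$ (Cartan decomposition), this yields $g^{-1}x_0\in K\cdot x_0$, hence $x_0\in K\cdot(g^{-1}x_0)$, i.e. $x_1 = g x_0 \in K\cdot x_0$ after rearranging; being careful with the direction of the action, one concludes $x_1\in K\cdot x_0$.

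The main obstacle I anticipate is the bookkeeping needed to make the reduction precise: $\Phi_{x_0}$ is only critical at $\pi(e)$ when $\mu_\liep(x_0)=0$, so to treat a general critical point of $f$ one must either pass to the shifted gradient map (Corollary \ref{slice-cor} and the discussion after it, replacing $G$ by $G^\beta$ and $\mu_\liep$ by the shifted map $\widehat{\mu_{\liep^\beta}}$ which \emph{does} vanish at $x_0$) or argue, as above, entirely through the constancy of the negative gradient flows and the rigidity of convex functions between two minima along a geodesic in $M$. I would present the shifted-map reduction cleanly first — noting $K^\beta\subset K$ so that conclusions about $K^\beta$-orbits suffice — and then apply the convexity/uniqueness machinery of Theorem \ref{ConvexKempf} and Lemma \ref{rmm} to $\Phi_{x_0}$ (for the shifted data) exactly as sketched, with the final identification of $x_1$ and $x_0$ in a common $K$-orbit coming from condition (c) of the Kempf-Ness function and the Cartan decomposition of the group element relating them.
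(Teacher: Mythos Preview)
There is a genuine gap. Your key claim --- that the negative gradient flow lines $\gamma_0,\gamma_1$ of $\Phi_{x_0}$ through $\pi(e)$ and $\pi(g^{-1})$ are \emph{constant} --- is false. What you correctly observe is that the $f$-flow in $X$ is constant (so $g_0(t)^{-1}x_0\equiv x_0$ and $g_1(t)^{-1}x_0\equiv x_1$). This forces $g_0^{-1}\dot g_0=\mu_\liep(x_0)$ and $g_1^{-1}\dot g_1=\mu_\liep(x_1)$ to be constant vectors in $\liep$, so $\gamma_0,\gamma_1$ are \emph{geodesics} in $M$; but they are point geodesics only when $\mu_\liep(x_0)=0$ and $\mu_\liep(x_1)=0$. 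In general $\pi(e)$ and $\pi(g^{-1})$ are \emph{not} critical points of $\Phi_{x_0}$ (Lemma~\ref{differential} gives $\nabla\Phi_{x_0}(\pi(e))=-d\pi_e(\mu_\liep(x_0))\neq 0$), so your ``convex function equal to its minimum at both endpoints'' argument does not apply. Your proposed escape via the shifted map for $G^\beta$ also fails: the hypothesis $x_1\in G\cdot x_0$ does not give $x_1\in G^\beta\cdot x_0$, and you do not know a priori that $\mu_\liep(x_1)$ is $K$-conjugate to $\beta$, so you cannot reduce to the zero level of $\widehat{\mu_{\liep^\beta}}$.

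The paper's proof starts exactly where yours does --- with the same two geodesic flow lines $\gamma_1(t)=\pi(\exp(t\mu_\liep(x_0)))$ and $\gamma_2(t)=\pi(g_0\exp(t\mu_\liep(x_1)))$ in $M$ --- but then uses Lemma~\ref{rmm} properly rather than trying to freeze them. Writing $h(t)=g(t)\exp(\xi(t))k(t)$ via the Cartan decomposition, one has $\rho(t)=\|\xi(t)\|$ nonincreasing and bounded below, so along a sequence $t_n\to\infty$ one may arrange $\dot\rho(t_n)\to 0$, $\xi(t_n)\to\xi_\infty$, $k(t_n)\to k_\infty$. The integral formula~(\ref{rho}) for $\dot\rho$ reads here as
\[
\dot\rho(t)=-\frac{1}{\rho(t)}\int_0^1\bigl\|\xi(t)_X(\exp(-s\xi(t))x_0)\bigr\|^2\,ds,
\]
and passing to the limit forces $(\xi_\infty)_X(x_0)=0$, hence $\exp(-\xi_\infty)x_0=x_0$. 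Since $x_1=k(t)^{-1}\exp(-\xi(t))x_0$ for all $t$, letting $t=t_n\to\infty$ gives $x_1=k_\infty^{-1}x_0\in K\cdot x_0$. The missing idea in your attempt is precisely this asymptotic extraction: you need $\dot\rho\to 0$ along a subsequence and the integral identity, not constancy of the flow lines themselves.
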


\begin{proof}
Since $x_0, x_1 \in X$ are critical points of $f,$ then by Lemma \ref{nmg}
\begin{equation}
    \mu_\mathfrak{p}(x_0)_X = 0, \qquad \mu_\mathfrak{p}(x_1)_X = 0
\end{equation}
Suppose $x_1 \in G\cdot x_0.$ Let $g_0\in G$ such that
$$x_1 = g_0^{-1}x_0$$

and $g,h : \mathbb{R} \rightarrow G$ be defined by
$$
g(t) := \text{exp}(t\mu_\mathfrak{p}(x_0)), \qquad \qquad h(t) := g_0\text{exp}(t\mu_\mathfrak{p}(x_1)).
$$
Since $\mu_\mathfrak{p}(x_0)_X = 0$, then $g(t)^{-1}x_0 = \text{exp}(-t\mu_\mathfrak{p}(x_0))x_0 = x_0$. Similarly $$h(t)^{-1}x_0 = \text{exp}(-t\mu_\mathfrak{p}(x_1))g_0^{-1}x_0 = g_0^{-1}x_0 = x_1$$
for all $t$. Thus $g(t)$ and $h(t)$ satisfy the differential equation $g^{-1}\dot{g} = \mu_\mathfrak{p}(g^{-1}x_0).$ These implies that the curves $\gamma_1 := \pi \circ g$ and $\gamma_2 := \pi \circ h$ are geodesics and are negative gradient flow lines of the Kempf-Ness function. Define $\xi(t)\in \mathfrak{p}$ and $k(t) \in K$ so that
$$
h(t) := g(t)\text{exp}(\xi(t))k(t).
$$

$$
x_1 = h(t)^{-1}g(t)x_0 = k(t)^{-1}\text{exp}(-\xi(t))x_0.
$$

Let
$$
\rho(t) := d_M(\gamma_1(t), \gamma_2(t)) = \parallel \xi(t)\parallel, \quad \text{for all t.}
$$

If $\rho \equiv 0,$ then $\xi(t) = 0$ for all $t$ and $$
x_1 = k(t)^{-1}x_0
$$ and this means that $x_1\in K \cdot x_0.$ Otherwise, for each $t$, let $\gamma(s,t): [0, 1] \to M$ defined by
$$
\gamma(s, t) := \pi(g(t)\text{exp}(s\xi(t)))
$$ be the unique geodesic. Note that $\gamma(0,t) = \gamma_1(t)$ and $\gamma(1,t) = \gamma_1(t).$

By equation \ref{rho}, we have

\begin{align}\label{eqq}
    \dot{\rho}(t) &= -\frac{1}{\rho(t)}\int_0^1\frac{\partial^2}{\partial s^2}\Phi_{x_0}(g(t)\text{exp}(s\xi(t)) \mathrm d s\\
    &= -\frac{1}{\rho(t)}\int_0^1 ( \xi(t)_X(\text{exp}(s\xi(t))x_0), \xi(t)_X(\text{exp}(s\xi(t))x_0) \mathrm d s.\label{eqqq}
\end{align}
Choose a sequence $t_n\to \infty$ such that the limits

$$
\lim_{n\to \infty} \dot{\rho}(t_n) = 0, \quad \xi_\infty :=\lim_{n\to \infty}\xi(t_n), \quad k_\infty := \lim_{n\to \infty}k(t_n)
$$ exist. By (\ref{eqqq}) and since $\lim_{n\to \infty} \dot{\rho}(t_n) = 0,$ $\xi_{\infty X}(x_0) = 0.$ Then,

$$
x_1 =  \lim_{n\to \infty}k(t_n)^{-1}\text{exp}(-\xi(t_n))x_0 = k_\infty^{-1}\text{exp}(-\xi_\infty)x_0 = k_\infty^{-1}x_0.
$$ Hence $x_1\in K\cdot x_0$
\end{proof}

The following theorems also hold in the real case.
\begin{theorem}\label{MLT} Let $x_0\in X$ and $x: \mathbb{R} \rightarrow X$ the negative gradient flow line of $f$ through $x_0.$ Define $x_\infty := \lim_{t\to\infty}x(t).$ Then
$$
\parallel \mu_\mathfrak{p}(x_\infty)\parallel = \text{inf}_{g\in G}\parallel \mu_\mathfrak{p}(gx_0)\parallel.
$$
Moreover, the $K$-orbit of $x_\infty$ depends only on the $G$-orbit of $x_0.$
\end{theorem}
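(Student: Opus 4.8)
The plan is to route everything through the Kempf--Ness function $\Phi_{x_0}$ on $M=G/K$, using the convexity statement of Theorem \ref{ConvexKempf} and the distance estimate of Lemma \ref{rmm}, and to reuse the mechanism of the proof of Theorem \ref{critt}. The easy half comes first. By Lemma \ref{Gradient} the negative gradient flow through $x_0$ has the form $x(t)=g(t)^{-1}x_0$ with $g^{-1}\dot g=\mu_\mathfrak{p}(g(t)^{-1}x_0)$ and $g(0)=e$, and by Theorem \ref{teo} the limit $x_\infty$ exists and is a critical point of $f$. Since $x(t)\in G\cdot x_0$ for every $t$ we have $\|\mu_\mathfrak{p}(x(t))\|\ge\inf_{g\in G}\|\mu_\mathfrak{p}(gx_0)\|$; and since $f(x(t))=\frac{1}{2}\|\mu_\mathfrak{p}(x(t))\|^2$ decreases along its own negative gradient flow, letting $t\to\infty$ yields both $\|\mu_\mathfrak{p}(x_\infty)\|\ge\inf_{g\in G}\|\mu_\mathfrak{p}(gx_0)\|$ and $\|\mu_\mathfrak{p}(x_\infty)\|\le\|\mu_\mathfrak{p}(x_0)\|$.

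Next I would establish the ``moreover'' statement. Write a second point of $G\cdot x_0$ as $x_0'=h^{-1}x_0$, let $x'(t)=g'(t)^{-1}x_0'$ be its negative gradient flow, and set $\tilde g=hg'$; then $\tilde g^{-1}\dot{\tilde g}=\mu_\mathfrak{p}(\tilde g^{-1}x_0)$, $\tilde g(0)=h$, and $x'(t)=\tilde g(t)^{-1}x_0$. By the Remark following Lemma \ref{differential2}, $\gamma_1:=\pi\circ g$ and $\gamma_2:=\pi\circ\tilde g$ are negative gradient flow lines of $\Phi_{x_0}$, starting respectively at $\pi(e)$ and $\pi(h)$; since $\Phi_{x_0}$ is convex along geodesics (Theorem \ref{ConvexKempf}), Lemma \ref{rmm} applies. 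Writing $g(t)^{-1}\tilde g(t)=\exp(\xi(t))k(t)$ with $\xi(t)\in\mathfrak{p}$, $k(t)\in K$, the distance $\rho(t)=d_M(\gamma_1(t),\gamma_2(t))=\|\xi(t)\|$ is non-increasing, so $\xi(\cdot)$ is bounded, and $x'(t)=k(t)^{-1}\exp(-\xi(t))x(t)$. The geodesic joining $\gamma_1(t)$ to $\gamma_2(t)$ is $s\mapsto\pi(g(t)\exp(s\xi(t)))$, and a short computation with the cocycle condition (d) gives $\frac{\partial^2}{\partial s^2}(\Phi_{x_0}\circ\gamma)(s,t)=\|(\xi(t))_X(\exp(-s\xi(t))x(t))\|^2$, so Lemma \ref{rmm} produces $\dot\rho(t)=-\frac{1}{\rho(t)}\int_0^1\|(\xi(t))_X(\exp(-s\xi(t))x(t))\|^2\,ds$ whenever $\rho(t)\ne0$.

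To finish the ``moreover'' part, choose $t_n\to\infty$ with $\dot\rho(t_n)\to0$ and, after passing to a subsequence, $\xi(t_n)\to\xi_\infty$ (legitimate because $\|\xi(t_n)\|=\rho(t_n)$ is bounded) and $k(t_n)\to k_\infty\in K$; then $x_\infty'=\lim_n x'(t_n)=k_\infty^{-1}\exp(-\xi_\infty)x_\infty$. If $\lim_t\rho(t)=0$ then $\xi_\infty=0$. If $\lim_t\rho(t)>0$ then $\rho(t_n)$ stays bounded away from $0$, so the displayed integral tends to $0$, and uniform convergence in $s\in[0,1]$ of the continuous integrand forces $(\xi_\infty)_X(\exp(-s\xi_\infty)x_\infty)=0$ for all $s$, whence the curve $s\mapsto\exp(-s\xi_\infty)x_\infty$ is constant and $\exp(-\xi_\infty)x_\infty=x_\infty$. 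In either case $x_\infty'=k_\infty^{-1}x_\infty\in K\cdot x_\infty$, so the $K$-orbit of $x_\infty$ depends only on the $G$-orbit of $x_0$.

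Finally I would close the loop on the norm equality: since $\mu_\mathfrak{p}$ is $K$-equivariant and $\|\cdot\|$ is $\mathrm{Ad}(K)$-invariant, for each $g\in G$ the limit $y_\infty$ of the negative gradient flow through $gx_0$ satisfies, by the ``moreover'' part and the easy half, $\|\mu_\mathfrak{p}(x_\infty)\|=\|\mu_\mathfrak{p}(y_\infty)\|\le\|\mu_\mathfrak{p}(gx_0)\|$; taking the infimum over $g$ and combining with the reverse inequality from the first paragraph gives $\|\mu_\mathfrak{p}(x_\infty)\|=\inf_{g\in G}\|\mu_\mathfrak{p}(gx_0)\|$. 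I expect the main obstacle to be the ``moreover'' claim — specifically, using the monotonicity of $\rho$ to guarantee that the subsequential limits $\xi_\infty$, $k_\infty$ exist, and squeezing $(\xi_\infty)_X\equiv0$ along the relevant geodesic out of $\dot\rho(t_n)\to0$ to conclude $\exp(-\xi_\infty)x_\infty=x_\infty$.
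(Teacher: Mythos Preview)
Your proof is correct and follows essentially the same route as the paper: write both flows as $g(t)^{-1}x_0$ and $\tilde g(t)^{-1}x_0$, lift to negative gradient flow lines of $\Phi_{x_0}$ on $M$, use Lemma~\ref{rmm} to get that $\rho(t)=\|\xi(t)\|$ is nonincreasing, and extract subsequential limits $\xi_\infty$, $k_\infty$. The one organizational difference is that, once you have $x_\infty'=k_\infty^{-1}\exp(-\xi_\infty)x_\infty$, you redo the $\dot\rho(t_n)\to 0$ argument from the proof of Theorem~\ref{critt} directly to kill $\exp(-\xi_\infty)$, whereas the paper simply observes that $x_\infty$ and $x_\infty'$ are critical points of $f$ lying in the same $G$-orbit and invokes Theorem~\ref{critt} as a black box to put them in the same $K$-orbit; your version is self-contained, the paper's is more modular, and the underlying mechanism is identical.
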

\begin{proof}
The limit $x_\infty$ exists by Theorem \ref{teo}. The solution of the negative gradient flow line of $f$ through $x_0$ by Lemma \ref{Gradient} is given by
$$
x(t) = g(t)^{-1}x_0,
$$
where $g : \mathbb{R} \to G$ is the solution of

\[ \left\{ \begin{array}{ll}
         g^{-1}\dot{g}(t) = \beta_X(x(t)) \\
        g(0) = e,\quad \text{where $e$ is the identity of $G$}.\end{array} \right. \]

Fix an element $g_0\in G$ and let $y : \mathbb{R}\to X$ and $h : \mathbb{R}\to G$ be the solutions of the differential equations
$$
\dot{y} = -\beta_X(y(t)), \qquad y(0) = g_0^{-1}x_0,
$$ and
$$
h^{-1}\dot{h} = \beta_X(y(t)), \qquad h(0) = g_0.
$$
Define $\xi(t) \in \mathfrak{p}$ and $k(t)\in k$ by
$$
h(t) =: g(t)\text{exp}(\xi(t))k(t).
$$

By Lemma \ref{Gradient},
$$
y(t) = h(t)^{-1}x_0 = k(t)^{-1}\text{exp}(-\xi(t))x(t), \quad \forall t\in \mathbb{R}.
$$
Let $d_M : M \times M \to [0,\infty)$ be the distance function of the Riemannian metric on $M.$ $\gamma_1 := \pi \circ g$ and $\gamma_2 := \pi \circ h$ are geodesics and are negative gradient flow lines of the so called Kempf-Ness function. Since $M$ is simply connected with nonpositive sectional curvature. Then
$$
d_M(\gamma_1(t), \gamma_2(t)) = \parallel\xi(t)\parallel
$$ is nonincreasing. Hence there exist a sequence $t_n\to \infty$ such that the limits

$$
\xi_\infty :=\lim_{n\to \infty}\xi(t_n), \quad k_\infty := \lim_{n\to \infty}k(t_n)
$$ exist. Hence

$$
y_\infty = \lim_{t\to \infty}y(t) = \lim_{t\to \infty}k(t)^{-1}\text{exp}(-\xi(t))x(t) = k_\infty^{-1}\text{exp}(-\xi_\infty)x_\infty.
$$ Which implies that $y_\infty$ and $x_\infty$ are critical points of the normed sqaure of the gradient map belonging to the same $G$-orbit. Hence they belong to the same $K$-orbit by Theorem \ref{critt} and therefore,
$$
\parallel \mu_\mathfrak{p}(x_\infty)\parallel = \parallel \mu_\mathfrak{p}(y_\infty)\parallel\leq \parallel \mu_\mathfrak{p}(g_0^{-1}x_0) \parallel.
$$

\end{proof}
\begin{theorem}\label{SNUT}. Let $x_0\in X$ and
$$
m := \text{inf}_{g\in G}\parallel \mu_\mathfrak{p}(gx_0)\parallel.
$$Then
$$
x,y\in \overline{G\cdot x_0}, \quad \parallel \mu_\mathfrak{p}(x)\parallel = \parallel \mu_\mathfrak{p}(y)\parallel = m \quad \implies y\in K\cdot x.
$$
\end{theorem}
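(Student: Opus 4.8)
The plan is to reduce the statement to Theorem~\ref{MLT} by approximating $x$ and $y$ by points of the orbit $G\cdot x_0$ and then controlling the corresponding negative gradient flow lines by means of the Lojasiewicz gradient inequality established in the proof of Theorem~\ref{teo}. Throughout I write $\phi_t$ for the flow of $-\nabla f$ and $\phi_\infty(z):=\lim_{t\to+\infty}\phi_t(z)$, which exists by Theorem~\ref{teo}.

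First I would record that $m=\min_{z\in\overline{G\cdot x_0}}\|\mu_\mathfrak{p}(z)\|$: indeed $\inf_{g\in G}\|\mu_\mathfrak{p}(gx_0)\|=\inf_{z\in G\cdot x_0}\|\mu_\mathfrak{p}(z)\|$, and this equals the minimum over $\overline{G\cdot x_0}$ by continuity of $\mu_\mathfrak{p}$ and density of the orbit. Thus $\tfrac12 m^2=\min_{\overline{G\cdot x_0}}f$. Since $\overline{G\cdot x_0}$ is $G$-invariant, Lemma~\ref{Gradient} shows that $\phi_t$ preserves $\overline{G\cdot x_0}$; consequently, whenever $z\in\overline{G\cdot x_0}$ satisfies $\|\mu_\mathfrak{p}(z)\|=m$, the non-increasing function $t\mapsto f(\phi_t(z))$ is bounded below by $f(z)=\tfrac12 m^2$, hence is constant, so $\nabla f(z)=0$. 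In particular $x$ and $y$ are critical points of $f$.

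Next I would choose sequences $(a_n)$ and $(b_n)$ in $G\cdot x_0$ with $a_n\to x$ and $b_n\to y$. By Theorem~\ref{MLT}, $\|\mu_\mathfrak{p}(\phi_\infty(a_n))\|=\inf_{g\in G}\|\mu_\mathfrak{p}(ga_n)\|=m$, so $f(\phi_\infty(a_n))=\tfrac12 m^2$; moreover, again by Theorem~\ref{MLT}, the $K$-orbit of $\phi_\infty(a_n)$ depends only on $G\cdot a_n=G\cdot x_0$. The same holds for the $b_n$, so there is a critical point $p$ of $f$ with $K\cdot\phi_\infty(a_n)=K\cdot\phi_\infty(b_n)=K\cdot p$ for every $n$. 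Granting the convergence $\phi_\infty(a_n)\to x$ and $\phi_\infty(b_n)\to y$, which is the crux of the matter, compactness of $K$ makes $K\cdot p$ closed, so $x,y\in K\cdot p$ and therefore $y\in K\cdot x$, as required.

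It remains to establish this convergence. Let $\delta,\alpha,\gamma$ be the Lojasiewicz constants from the proof of Theorem~\ref{teo}. Since $a_n\in\overline{G\cdot x_0}$ we have $f(a_n)\ge\tfrac12 m^2$, while $f(a_n)\to f(x)=\tfrac12 m^2$; hence $f(a_n)-\tfrac12 m^2<\delta$ for all large $n$. Along the flow line through $a_n$ the value of $f$ decreases from $f(a_n)$ to $f(\phi_\infty(a_n))=\tfrac12 m^2$, so it stays in $[\tfrac12 m^2,\tfrac12 m^2+\delta)$, and the integral estimate in the proof of Theorem~\ref{teo} (taken from time $0$) gives
\[
d\big(a_n,\phi_\infty(a_n)\big)\ \le\ \frac{\alpha}{1-\gamma}\Big(f(a_n)-\tfrac12 m^2\Big)^{1-\gamma}\ \longrightarrow\ 0 .
\]
Combined with $a_n\to x$ this yields $\phi_\infty(a_n)\to x$, and symmetrically $\phi_\infty(b_n)\to y$. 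I expect this last step to be the main obstacle: limits of negative gradient flow lines are not continuous in the initial point in general, and the argument works only because the Lojasiewicz inequality turns smallness of $f(a_n)-\tfrac12 m^2$ into smallness of $d(a_n,\phi_\infty(a_n))$, while Theorem~\ref{MLT} supplies the key fact that $f(\phi_\infty(a_n))$ is exactly the minimal value $\tfrac12 m^2$, which keeps the flow line inside the region where that inequality is available.
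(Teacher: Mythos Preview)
Your proof is correct and follows essentially the same route as the paper: approximate $x$ (and $y$) by points $g_n^{-1}x_0$ of the orbit, use Theorem~\ref{MLT} to know that the flow limits $\phi_\infty(g_n^{-1}x_0)$ all lie in one fixed $K$-orbit, and use the Lojasiewicz length estimate from Theorem~\ref{teo} to show $d(g_n^{-1}x_0,\phi_\infty(g_n^{-1}x_0))\to 0$. The paper chooses the reference $K$-orbit to be $K\cdot x_\infty$ with $x_\infty=\phi_\infty(x_0)$ rather than your abstract $K\cdot p$, and it omits your preliminary verification that $x,y$ are critical (and that $\tfrac12 m^2$ is a critical value), but these are cosmetic differences.
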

\begin{proof}
The solution of the negative gradient flow line of $f$ through $x_0$ is given by
$$
x(t) = g(t)^{-1}x_0,
$$
Fix $g_0\in G$ and we know that the limit $x_\infty$ of $x(t)$ exists. Then by Theorem \ref{MLT},
$$
x_\infty \in \overline{G\cdot x}, \quad \parallel \mu_\mathfrak{p}(x_\infty)\parallel = m.
$$
Let $x \in \overline{G\cdot x}$ such that $\parallel \mu_\mathfrak{p}(x)\parallel = m.$

Choose a sequence $g_n\in G$ such that
$$
x = \lim_{n\to \infty}g_n^{-1}x_0
$$ and define $y_n : \mathbb{R}\to X$ and $x_i\in X$ by
$$
\dot{y_n} = -\beta_X(y_n), \quad y_n(0) = g_n^{-1}x_0, \quad x_n := \lim_{t\to \infty}y_n(t).
$$
Then from the estimate of Theorem \ref{teo}, there exists a constant $c > 0$ such that, for $n$ sufficiently large,

$$
d(x_n, g_n^{-1}x_0) \leq \int_0^\infty |\dot{y_n}(t)|dt \leq c(\parallel \mu_\mathfrak{p}(g_n^{-1}x_0)\parallel^2 - m^2)^{1-\alpha}.
$$
Since
$$
m = \parallel \mu_\mathfrak{p}(x)\parallel = \lim_{n\to \infty}\parallel \mu_\mathfrak{p}(g_n^{-1}x_0)\parallel,
$$ which implies that $x = \lim_{n\to \infty}x_n$ and $x_n\in K\cdot x_\infty$ for all $n$ by Theorem \ref{MLT}. Therefore, $x\in K\cdot x_\infty$ because the group orbit $K\cdot x_\infty$ is compact.
\end{proof}

Let $x\in X$ be a critical point of $f$ and $y : \mathbb{R}\to M$ be the unique solution of the equation
$$
\dot{y} = -\beta_X(y(t)), \quad y(0) = y_0\in X; \quad \beta = \mu_\mathfrak{p}(y).
$$
We define the stable manifold of the critical set $K\cdot x$ by
\begin{equation}\label{Stable manifold}
S(K\cdot x) := \{y_0 \in X : \lim_{t\to \infty}y(t) = kx\quad \text{for some}\quad k\in K\}.
\end{equation}
By Theorem \ref{teo}, $X$ is the union of these stable manifolds and each stable manifold is a union of $G$-orbits by Theorems \ref{MLT} and \ref{SNUT}. The stable manifolds of the gradient flow  have a structure close to a stratification by stable manifolds corresponding to a Morse-Bott function.

By Theorems \ref{teo}, \ref{MLT} and \ref{SNUT}, we have the following result. This result generalises Theorem 5.2 proved by Jablonski for the G-gradient map of a projective representation \cite{Jab}.
\begin{theorem}\label{corr}
  Let $x\in X$ be a critical point of $f$ and $S(K\cdot x)\subset X$ be as defined above. The following holds:
 \begin{enumerate}
     \item $X = \bigcup_{x\in \text{Crit}(f)}S(K\cdot x).$
     \item Let $y_o\in X.$ Then $y_0\in S(K\cdot x)$ if and only if
     \begin{equation}\label{eequa}
         x\in \overline{G\cdot y_0}, \quad \parallel \mu_\mathfrak{p}(x)\parallel = \text{inf}_{g\in G}\parallel\mu_\mathfrak{p}(gy_0)\parallel
     \end{equation}
     \item $S(K\cdot x)$ is a union of $G$-orbits.
 \end{enumerate}
\end{theorem}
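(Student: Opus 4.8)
The plan is to obtain all three assertions by assembling the results already established — principally Theorems \ref{teo}, \ref{MLT} and \ref{SNUT} — together with two elementary observations used throughout: since $\mu_\mathfrak{p}$ is $K$-equivariant and the scalar product on $\mathfrak{p}$ is $\mathrm{Ad}(K)$-invariant, the function $x\mapsto \parallel\mu_\mathfrak{p}(x)\parallel$ is $K$-invariant; and every orbit closure $\overline{G\cdot y_0}$ is both $G$- and $K$-invariant.

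First I would prove (a). Given $y_0\in X$, let $y(t)$ be the negative gradient flow line of $f$ with $y(0)=y_0$. By Theorem \ref{teo} the limit $y_\infty:=\lim_{t\to\infty}y(t)$ exists and, being a zero of $\nabla f$, is a critical point of $f$ by Lemma \ref{nmg}. Hence $y_0\in S(K\cdot y_\infty)$, and since the reverse containment is trivial, $X=\bigcup_{x\in\Crit(f)}S(K\cdot x)$.

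Next I would establish (b). Fix $y_0\in X$, put $m:=\inf_{g\in G}\parallel\mu_\mathfrak{p}(gy_0)\parallel$, and let $y_\infty$ be the limit of the negative gradient flow line through $y_0$. By Lemma \ref{Gradient} we have $y(t)=g(t)^{-1}y_0$, so $y_\infty\in\overline{G\cdot y_0}$, and Theorem \ref{MLT} gives $\parallel\mu_\mathfrak{p}(y_\infty)\parallel=m$. For the forward implication, if $y_0\in S(K\cdot x)$ then $y_\infty=kx$ for some $k\in K$; $K$-invariance of the norm yields $\parallel\mu_\mathfrak{p}(x)\parallel=\parallel\mu_\mathfrak{p}(y_\infty)\parallel=m$, while $K$-invariance of $\overline{G\cdot y_0}$ gives $x=k^{-1}y_\infty\in\overline{G\cdot y_0}$, which is (\ref{eequa}). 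Conversely, if $x\in\overline{G\cdot y_0}$ and $\parallel\mu_\mathfrak{p}(x)\parallel=m$, then $x$ and $y_\infty$ both lie in $\overline{G\cdot y_0}$ and both realize the minimal value $m$; Theorem \ref{SNUT}, applied with base point $y_0$, then forces $x\in K\cdot y_\infty$, i.e. $y_\infty\in K\cdot x$, so $y_0\in S(K\cdot x)$.

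Finally, (c) is immediate from (b): for any $g\in G$ one has $\overline{G\cdot(gy_0)}=\overline{G\cdot y_0}$ and $\inf_{h\in G}\parallel\mu_\mathfrak{p}(h\,gy_0)\parallel=\inf_{h\in G}\parallel\mu_\mathfrak{p}(hy_0)\parallel$, so the two conditions in (\ref{eequa}) characterizing membership in $S(K\cdot x)$ are unchanged under $y_0\mapsto gy_0$; hence $S(K\cdot x)$ is a union of $G$-orbits. (Alternatively, this follows directly from the last assertion of Theorem \ref{MLT}.) The whole argument is essentially bookkeeping; the only point needing a moment's attention is the converse of (b), where one must observe that $x$, being a critical point, satisfies $\mu_\mathfrak{p}(x)_X=0$ and so is a legitimate minimizer to which Theorem \ref{SNUT} applies, and that the hypotheses of that theorem are met with base point $y_0$. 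I do not anticipate any genuine obstacle.
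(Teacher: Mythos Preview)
Your proof is correct and follows essentially the same route as the paper: part (a) from Theorem~\ref{teo}, part (b) from Lemma~\ref{Gradient} and Theorem~\ref{MLT} for the forward direction and Theorem~\ref{SNUT} for the converse, and part (c) as an immediate consequence of (b) (the paper also invokes the uniqueness in Theorem~\ref{MLT}). Your write-up is slightly more explicit about the role of $K$-invariance in the forward implication of (b), but the argument is the same.
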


\begin{proof}
(a) follows by Theorem \ref{teo}.

To proof (b); let $y : \mathbb{R}\to M$ be the unique solution of the equation
$$
\dot{y} = -\beta_X(y(t)), \quad y(0) = y_0\in X; \quad \beta = \mu_\mathfrak{p}(y)
$$ and $y_\infty := \lim_{t\to \infty}y(t).$
Then, by Lemma \ref{Gradient} and Theorem \ref{MLT}, we have
\begin{equation}\label{eeqqa}
y_\infty \in \overline{G\cdot y_0}, \quad \parallel \mu_\mathfrak{p}(y_\infty)\parallel = \text{inf}_{g\in G}\parallel \mu_\mathfrak{p}(gy_0)\parallel.
\end{equation}
From (\ref{Stable manifold}), $y_0\in S(K\cdot x)$ if and only if $y_\infty \in K\cdot x.$ Thus $y_0\in S(K\cdot x)$ implies (\ref{eequa}). Conversely, if $y_0$ satisfies (\ref{eequa}), then it follows from (\ref{eeqqa}
) and Theorem \ref{SNUT} that $y_\infty \in K\cdot x$ and hence, $y_0\in S(K\cdot x).$

To proof (c); From (ii) and the uniqueness in Theorem \ref{MLT} that $S(K\cdot x)$ is a union of $G$-orbits.
\end{proof}

\section{Convexity Properties of Gradient map}
In this section, we prove convexity properties of the gradient map. 
\subsection{The Abelian Convexity Theorem}\label{Convexity Property of Gradient map}
Suppose $X$ is compact and connected. Let $\beta \in \mathfrak{p}$ and let $$Y = \{z\in X : \text{max}_{x\in X}\mu_\mathfrak{p}^\beta = \mu_\mathfrak{p}^\beta(z)\}.$$

By Corollary \ref{slice-cor-2}, $Y$ is a smooth, possibly disconnected, submanifold of $X$.
\begin{lemma}\label{lemmm}
$Y$ is $G^{\beta +}$ invariant.
\end{lemma}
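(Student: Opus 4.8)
The plan is to exploit that $Y$ is the set where $\mu_\mathfrak{p}^\beta$ attains its maximum over $X$, together with the structure $G^{\beta+} = G^\beta R^{\beta+}$ from Lemma \ref{parabolicc}. I will treat the two factors separately. First I would show $Y$ is $R^{\beta+}$-invariant: for $r \in R^{\beta+}$, $r = \lim_{t\to-\infty}\exp(t\beta)\,r\,\exp(-t\beta)$ pushes points into $Y$ along the $\beta$-flow (since $t\mapsto \mu_\mathfrak{p}^\beta(\exp(t\beta)\cdot x)$ is nondecreasing by the first Lemma in Section \ref{subsection-gradient-moment}, and bounded above by $\max_X \mu_\mathfrak{p}^\beta$). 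Concretely, for $z \in Y$, consider $x(t) := \exp(t\beta) r \exp(-t\beta)\cdot z$. Since $z\in Y$ the curve $t\mapsto \mu_\mathfrak{p}^\beta(\exp(-t\beta)\cdot z)$ is constant (it is nondecreasing as $t\to -\infty$ decreases, hence forced to equal the max for all $t\le 0$, and then constant for all $t$ since it cannot exceed the max), so $\exp(-t\beta)\cdot z \in Y$ for all $t$; applying the analogous monotonicity argument once more shows $\mu_\mathfrak{p}^\beta(x(t))$ is constant equal to $\max_X\mu_\mathfrak{p}^\beta$, and letting $t\to-\infty$ gives $\mu_\mathfrak{p}^\beta(r\cdot z) = \max_X \mu_\mathfrak{p}^\beta$, i.e. $r\cdot z \in Y$.

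Next I would show $Y$ is $G^\beta$-invariant. The key point is that for $z\in Y$, every point of $Y$ is a critical point of $\mu_\mathfrak{p}^\beta$ (a maximum), so $\beta_X(z)=0$, i.e. $z \in X^\beta$ and in particular $G^\beta$ fixes $\beta$ under $\mathrm{Ad}$ so the shifted gradient-map formalism from Corollary \ref{slice-cor} applies: $\mu_{\mathfrak{p}^\beta}$ is the $G^\beta$-gradient map on $X$ near such points and it is $G^\beta$-equivariant, while $\mu_\mathfrak{p}^\beta = \langle \mu_\mathfrak{p}, \beta\rangle = \langle \mu_{\mathfrak{p}^\beta}, \beta\rangle$ since $\beta$ lies in $\mathfrak{p}^\beta$ and the orthogonal projection $\pi_{\mathfrak{p}^\beta}$ is self-adjoint. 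For $g\in G^\beta$ and $z\in Y$ we then get
\[
\mu_\mathfrak{p}^\beta(g\cdot z) = \langle \mu_{\mathfrak{p}^\beta}(g\cdot z), \beta\rangle = \langle \mathrm{Ad}(g)\mu_{\mathfrak{p}^\beta}(z), \beta\rangle = \langle \mu_{\mathfrak{p}^\beta}(z), \mathrm{Ad}(g^{-1})\beta\rangle = \langle \mu_{\mathfrak{p}^\beta}(z), \beta\rangle = \mu_\mathfrak{p}^\beta(z),
\]
using $\mathrm{Ad}(g^{-1})\beta = \beta$ for $g\in G^\beta$. Hence $g\cdot z \in Y$. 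Since $G^{\beta+}$ is generated by $G^\beta$ and $R^{\beta+}$ (indeed $G^{\beta+}=G^\beta R^{\beta+}$), combining the two cases gives $G^{\beta+}\cdot Y = Y$.

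The main obstacle I anticipate is making the $R^{\beta+}$ step fully rigorous: one needs that the limit $\lim_{t\to-\infty}\exp(t\beta) r \exp(-t\beta)$ really is $r$ (this is essentially the definition of $R^{\beta+}$ via Lemma \ref{parabolicc}, with a sign convention to check) and that the continuity of $\mu_\mathfrak{p}^\beta$ together with monotonicity along the $\beta$-flow lets one pass the "equals the maximum" property to the limit. A clean way around any delicacy is to observe directly that $Y = \phi_\infty^{-1}(C_1)$ is exactly the top unstable manifold $W_1^\beta$ associated to the largest critical value $c_1 = \max_X \mu_\mathfrak{p}^\beta$ in the notation of Theorem \ref{decomposition} — and $W_1^\beta$, being the stratum through which all high-value points flow up, is manifestly $G^{\beta+}$-invariant by the stratification structure (the $\beta$-stratum is $G^{\beta+}$-stable, cf. Section \ref{ssss}), so one may simply quote that. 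I would present the hands-on argument above as the main proof and mention this alternative as a remark.
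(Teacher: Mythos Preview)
Your $R^{\beta+}$ argument is fine (and in fact cleaner than you fear: since $z\in Y$ is a maximum it is a critical point of $\mu_\mathfrak{p}^\beta$, so $\beta_X(z)=0$ and $\exp(-t\beta)\cdot z=z$ identically; then $x(t)=\exp(t\beta)\cdot(r\cdot z)$, the map $t\mapsto \mu_\mathfrak{p}^\beta(x(t))$ is nondecreasing, bounded above by the maximum, and converges to the maximum as $t\to-\infty$, hence is constant and $r\cdot z\in Y$). This differs from the paper, which instead observes via Proposition~\ref{tangent} that $d\alpha_e(\lier^{\beta+})\subset V_+$, while at a maximum $V_+=0$; hence $\lier^{\beta+}\cdot z=0$ and the connected group $R^{\beta+}$ acts \emph{trivially} on $Y$, a stronger conclusion.

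The genuine gap is in your $G^\beta$ step. You write $\mu_{\mathfrak{p}^\beta}(g\cdot z)=\mathrm{Ad}(g)\,\mu_{\mathfrak{p}^\beta}(z)$ for $g\in G^\beta$, but the gradient map $\mu_{\mathfrak{p}^\beta}$ is only $K^\beta$-equivariant, not $G^\beta$-equivariant: it is built from the $U$-equivariant momentum map by projection, and $G^\beta\cap U=K^\beta$. For $g\in\exp(\mathfrak{p}^\beta)\setminus\{e\}$ the displayed identity simply fails, so the chain of equalities breaks down. The paper circumvents this by using the Cartan decomposition $G^\beta=K^\beta\exp(\mathfrak{p}^\beta)$: the $K^\beta$ part is handled by equivariance, and for $\xi\in\mathfrak{p}^\beta$ one notes that $G^\beta$ preserves $X^\beta$ (because $(\mathrm{d}g)_p(\beta_X)=(\mathrm{Ad}(g)\beta)_X(gp)=\beta_X(gp)$), so the curve $\gamma(t)=\exp(t\xi)\cdot y$ stays in $X^\beta$, whence $\beta_X(\gamma(t))=0$ and $\mu_\mathfrak{p}^\beta\circ\gamma$ is constant. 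You should replace your equivariance computation with this argument.
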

\begin{proof}

Let $g\in G$ and let $\xi \in \liep$. It is easy to check that
\[
(\mathrm{d} g)_p (\xi_X)=(\mathrm{Ad}(g)(\xi) )_X (gp ),
\]
and so $G^\beta$ preserves $X^\beta$. We claim that $Y$ is $G^\beta$-stable. In fact, by Lemma \ref{lemcomp} it follows that $G^\beta = K^\beta\text{exp}(\mathfrak{p}^\beta)$. $Y$ is $K^\beta$ invariant by $K$-equinvariant property of the gradient map. For each $y\in Y,$ let $\xi\in \liep^\beta$ and let $\gamma(t)= \text{exp}(t\xi)y$. Since $\beta_X (\gamma(t))=0$ it follows that $\mup^\beta (\gamma(t) )$ is constant and so $\text{exp} (t\xi)\cdot y\in Y.$ Therefore $Y$ is $G^\beta$ invariant.  By Lemma \ref{parabolicc}, $G^{\beta+} = G^\beta R^{\beta+}$ where $R^{\beta+}$ is connected and then unipotent radical of $G^{\beta+}$. By Proposition \ref{tangent}, $\lier^{\beta+} \subset V_{+}$, where $V_+$ is the sum of the eigenspaces of the
Hessian of $\mupb$ corresponding to positive eigenvalues. Hence $\lier^{\beta+}\cdot z \subset \mathfrak g_z$ for any $z\in Y.$ This implies that $R^{\beta+}$ acts trivially on $Y$ and the result follows.
\end{proof}
\begin{proposition}\label{closed-orbit-parabolic}
$Y$ contains a compact orbit of $G^{\beta+}$ which coincides with a $K^\beta$ orbit.
\end{proposition}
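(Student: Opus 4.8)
The plan is to locate a compact $G^{\beta+}$-orbit inside $Y$ by running a norm-square argument \emph{within} $Y$, and then to identify that orbit with a $K^\beta$-orbit using the Ness-type uniqueness already established. First I would observe, via Lemma~\ref{lemmm}, that $Y$ is a compact (it is closed in the compact $X$) $G^{\beta+}$-invariant submanifold of $Z$, and that since $R^{\beta+}$ acts trivially on $Y$, the induced action on $Y$ factors through $G^{\beta+}/R^{\beta+}\cong G^\beta$. Thus it suffices to produce a compact $G^\beta$-orbit in $Y$ that coincides with a $K^\beta$-orbit. Now $G^\beta = K^\beta\exp(\liep^\beta)$ is a compatible subgroup (Lemma~\ref{lemcomp}), and the associated $G^\beta$-gradient map on $Y$ is $\mu_{\liep^\beta} = \pi_{\liep^\beta}\circ\mu_\liep$, where $\pi_{\liep^\beta}\colon\liep\to\liep^\beta$ is the orthogonal projection (as recalled after Corollary~\ref{slice-cor}).

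The key step is to consider the norm-square function $h(z) := \tfrac12\|\mu_{\liep^\beta}(z)\|^2$ on the compact connected(-component-wise) manifold $Y$, apply Proposition~\ref{lemm} to a point $z_0\in Y$ where $h$ attains its \emph{minimum} on a component of $Y$ — more precisely, I would instead want a point realizing $\inf_{g\in G^\beta}\|\mu_{\liep^\beta}(g z_0)\|$, using Theorem~\ref{corr} applied to the $G^\beta$-action on $Y$ — and conclude that its limit $z_\infty$ under the negative gradient flow of $h$ is a critical point with $G^\beta\cdot z_\infty = K^\beta\cdot z_\infty$, hence a compact orbit. Alternatively, and more cleanly: apply Proposition~\ref{lemm} directly to the function $h$ on $Y$ at a point of \emph{maximum}, obtaining a point $z$ with $G^\beta\cdot z = K^\beta\cdot z$ closed. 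Either route yields a compact $G^\beta$-orbit in $Y$ that equals a $K^\beta$-orbit; since $R^{\beta+}$ acts trivially on $Y$, this orbit is simultaneously the $G^{\beta+}$-orbit of $z$, and we are done.

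The main obstacle I anticipate is ensuring that the $G^\beta$-gradient map theory of Sections~2–4 genuinely applies to the action of $G^\beta$ on $Y$ rather than on $X$: one must check that $\mu_{\liep^\beta}$ restricted to $Y$ is the gradient map for the $G^\beta$-action on $Y$ (this is the content of the discussion following Corollary~\ref{slice-cor}, since $Y$ is $G^\beta$-stable and $\liep^\beta$-gradients are tangent to $Y$), and that $Y$, though possibly disconnected, has each component compact so that Proposition~\ref{lemm} and Theorem~\ref{corr} are available. A secondary point requiring care is the passage from $G^\beta$ to $G^{\beta+}$: here one invokes exactly the conclusion of Lemma~\ref{lemmm} that $R^{\beta+}$ fixes $Y$ pointwise, so that $G^{\beta+}\cdot z = G^\beta R^{\beta+}\cdot z = G^\beta\cdot z$ for $z\in Y$, and likewise $K\cap G^{\beta+} = K^\beta$, making the identification $G^{\beta+}\cdot z = K^\beta\cdot z$ automatic once it is known for $G^\beta$.
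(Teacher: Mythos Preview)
Your proposal is correct and follows essentially the same route as the paper: reduce from $G^{\beta+}$ to $G^\beta$ using that $R^{\beta+}$ acts trivially on $Y$ (Lemma~\ref{lemmm}), then apply the norm-square machinery for the compatible group $G^\beta$ acting on $Y$ to produce a closed $G^\beta$-orbit equal to a $K^\beta$-orbit. The paper invokes Corollary~6.11 of \cite{heinzner-schwarz-stoetzel} in place of your Proposition~\ref{lemm}, but these are the same device; the only additional care the paper takes explicitly is to work first with the identity component $(G^\beta)^o$ on a connected component of $Y$ (since the norm-square results in Sections~3--4 assume $G$ connected and $X$ connected) and then pass to the full $G^\beta$ using that it has finitely many components, each meeting $K^\beta$ --- precisely the obstacle you flagged.
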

\begin{proof}
By Lemma \ref{lemmm}, $(G^{\beta})^o$ preserves any connected component of $Y$ and the restriction of $\mup$ on any connected component defines a gradient map with respect to $(G^{\beta})^o$ \cite{heinzner-schwarz-stoetzel}. By Corollary 6.11 in \cite{heinzner-schwarz-stoetzel} pag. $21$, $(G^{\beta+})^o$ has closed orbit which coincides with a $(K^{\beta})^o$ orbit. Since $G^{\beta}$ has a finite number of connected components and any connected component of $G^\beta$ intersects $K^\beta$, it follows that $G^{\beta}$ has a closed orbit which coincides with a $K^{\beta}$ orbit. This is also a closed orbit of $G^{\beta+}$ since $R^{\beta+}$ acts freely on $Y$, concluding the proof.
\end{proof}

Let $\mathfrak{a} \subset \mathfrak{p}$ be an Abelian subalgebra of $\mathfrak{p}$ and let $\pi_\mathfrak{a} : \mathfrak{p} \rightarrow \mathfrak{a}$
be the orthogonal projection onto $\mathfrak{a}.$ It is well known that $\mu_\mathfrak{a} := \pi_\mathfrak{a} \circ \mu_\mathfrak{p}$ is the gradient map
associated to $A = \text{exp}(\mathfrak{a}).$ Let $P = \text{Conv}(\mu_\mathfrak{a}(X)).$
It is well-known, see for instance \cite[Prop. $3$]{heinzner-schuetzdeller} and \cite[Prop. $3.1$]{LAP},  that  $\mu(X^A)$ is finite and $P$ is
is the convex hull of $\mu_\mathfrak{a}(X^A)$, 
where $
X^A=\{p\in X:\, A\cdot p=p\}
$.

Suppose that the $G$ action on $X$ has a unique compact orbit, which is a $K$ orbit \cite{heinzner-schwarz-stoetzel} denoted by $\mathcal O$. Let $\mathfrak{a}'$ be a maximal Abelian subalgebra containing $\mathfrak{a}.$ Since $\mu_\mathfrak{a}(\mathcal O) = \pi_\mathfrak{a}(\mu_{\mathfrak{a}'}(\mathcal O)),$ By a Theorem of Kostant \cite{kostant-convexity} it follows that $\mu_\mathfrak{a}(\mathcal O)$ is a polytope.
\begin{theorem}\label{convexity property}
Suppose the $G$-action on $X$ has a unique closed orbit $\mathcal O.$ Then $\mu_\mathfrak{a}(X) = \mu_\mathfrak{a} (\OO)$ and so it is a convex polytope.
\end{theorem}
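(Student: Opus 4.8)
The plan is to deduce the identity $\mu_\mathfrak{a}(X)=\mu_\mathfrak{a}(\OO)$ from Proposition \ref{convex-criterium}, using that $\mu_\mathfrak{a}(\OO)$ is already known (via the theorem of Kostant quoted above) to be a convex polytope. Since $\OO\subset X$ one has the chain of inclusions $\mu_\mathfrak{a}(\OO)\subseteq\mu_\mathfrak{a}(X)\subseteq P$, where $P=\text{Conv}(\mu_\mathfrak{a}(X))$ is compact and convex because $X$ is compact and $\mu_\mathfrak{a}$ is continuous. Thus it suffices to apply Proposition \ref{convex-criterium} with $C_1=\mu_\mathfrak{a}(\OO)$ and $C_2=P$: once $C_1=C_2$ is established, the displayed inclusions force $\mu_\mathfrak{a}(X)=\mu_\mathfrak{a}(\OO)=P$, which is the polytope. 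Because $\beta\in\mathfrak{a}$ implies $\mu_\mathfrak{a}^\beta=\langle\pi_\mathfrak{a}\circ\mu_\mathfrak{p},\beta\rangle=\mu_\mathfrak{p}^\beta$, and because the support function of $\text{Conv}(\mu_\mathfrak{a}(X))$ is $\beta\mapsto\max_{x\in X}\mu_\mathfrak{p}^\beta(x)$, checking the hypothesis of Proposition \ref{convex-criterium} reduces to the claim
\[
\max_{x\in X}\mu_\mathfrak{p}^\beta(x)=\max_{z\in\OO}\mu_\mathfrak{p}^\beta(z)\qquad\text{for every }\beta\in\mathfrak{a}.
\]

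To establish this claim I would fix $\beta\in\mathfrak{a}\subset\mathfrak{p}$ and consider the set $Y=\{z\in X:\mu_\mathfrak{p}^\beta(z)=\max_{x\in X}\mu_\mathfrak{p}^\beta(x)\}$ on which $\mu_\mathfrak{p}^\beta$ attains its maximum, which is exactly the set studied before Lemma \ref{lemmm}. By Proposition \ref{closed-orbit-parabolic}, $Y$ contains a point $z$ whose $G^{\beta+}$-orbit is compact and coincides with the $K^\beta$-orbit of $z$. The decisive step is then the computation $G\cdot z=KG^{\beta+}\cdot z=K\cdot(K^\beta\cdot z)=K\cdot z$, which uses the factorization $G=KG^{\beta+}$ recalled in Remark \ref{Hessian comp1} together with $K^\beta\subset K$. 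Hence $G\cdot z$ is a compact, and therefore closed, $G$-orbit; by the uniqueness of the closed orbit it must equal $\OO$, so $z\in\OO$ and $\mu_\mathfrak{p}^\beta(z)=\max_{x\in X}\mu_\mathfrak{p}^\beta(x)$. Combined with the trivial inequality $\max_{z\in\OO}\mu_\mathfrak{p}^\beta\le\max_{x\in X}\mu_\mathfrak{p}^\beta$, this gives the displayed equality.

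The remaining verifications are routine: compactness of $\mu_\mathfrak{a}(X)$ (so that $P$ is a genuine convex body), the inclusion $\mu_\mathfrak{a}(\OO)\subseteq P$ (so the hypothesis $C_1\subseteq C_2$ of Proposition \ref{convex-criterium} is met), and the elementary identity $\mu_\mathfrak{a}^\beta=\mu_\mathfrak{p}^\beta$ on $\mathfrak{a}$. The only genuinely delicate point is the implication ``$G^{\beta+}\cdot z$ compact $\Rightarrow$ $G\cdot z$ closed'', which rests entirely on the Iwasawa-type decomposition $G=KG^{\beta+}$ for the parabolic subgroup $G^{\beta+}$; granting this (recorded in Remark \ref{Hessian comp1}), the argument closes at once. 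Since the hypothesis enters only through Proposition \ref{closed-orbit-parabolic} and the uniqueness of the closed orbit, the same scheme should also yield the Abelian convexity theorem for a two-orbit variety.
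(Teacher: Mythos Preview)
Your proposal is correct and follows essentially the same route as the paper's proof: both arguments reduce the statement to Proposition \ref{convex-criterium} by comparing support functions, and both verify the support-function equality by locating, via Proposition \ref{closed-orbit-parabolic}, a point $z$ in the maximum set $Y$ whose $G^{\beta+}$-orbit is compact and then using $G=KG^{\beta+}$ together with the uniqueness of the closed orbit to conclude $z\in\OO$. The paper phrases the first step in terms of the exposed face $\sigma=\mu_\mathfrak{a}(Y)$ of $P$, but this is only a cosmetic difference from your direct comparison of $\max_{x\in X}\mu_\mathfrak{p}^\beta$ with $\max_{z\in\OO}\mu_\mathfrak{p}^\beta$.
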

\begin{proof}
Let $\xi \in \lia$. Then
\[
\sigma =\{\alpha \in P:\, \mathrm{max}_{\gamma \in P} \langle \gamma, \xi \rangle = \langle \alpha, \xi \rangle \}.
\]
is a face of $P$. Since $P$ is a polytope, any face of $P$  is exposed \cite{schneider-convex-bodies}. We claim that $$\mu_\lia^{-1}(\sigma) = Y,\quad \text{where}\quad Y = \{z\in X : \text{max}_{x\in X}\mu_\mathfrak{p}^\xi = \mu_\mathfrak{p}^\xi(z)\}. $$ 
In fact, it is easy to see that $Y\subset \mu_\lia^{-1}(\sigma).$ Suppose $z\in \mu_\lia^{-1}(\sigma),$ then $\mu_\lia(z) \in \sigma$ and $\mathrm{max}_{\gamma \in P} \langle \gamma, \xi \rangle = \langle \mu_\lia(z), \xi \rangle = \mu_\mathfrak{p}^\xi(z).$ Hence, $z\in Y.$ By Lemma \ref{lemmm}, $Y$ is $G^{\xi+}$-invariant. By Proposition \ref{closed-orbit-parabolic}, let $z\in Y$ be such that $G^{\xi+} \cdot z$ is compact. But $G = KG^{\xi+},$ then $G\cdot z$ is compact. Since the $G$ action on $X$ has a unique compact orbit, $G\cdot z = \OO.$ Therefore, 
$$\text{max}_{x\in P}\langle x, \xi \rangle = \text{max}_{x\in\mu_\mathfrak{a}(\OO)}\langle x, \xi \rangle.$$ 
By Proposition \ref{convex-criterium}, $\mu_\mathfrak{a}(\OO) = P.$ Hence, $\mu_\mathfrak{a}(X) = \mu_\mathfrak{a} (\OO)$ is a polytope.
\end{proof}
\begin{remark}
In the above assumption, applying the main Theorem in \cite{LAP}, the convex hull of $\mup(X)$ coincides with the convex hull of $\mup(\OO)$. Hence the convex hull of $\mup(X)$ is the convex hull of a $K$-orbit in $\liep$ and so a polar orbitope \cite{LA}.
\end{remark}
\subsection{Abelian convexity fron Non-Abelian convexity}
The Non-Abelian convexity theorem implies the Abelian convexity theorem. This is the purpose of this section.

Let $\mathfrak{a}\subset \mathfrak{p}$ be a maximal Abelian, $\mathfrak{a}_+$ positive Weyl Chamber. If $\lambda \in \mathfrak{a}_+,$ we denote by
$$
\roots_\la = \text{Conv}\{w\lambda : w\in W\},
$$ where $W = \frac{N_k(\mathfrak{a})}{C_k(\mathfrak{a})}$ is the Weyl-Group. If $G=U^\C$, then the following result is proved in \cite{gs}. The authors applied Kirwan's Theorem \cite{kirwan-convexity} for the action $U\times \mathbb T$ on the cotangent bundle $T^* U$, where $\mathbb T$ is a maximal torus of $U$. Our proof uses a result of Gichev \cite{gichev-polar}.
\begin{theorem}
If $S\subset \mathfrak{a}_+$ is a convex subset, then,
\begin{equation}\label{Convex}
S^\# = \bigcup\{\roots_\la : \la \in S\}
\end{equation} is convex subset of $\mathfrak{a}.$
\end{theorem}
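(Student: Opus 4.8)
The plan is to reduce the statement to the case where $S$ is a single point and then to the finitely-many-points case, exploiting the fact that $S^\#$ is a union of polar orbitopes $\roots_\la$ (each $\roots_\la$ being the convex hull of a $W$-orbit, hence a compact convex polytope) and that the assignment $\la \mapsto \roots_\la$ behaves monotonically along segments in $\mathfrak a_+$. Concretely, I would first record the elementary fact that if $\la_0, \la_1 \in \mathfrak a_+$ and $\la_t = (1-t)\la_0 + t\la_1$, then for every $w \in W$ one has $w\la_t \in \conv(\roots_{\la_0} \cup \roots_{\la_1})$; this is where the result of Gichev \cite{gichev-polar} enters, giving that $\roots_{\la_t}$, as a function of $t$, is "sandwiched" convexly between $\roots_{\la_0}$ and $\roots_{\la_1}$ — equivalently, the support function $h_{\roots_{\la}}(\xi)$ restricted to any fixed $\xi \in \mathfrak a$ is a convex function of $\la$ on $\mathfrak a_+$. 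Granting this, if $x \in \roots_{\la_0}$ and $y \in \roots_{\la_1}$ with $\la_0,\la_1 \in S$, then for each $t$ the segment point $(1-t)x + ty$ lies in $\conv(\roots_{\la_0}\cup\roots_{\la_1})$, and I must upgrade this to membership in $\roots_{\la_t}$ with $\la_t \in S$ (using convexity of $S$), which is exactly the content needed for $S^\#$ to be convex.

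Next I would set up the argument via support functions, which is the cleanest route given Proposition \ref{convex-criterium} and the support-function language developed in Section \ref{pre-convex}. Define $g_S(\xi) := \sup_{\la \in S} h_{\roots_\la}(\xi)$ for $\xi \in \mathfrak a$. Since each $\roots_\la$ is compact convex and $S$ is convex, one checks that $g_S$ is the support function of the compact convex set $\overline{\conv(S^\#)}$. The goal is then to show $S^\# = \{\alpha \in \mathfrak a : \langle \alpha, \xi\rangle \le g_S(\xi)\ \forall \xi\}$, i.e.\ that $S^\#$ is already convex and closed. Closedness follows from compactness of $S$ together with continuity of $\la \mapsto \roots_\la$ in the Hausdorff metric. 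For convexity, by Gichev's result the map $\la \mapsto h_{\roots_\la}(\xi)$ is convex on $\mathfrak a_+$ for each fixed $\xi$, so its supremum over the convex set $S$ is attained, and the fiberwise analysis above shows that a convex combination of a point of $\roots_{\la_0}$ and a point of $\roots_{\la_1}$ lands in $\roots_{\la_t}$ for the corresponding $\la_t \in S$: indeed $\langle (1-t)x+ty,\xi\rangle \le (1-t)h_{\roots_{\la_0}}(\xi) + t h_{\roots_{\la_1}}(\xi) \le h_{\roots_{\la_t}}(\xi)$ for all $\xi$, and since $\roots_{\la_t}$ is the intersection of the half-spaces cut out by its own support function, this forces $(1-t)x+ty \in \roots_{\la_t} \subset S^\#$.

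The main obstacle is the monotonicity/convexity input: establishing that $h_{\roots_\la}(\xi)$ is a convex function of $\la \in \mathfrak a_+$ for fixed $\xi$, equivalently that $\roots_\la$ grows convexly along segments in the closed Weyl chamber. This is precisely the Gichev-type statement about polar orbits, and it hinges on the interplay between the $W$-action and the chamber geometry: for $\la$ in the closed chamber, $h_{\roots_\la}(\xi) = \max_{w\in W}\langle w\la,\xi\rangle = \max_{w} \langle \la, w^{-1}\xi\rangle$, and one must argue this maximum is a convex function of $\la$ — it is a max of linear functions of $\la$, hence automatically convex on all of $\mathfrak a$; the subtlety is that we need the stronger conclusion that the segment point actually lies in $\roots_{\la_t}$, not merely in $\conv(\roots_{\la_0}\cup\roots_{\la_1})$, and for that one needs that $\la_t$ dominates the relevant convex combination in the chamber order, which is where Gichev's lemma does the real work. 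Once that is in hand, the rest is the routine support-function bookkeeping above, combined with Proposition \ref{convex-criterium} to identify $S^\#$ with $\overline{\conv(S^\#)}$ and conclude.
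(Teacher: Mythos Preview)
Your core plan coincides with the paper's: use Gichev's result to show that for $x\in\roots_{\la_0}$, $y\in\roots_{\la_1}$ with $\la_0,\la_1\in S$, the point $(1-t)x+ty$ lies in $\roots_{\la_t}$ where $\la_t=(1-t)\la_0+t\la_1\in S$. The paper packages this cleanly by introducing the ``graph'' $\roots_0=\{(\la,\mu)\in\mathfrak a_+\times\mathfrak a:\mu\in\roots_\la\}$, showing $\roots_0$ is convex via Gichev's Minkowski-sum identity $\roots_{\la_0+\la_1}=\roots_{\la_0}+\roots_{\la_1}$, and then writing $S^\#=\pi_2\bigl(\pi_1^{-1}(S)\cap\roots_0\bigr)$, a linear image of an intersection of convex sets.

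Your support-function route, however, has a directional error. The inequality you need, $(1-t)h_{\roots_{\la_0}}(\xi)+t\,h_{\roots_{\la_1}}(\xi)\le h_{\roots_{\la_t}}(\xi)$, is \emph{concavity} of $\la\mapsto h_{\roots_\la}(\xi)$, yet you repeatedly invoke convexity --- and your own observation that $h_{\roots_\la}(\xi)=\max_{w\in W}\langle w\la,\xi\rangle$ is a maximum of linear functions yields only convexity, i.e.\ the reverse inequality. What rescues the argument is that Gichev's theorem is an \emph{equality}: $\roots_{t\la_0+(1-t)\la_1}=t\roots_{\la_0}+(1-t)\roots_{\la_1}$ as Minkowski sums, so $\la\mapsto h_{\roots_\la}(\xi)$ is in fact \emph{linear} on $\mathfrak a_+$ and your displayed inequality holds with equality. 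Once you state Gichev's identity in this form, the support-function machinery, the appeal to Proposition~\ref{convex-criterium}, and the closedness discussion (which, incidentally, assumes $S$ compact --- not part of the hypothesis) all become superfluous: from $x\in\roots_{\la_0}$ and $y\in\roots_{\la_1}$ one gets directly $(1-t)x+ty\in(1-t)\roots_{\la_0}+t\roots_{\la_1}=\roots_{\la_t}\subset S^\#$, which is exactly the paper's one-line verification that $\roots_0$ is convex.
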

\begin{proof}
Let
$$
\roots_0 = \{(\la, \mu) \in \mathfrak{a}_+\times \mathfrak{a} : \mu \in \roots_\la\}.
$$ We claim that $\roots_0$ is convex. Let $(\la_1, \mu_1), (\la_2, \mu_2) \in \roots_0,$
$$
t(\la_1, \mu_1) + (1-t)(\la_2, \mu_2) = (t\la_1 + (1-t)\la_2, t\mu_1 + (1-t)\mu_2).
$$
Now, from \cite{gichev-polar} we have
$$
\roots_{(t\la_1 + (1-t)\la_2)} = \roots_{t\la_1} + \roots_{(1-t)\la_2} = t\roots_{\la_1} + (1-t)\roots_{\la_2},
$$
and so
$$
t\mu_1 + (1-t)\mu_2\in \roots_{(t\la_1 + (1-t)\la_2)}.
$$
This shows that $\roots_0$ is convex.
Let
$$
\pi_1 : \mathfrak{a}_+\times \mathfrak{a} \to \mathfrak{a}_+
$$ and
$$
\pi_2 : \mathfrak{a}_+\times \mathfrak{a} \to \mathfrak{a}.
$$
Then,
$$
S^{\#} = \pi_2(\pi_1^{-1}(S)\cap \roots_0)
$$
and so it is convex.
\end{proof}
\begin{theorem}
Let $\mu_\mathfrak{p} : X \to \mathfrak{p}$ be the gradient map. Let $\mathfrak{a}\subset \mathfrak{p}$ be a maximal Abelian subalgebra and let $\mu_\mathfrak{a} = \pi_\mathfrak{a} \circ \mu_\mathfrak{p}$ be the corresponding gradient map. If $\mu_{\mathfrak p} (X) \cap \mathfrak a_+$ is convex, then
\begin{equation}\label{Convex2}
\mu_\mathfrak{a}(X) = (\mu_\mathfrak{p}(X) \cap \mathfrak{a}_+)^\# ,
\end{equation}
and so convex.
\end{theorem}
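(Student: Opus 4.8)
The plan is to deduce \eqref{Convex2} from Kostant's linear convexity theorem and then to read off convexity from the theorem just proved, so that the argument becomes essentially an assembly of the two preceding results.

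First I would record that $C:=\mu_\mathfrak{p}(X)$ is a compact, $\mathrm{Ad}(K)$-invariant subset of $\mathfrak p$: compactness because $X$ is compact and $\mu_\mathfrak{p}$ is continuous, and $\mathrm{Ad}(K)$-invariance because $X$ is $G$-invariant (hence $K$-invariant) and $\mu_\mathfrak{p}\colon X\to\mathfrak p$ is $K$-equivariant, $K$ acting on $\mathfrak p$ by the adjoint representation. Since $\mathfrak a$ is maximal Abelian in $\mathfrak p$, the closed Weyl chamber $\mathfrak a_+$ is a fundamental domain for the $\mathrm{Ad}(K)$-action on $\mathfrak p$ (this is the ``polar'' form of the Cartan decomposition $G=K\exp(\mathfrak p)$ together with $K$-conjugacy of maximal Abelian subalgebras of $\mathfrak p$); in particular every $\mathrm{Ad}(K)$-orbit in $\mathfrak p$ meets $\mathfrak a_+$. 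Combining this with the $\mathrm{Ad}(K)$-invariance of $C$ gives $C=\mathrm{Ad}(K)\cdot S$, where $S:=\mu_\mathfrak{p}(X)\cap\mathfrak a_+$ is compact and, by hypothesis, convex. Hence
\[
\mu_\mathfrak{a}(X)=\pi_\mathfrak{a}(C)=\bigcup_{\lambda\in S}\pi_\mathfrak{a}\big(\mathrm{Ad}(K)\cdot\lambda\big).
\]

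Next I would invoke Kostant's linear convexity theorem \cite{kostant-convexity}: for every $\lambda\in\mathfrak a$ one has $\pi_\mathfrak{a}\big(\mathrm{Ad}(K)\cdot\lambda\big)=\mathrm{Conv}(W\cdot\lambda)$, which for $\lambda\in\mathfrak a_+$ is precisely $\roots_\lambda$. Substituting into the displayed equality yields $\mu_\mathfrak{a}(X)=\bigcup_{\lambda\in S}\roots_\lambda=S^\#$, which is \eqref{Convex2}. Finally, since $S$ is convex, the preceding theorem shows that $S^\#$ is convex, and therefore $\mu_\mathfrak{a}(X)$ is convex.

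The computation above is short; the substantive content sits in the two results it invokes, so there is no real obstacle internal to this proof. The one point that deserves a word of justification is that Kostant's theorem applies in the present generality, with $\mathfrak g$ merely real reductive and $\langle\cdot,\cdot\rangle$ an arbitrary $\mathrm{Ad}(K)$-invariant scalar product on $\mathfrak p$ rather than the Killing form of a semisimple part. This is harmless: Kostant's argument uses only maximality of $\mathfrak a$ in $\mathfrak p$ and $\mathrm{Ad}(K)$-invariance of the metric, and on the central summand $\mathfrak z(\mathfrak g)\cap\mathfrak p$ both the $\mathrm{Ad}(K)$-orbit and the $W$-orbit of any element reduce to that element, so the identity holds trivially there and, being compatible with orthogonal direct sums, extends to all of $\mathfrak p$.
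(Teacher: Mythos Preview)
Your proof is correct and follows essentially the same route as the paper: both use that the $\mathrm{Ad}(K)$-action on $\mathfrak p$ is polar with section $\mathfrak a$, invoke Kostant's theorem to identify $\pi_{\mathfrak a}(\mathrm{Ad}(K)\cdot\lambda)$ with $\Delta_\lambda$, and then conclude $\mu_{\mathfrak a}(X)=(\mu_{\mathfrak p}(X)\cap\mathfrak a_+)^{\#}$ before applying the preceding theorem for convexity. Your write-up is somewhat more explicit about the $\mathrm{Ad}(K)$-invariance of $\mu_{\mathfrak p}(X)$ and about extending Kostant's theorem to the reductive setting, but the argument is the same.
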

\begin{proof}
The action of $K$ on $\mathfrak{p}$ is polar and $\mathfrak{a}$ is a section \cite{Da}. Moreover, if $x\in \mathfrak{p},$ then
$$
K\cdot x \cap \mathfrak{a}_+ = \{\la\}.
$$
By a beautiful Theorem of Kostant \cite{kostant-convexity}, $\pi_\mathfrak{a} (K\cdot x )=\Delta_\lambda$ and so a polytope.
Therefore
$$
\mu_\mathfrak{a}(X) = \{\mu\in \mathfrak{a} : \mu \in \roots_\la, \quad \text{where}\quad \la \in \mu_\mathfrak{p}(X)\cap \mathfrak{a}_+\} = (\mu_\mathfrak{p}(X) \cap \mathfrak{a}_+)^\#.
$$
By the above Theorem, $\mu_\mathfrak{a}(X)$ is convex.
\end{proof}
\subsection{Convexity Results of the Gradient Map}
In this section, we continue to investigate the Abelian convexity property of the gradient map. We give a new proof of the convexity property of the gradient map for $X = Z$ avoiding the Linearization theorem.
\begin{theorem}\label{Kahlar}
Suppose $(Z, \omega)$ is a connected and compact \Keler manifold. Then
$$\mu_\mathfrak{a} : Z \to \mathfrak{a}$$ is a convex polytope.
\end{theorem}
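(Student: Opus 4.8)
The plan is to reduce the assertion to the classical convexity theorem of Atiyah and Guillemin--Sternberg for Hamiltonian torus actions. First I would pass from the noncompact group $A=\exp(\mathfrak{a})\subseteq U^\C$ to a compact torus in $U$. Since $\mathfrak{a}$ is abelian and $[-i\xi,-i\eta]=-[\xi,\eta]$, the subspace $\mathfrak{t}_0:=-i\mathfrak{a}$ is an abelian subalgebra of the compact Lie algebra $\mathfrak{u}$; let $T:=\overline{\exp(\mathfrak{t}_0)}\subseteq U$ be the torus it generates and $\mathfrak{t}:=\Lie(T)\supseteq\mathfrak{t}_0$. The map $\mu_T:=\pi_{\mathfrak{t}}\circ\mu:Z\to\mathfrak{t}$, where $\pi_{\mathfrak{t}}$ is the orthogonal projection of $\mathfrak{u}$ onto $\mathfrak{t}$, is a momentum map for the Hamiltonian action of $T$ on $(Z,\om)$: for $\xi\in\mathfrak{t}$ one has $\langle\mu_T,\xi\rangle=\mu^\xi$ because $\pi_{\mathfrak{t}}$ is self-adjoint, and $d\mu^\xi=\iota_{\xi_Z}\om$ by the definition of $\mu$, the fundamental field $\xi_Z$ being the same whether $\xi$ is viewed in $\mathfrak{u}$ or in $\mathfrak{t}$.

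Second, I would record the elementary identity linking $\mu_\mathfrak{a}$ with $\mu_T$. For every $\beta\in\mathfrak{a}$ we have $-i\beta\in\mathfrak{t}_0\subseteq\mathfrak{t}$, so by \eqref{mu3}
\[
\mu_\mathfrak{p}^\beta=\mu^{-i\beta}=\langle\mu_T,-i\beta\rangle .
\]
Hence $\mu_\mathfrak{a}(z)$ depends on $z$ only through $\mu_T(z)$: writing $L:\mathfrak{t}\to\mathfrak{a}$ for the linear map determined by $\langle L(\eta),\beta\rangle=\langle\eta,-i\beta\rangle$ for all $\beta\in\mathfrak{a}$, one gets $\mu_\mathfrak{a}=L\circ\mu_T$ on $Z$, and therefore $\mu_\mathfrak{a}(Z)=L\bigl(\mu_T(Z)\bigr)$. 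I would then invoke the Atiyah--Guillemin--Sternberg theorem \cite{gs}: since $Z$ is compact, connected and symplectic and $T$ is a compact torus acting in Hamiltonian fashion, $\mu_T(Z)$ is the convex hull of the finitely many points $\mu_T(F)$, $F$ a connected component of the fixed point set $Z^T$, in particular a convex polytope. As the image of a convex polytope under a linear map is again a convex polytope, $\mu_\mathfrak{a}(Z)=L\bigl(\mu_T(Z)\bigr)$ is a convex polytope, which is the assertion.

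The main (in fact the only) delicate point is the passage, in the first step, from $A=\exp(\mathfrak{a})$ to $T$: one must know that the $A$-gradient map $\mu_\mathfrak{a}$, a priori a $\mathfrak{p}$-valued object attached to the Cartan decomposition, becomes after the isometry $i\mathfrak{u}\leftrightarrow\mathfrak{u}$ a genuine linear function of a component of the $U$-momentum map. This is exactly the relation $\mu_\mathfrak{p}^\beta=\mu^{-i\beta}$ together with $\langle\mathfrak{u},i\mathfrak{u}\rangle=0$ and the fact that multiplication by $i$ is an isometry, all recalled in Section \ref{subsection-gradient-moment}; note that $\exp(\mathfrak{a})$ need not be closed in $U^\C$, but this is immaterial since $\mu_\mathfrak{a}$ depends only on $\mathfrak{a}$ and one works on the compact side with the closed torus $T$.

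Finally, to match the phrase ``avoiding the Linearization theorem'', I would point out that the torus convexity theorem invoked here can be proved by the classical Morse-theoretic argument of Atiyah and needs no linearization of the $U^\C$-action: for $\beta\in\mathfrak{a}$ the function $\mu_\mathfrak{p}^\beta=\langle\mu_\mathfrak{a}(\cdot),\beta\rangle$ is Morse--Bott by Corollary \ref{MorseBott}, and in the \Keler setting its critical submanifolds and its negative and positive normal bundles are complex (the zero set of the holomorphic Killing field $(-i\beta)_Z$ is a complex submanifold, and the flow of $\beta_X$ is holomorphic, so $d\beta_X(x)$ commutes with $J$), hence all indices and coindices are even and the level sets of $\mu_\mathfrak{p}^\beta$ are connected; this is precisely the input to Atiyah's inductive proof of the convexity and finiteness of the fixed-point images.
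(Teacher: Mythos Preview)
Your argument is correct: since $\mathfrak{a}\subset\mathfrak{p}\subset i\mathfrak{u}$ is abelian, $\mathfrak{t}_0=-i\mathfrak{a}\subset\mathfrak{u}$ is an abelian subalgebra of a compact Lie algebra, and the identity $\mu_\mathfrak{a}^\beta=\mu^{-i\beta}$ from \eqref{mu3} makes $\mu_\mathfrak{a}$ a linear image of the torus moment map $\mu_T$, to which Atiyah--Guillemin--Sternberg applies directly. This is a clean and short proof of the bare statement.

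The paper, however, proceeds quite differently and for a reason. It does \emph{not} invoke the convexity of $\mu_T(Z)$ as a black box. Instead it uses the meromorphic-limits theorem of \cite{LG2}: for each $\xi\in\mathfrak{a}$ one extracts a sequence $t_n\to\infty$ and a proper analytic set $U_\xi\subset Z$ so that $\exp(t_n\xi)$ converges on $Z\setminus U_\xi$ to a map with image in a single connected component $Z_0$ of $Z^\xi$; a short argument shows $\mu_\mathfrak{a}^\xi(Z_0)=\max_Z\mu_\mathfrak{a}^\xi$. Choosing $\xi_0,\dots,\xi_n$ exposing the vertices of $P=\mathrm{Conv}(\mu_\mathfrak{a}(Z))$ and intersecting the Zariski-open complements, one obtains a dense open set of points $p$ with $\overline{A\cdot p}$ meeting $\mu_\mathfrak{a}^{-1}(x_i)$ for every vertex $x_i$; Atiyah's theorem applied to the single orbit closure $\overline{A\cdot p}$ then gives $\mu_\mathfrak{a}(\overline{A\cdot p})=P$, hence $\mu_\mathfrak{a}(Z)=P$.

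The difference in payoff is this: your reduction proves the theorem in one line but yields nothing beyond it. The paper's route simultaneously establishes the Corollary that follows the theorem, namely that a generic $A$-orbit closure already surjects onto $\mu_\mathfrak{a}(Z)$ and that every local maximum of $\mu_\mathfrak{a}^\xi$ is global with Zariski-open unstable manifold. These consequences are the point of the section (and feed into the coisotropic result that follows), and they are not visible from the torus reduction. Conversely, your approach makes it transparent that the Linearization theorem from \cite{PG} is unnecessary in the special case $X=Z$, since the problem collapses to the compact-torus situation.
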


\begin{proof}
Let $\xi\in \mathfrak{a}$ and $\{t_n\}_{n\in \mathbb{N}}$ be a sequence such that $t_n \to \infty.$ Denote by $g_n = \text{exp}(t_n\xi).$
By Theorem 2 in \cite{LG2}, up to passing to a subsequence, there exist a proper analytic subset $U$ of $Z$ such that
$$
g_n : Z\setminus U \to Z, \quad g_n \to \phi_\infty,
$$ 
where $\phi_\infty$ is Non-dominant meromorphic map. Note that $Z\setminus U$ is connected and Zarinski open. Since $g_n = \text{exp}(t_n\xi),$ it follows $\phi_\infty (Z\setminus U) \subset Z^{\xi}$ where
$$
Z^{\xi} = \{x\in Z : \xi_M (x) = 0\}.
$$
The vector $J\xi$ is a Killing vector field and so $Z^{\xi^\#}$ is smooth, possibly disconnected, submanifold of $Z$ \cite{kot}. Since $Z\setminus U$ s connected, $\phi_\infty(Z\setminus U)$ is contained in a connected components of $Z^{\xi^\#}$ which we denoted by $Z_0.$

Set $\mu_\mathfrak{a}^\xi := \langle \mu_\mathfrak{a}, \xi \rangle.$ $\mu_\mathfrak{a}^\xi(Z_0)$ is constant. We claim that
$$
\mu_\mathfrak{a}^\xi(Z_0) = \text{max}_{z\in Z}\mu_\mathfrak{a}^\xi.
$$
Indeed, let $x_0\in Z:$
$$
\mu_\mathfrak{a}^\xi(x_0) = c = \text{max}_{z\in Z}\mu_\mathfrak{a}^\xi.
$$ Let $\epsilon > 0.$ There exist neighbourhood $N$ of $x_0$ such that
$$
\mu_\mathfrak{a}^\xi(N) \subset (c-\epsilon, c].
$$ $(Z\setminus U)\cap N \neq \emptyset.$ Pick $p\in (Z\setminus U)\cap N.$ Then
$$
c-\epsilon \leq \mu_\mathfrak{a}^\xi(p) \leq \mu_\mathfrak{a}^\xi(\phi_\infty (p)).
$$
Therefore
$$
\mu_\mathfrak{a}^\xi(Z_0) = c = \text{max}_{z\in Z}\mu_\mathfrak{a}^\xi.
$$

Let $P = \text{Conv}(\mu_\mathfrak{a}(Z))$. By a Theorem of Atiyah \cite{Atiyah}, see also  \cite{LG,heinzner-stoetzel}, for any $p\in Z$ we have
\begin{equation}\label{ui}
\mu_\mathfrak{a}(\overline{A\cdot p}) = \text{Conv}(\mu_\mathfrak{a}(Z^A \cap \overline{A\cdot p})) \subset \mu_{\mathfrak a} (p) + \lia_p,
\end{equation} 
where
$Z^A = \{z\in Z : A\cdot z = z\}$ and $\mathfrak a_p$ is the Lie algebra of $A_p$. Since $Z$ is compact, the set $Z^A$
has finitely many connected components. By $(\ref{ui})$, it follows that $\mu_\mathfrak{a}(Z^A)$ is finite and $P = \text{Conv}(\mu_\mathfrak{a}(Z^A))$. This implies that $P$ is a polytope.

Let $x_0, x_1, \cdots, x_n \in P$ be verticies. Since $P$ is a polytope any face is exposed. Then there exist $\xi_0, \xi_1, \cdots, \xi_n \in \mathfrak{a}$ such that
$$
x_i = \{\theta \in P: \langle\theta, \xi_i\rangle = \text{max}_{y\in P}\langle y, \xi_i\rangle, i = 0, 1, \cdots, n  \}.
$$
Denote $c_i = \langle x_i, \xi_i\rangle$, for $i=1,\ldots,n$. There exists $U_0, \cdots, U_n$ proper analytic subset and a sequence $t_N \to +\infty$ such that
$
\lim_{N \to \infty}\text{exp}(t_N\xi_i)
$ exists in $Z\setminus U_i.$ Moreover, if $z_i\in Z\setminus U_i,$ then

$$
\lim_{N \to \infty}\text{exp}(t_N\xi_i)\cdot z_i \in (\mu_\mathfrak{a}^{\xi_i})^{-1}(c_i) = \mu_\mathfrak{a}^{-1}(x_i).
$$
Now,

$$
(Z\setminus U_0) \cap (Z\setminus U_1) \cap \cdots \cap (Z\setminus U_n) = Z\setminus (U_0 \cup  U_1 \cup \cdots \cup  U_n)\neq \emptyset,
$$ then
$$
\overline{A\cdot p}\cap \mu_\mathfrak{a}^{-1}(x_i) \neq \emptyset,
$$ whenever $p\in Z\setminus (U_0 \cup  U_1 \cup \cdots \cup  U_n)$ for any $i = 0, \cdots, n.$  Applying, again  a Theorem of Atiyah \cite{Atiyah}, we have
$$
\mu_\mathfrak{a}(\overline{A\cdot p}) = \text{Conv}(\mu_\mathfrak{a}(Z^A)\cap \overline{A\cdot p}) = P.
$$ Therefore
$$
\mu_\mathfrak{a}(Z) = P.
$$
\end{proof}
\begin{corollary}
In the above setting, the following hold true:
\begin{enumerate}
\item $\{p\in Z: \mu_\mathfrak{a}(\overline{A\cdot p}) = \mu_\mathfrak{a}(Z) \}$ contains an open and dense subset of $Z.$
\item Any local maximal of $\mu_\mathfrak{a}^\xi$ is a global maximal. Indeed, we have proved that the unstable manifold of the critical component $C_0$ corresponding to the maximum is Zarinski open.
\end{enumerate}
\end{corollary}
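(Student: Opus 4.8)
The plan is to read off both assertions from the proof of Theorem~\ref{Kahlar}, together with the Morse--Bott decomposition of Theorem~\ref{decomposition}. Fix $\xi\in\mathfrak{a}$ once and for all. Since $\pi_\mathfrak{a}$ is the orthogonal projection and $\xi\in\mathfrak{a}$, we have $\mu_\mathfrak{a}^\xi=\mu_\mathfrak{p}^\xi$, so $\nabla\mu_\mathfrak{a}^\xi=\xi_Z$ and the gradient flow of $\mu_\mathfrak{a}^\xi$ is $\phi_t=\exp(t\xi)$; in particular every critical point of $\mu_\mathfrak{a}^\xi$ is a fixed point of $\exp(\mathbb{R}\xi)$.

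For (a) I would observe that the required set was already produced inside the proof of Theorem~\ref{Kahlar}. There, for the vertices $x_0,\dots,x_n$ of $P=\mu_\mathfrak{a}(Z)$ we found vectors $\xi_0,\dots,\xi_n\in\mathfrak{a}$, proper analytic subsets $U_0,\dots,U_n\subset Z$ and a sequence $t_N\to+\infty$ with $\lim_N\exp(t_N\xi_i)\cdot z\in\mu_\mathfrak{a}^{-1}(x_i)$ for every $z\in Z\setminus U_i$. Put $W:=Z\setminus(U_0\cup\cdots\cup U_n)$; being the complement of a proper analytic subset of the connected complex manifold $Z$, it is open and dense. For $p\in W$ one gets $\overline{A\cdot p}\cap\mu_\mathfrak{a}^{-1}(x_i)\neq\emptyset$ for all $i$, so $\mu_\mathfrak{a}(\overline{A\cdot p})$ contains every vertex of $P$; since it is convex by Atiyah's theorem (see \eqref{ui}) and trivially contained in $\mu_\mathfrak{a}(Z)=P$, it equals $P=\mu_\mathfrak{a}(Z)$. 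Hence $W$ is contained in the set of (a).

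For (b) I would apply Theorem~\ref{decomposition} with $\beta=\xi$: writing $c_1>\cdots>c_r$ for the critical values of $\mu_\mathfrak{p}^\xi$, $C_i=(\mu_\mathfrak{p}^\xi)^{-1}(c_i)$ and $Z=\bigsqcup_{i=1}^r W_i^\xi$ for the unstable manifolds, $W_i^\xi\to C_i$ is a fibration with fibre $\mathbb{R}^{l_i}$, $l_i$ the index of $C_i$. The key point is that $W_1^\xi$, the unstable manifold of the maximal critical level, is open and dense. It is open because, by the Morse--Bott property (Corollary~\ref{MorseBott}) and negative semidefiniteness of the Hessian of $\mu_\mathfrak{p}^\xi$ at a maximum, the normal Hessian along $C_1$ is negative definite, hence $l_1=\operatorname{codim}_Z C_1$ and $\dim W_1^\xi=\dim Z$; it is dense because the proof of Theorem~\ref{Kahlar} furnishes a proper analytic $U\subset Z$ with $\lim_{t\to+\infty}\exp(t\xi)p\in Z_0\subseteq C_1$ for every $p\in Z\setminus U$, i.e. $Z\setminus U\subseteq W_1^\xi$; this Zariski-open dense subset is precisely what the ``indeed'' remark in the statement refers to. Now let $x$ be a local maximum of $\mu_\mathfrak{a}^\xi=\mu_\mathfrak{p}^\xi$. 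Then $x$ is critical, so $\exp(t\xi)x=x$; let $C$ be the connected critical submanifold through $x$ and $W(C):=\{z:\phi_\infty(z)\in C\}$ its unstable manifold, which contains $x$. Local maximality of $\mu_\mathfrak{p}^\xi$ at $x$ forces, exactly as for $C_1$, the normal Hessian at $x$ to be negative definite, so $W(C)$ has full dimension and is therefore open. If $\mu_\mathfrak{p}^\xi(x)<c_1$, then $W(C)$ is a nonempty open set disjoint from $W_1^\xi$ --- impossible, since $W_1^\xi$ is dense. Hence $\mu_\mathfrak{p}^\xi(x)=c_1$, i.e. $x$ is a global maximum.

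The one genuinely substantive step is this openness/dimension argument: the Morse--Bott normal form (Corollary~\ref{slice-cor-2}) forces the unstable manifold of a local-maximum component to have full dimension, and a nonempty open set cannot be disjoint from a dense one, so no spurious local maximum can coexist with the dense top stratum $W_1^\xi$. Everything else --- the identifications $\mu_\mathfrak{a}^\xi=\mu_\mathfrak{p}^\xi$ and $\nabla\mu_\mathfrak{p}^\xi=\xi_Z$, and the bookkeeping with proper analytic subsets --- is routine and has already been carried out in the proof of Theorem~\ref{Kahlar}.
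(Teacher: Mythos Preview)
Your proposal is correct and matches the paper's approach. The paper gives no separate proof of this corollary: part (a) is exactly the set $Z\setminus(U_0\cup\cdots\cup U_n)$ already exhibited in the proof of Theorem~\ref{Kahlar}, and part (b) is asserted via the ``Indeed'' remark, i.e.\ the Zariski-openness of the top unstable manifold established there; you have simply fleshed out the Morse--Bott/density argument that the paper leaves implicit.
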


We now prove the convexity property of the gradient map when $X$ is a connected, compact coisotropic submanifold of $(Z,\omega).$
\begin{definition}\label{coisotropic}
A submanifold $X\subset (Z, \omega)$ is coisotropic if for any $p\in X,$ we have
$$
(T_pX)^{\bot_\omega} \subset T_pX.
$$
\end{definition}

Since $(Z, \omega)$ is \keler,

$$
(T_pX)^{\bot_\omega} = J((T_pX)^{\bot}).
$$

\begin{lemma}\label{lemaa}
If $X$ is coisotropic, then for any $p\in X$, we have
$$
T_pX + J(T_pX) = T_pZ.
$$
\end{lemma}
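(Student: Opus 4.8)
The plan is to reduce everything to linear algebra in $T_pZ$ by taking orthogonal complements with respect to the Kähler metric $(\cdot,\cdot)=\omega(\cdot,J\cdot)$ and using that $J$ is an orthogonal transformation with $J^2=-\Id$. Fix $p\in X$ and write $\bot$ for the orthogonal complement of a subspace of $T_pZ$ taken with respect to $(\cdot,\cdot)$.

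First I would record two elementary facts. For any linear subspaces $V,W\subset T_pZ$ one has $(V+W)^\bot=V^\bot\cap W^\bot$, and since $J$ preserves the metric, $(JV)^\bot=J(V^\bot)$. Applying the first identity with $V=T_pX$ and $W=J(T_pX)$ gives
$$
\bigl(T_pX+J(T_pX)\bigr)^\bot=(T_pX)^\bot\cap\bigl(J(T_pX)\bigr)^\bot .
$$
Now I would invoke the identity $(T_pX)^{\bot_\omega}=\bigl(J(T_pX)\bigr)^\bot$, which holds because $Z$ is Kähler: indeed $\omega(v,w)=(v,-Jw)$, so $\omega(v,w)=0$ for all $w\in T_pX$ precisely when $v\bot J(T_pX)$ (this is the same computation behind the displayed identity $(T_pX)^{\bot_\omega}=J((T_pX)^\bot)$ recalled just above the lemma). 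Hence the right-hand side above equals $(T_pX)^\bot\cap(T_pX)^{\bot_\omega}$.

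Finally, coisotropy enters: by hypothesis $(T_pX)^{\bot_\omega}\subset T_pX$, so
$$
(T_pX)^\bot\cap(T_pX)^{\bot_\omega}\subset (T_pX)^\bot\cap T_pX=\{0\}.
$$
Therefore $\bigl(T_pX+J(T_pX)\bigr)^\bot=\{0\}$, which forces $T_pX+J(T_pX)=T_pZ$, as claimed.

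I do not expect any genuine obstacle here: the statement is a purely pointwise linear-algebra consequence of the coisotropy condition together with the compatibility of $J$ with the metric. The only point requiring care is to keep the two notions of orthogonal complement — the metric one $\bot$ and the symplectic one $\bot_\omega$ — cleanly separated, and to use correctly that $J$ interchanges them.
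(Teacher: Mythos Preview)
Your proof is correct and is essentially the same linear-algebra argument as the paper's, just phrased dually: the paper applies $J$ to the inclusion $J\bigl((T_pX)^\bot\bigr)=(T_pX)^{\bot_\omega}\subset T_pX$ to get $(T_pX)^\bot\subset J(T_pX)$ and hence $T_pX+J(T_pX)\supset T_pX+(T_pX)^\bot=T_pZ$, whereas you pass to orthogonal complements and show $\bigl(T_pX+J(T_pX)\bigr)^\bot=(T_pX)^\bot\cap (T_pX)^{\bot_\omega}=\{0\}$. Both routes use exactly the same ingredients (coisotropy plus $J$-compatibility of the metric), so there is no substantive difference.
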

\begin{proof}
$$
J((T_pX)^{\bot}) \subset T_pX.
$$
Applying $J$ we have
$$
(T_pX)^{\bot} \subset J(T_pX).
$$
And so
$$
T_pX + J(T_pX) = T_pZ.
$$
\end{proof}
\begin{lemma}\label{open} Let $X$ be a $A$-invariant compact connected coistropic submanifold of $(Z,\omega).$
Let $\xi\in \mathfrak{a}.$ Then
$$
\text{max}_{p\in X}\mu_\mathfrak{a}^\xi = \text{max}_{z\in Z}\mu_\mathfrak{a}^\xi.
$$ Moreover, the unstable manifold associated to the maximum of $\mu_\mathfrak{a}^\xi$ is open and dense.
\end{lemma}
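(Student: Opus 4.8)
The plan is to feed the coisotropy of $X$ into the analytic input already used in the proof of Theorem~\ref{Kahlar}. First I would record the harmless reductions: since $\xi\in\mathfrak a\subset\mathfrak p$ one has $\mu_\mathfrak a^\xi=\langle\mu_\mathfrak a,\xi\rangle=\mu_\mathfrak p^\xi$, whose gradient along $X$ is $\xi_X=\xi_Z|_X$ — here one uses that $X$, being $A$-invariant, is preserved by the flow $t\mapsto\exp(t\xi)\cdot(\,)$, because $\exp(\mathbb R\xi)\subset A$ is a compatible one-parameter subgroup. Applying Corollary~\ref{MorseBott}, Corollary~\ref{slice-cor-2} and Theorem~\ref{decomposition} to the action of $\exp(\mathbb R\xi)$ on the compact connected manifold $X$, the function $\mu_\mathfrak p^\xi|_X$ is Morse--Bott, the limit $\phi_\infty(x):=\lim_{t\to+\infty}\exp(t\xi)\cdot x$ exists for every $x\in X$ and is a critical point of $\mu_\mathfrak p^\xi|_X$, and $X$ is partitioned into the associated unstable manifolds. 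Finally I would recall the content of the proof of Theorem~\ref{Kahlar}: by Theorem 2 in \cite{LG2} there are a proper analytic subset $U\subsetneq Z$ and a sequence $t_n\to+\infty$ such that for every $q\in Z\setminus U$ the limit $\lim_n\exp(t_n\xi)\cdot q$ exists and $\mu_\mathfrak a^\xi$ takes the value $c:=\max_{z\in Z}\mu_\mathfrak a^\xi$ on it.

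The heart of the argument — and the step where coisotropy is indispensable — is the claim that $X\cap U$ is nowhere dense in $X$, so that $X\setminus U$ is open and dense in $X$. To prove this I would argue by contradiction: $X\cap U$ is closed in $X$, so if it were not nowhere dense it would contain a nonempty subset $B$ that is open in $X$. Stratifying $U$ by its successive singular loci (a strictly decreasing, hence finite, chain of analytic subsets) and inducting on this chain, one finds a point $p\in B$ lying in the smooth locus $S$ of one of these analytic subsets, and $S$ is then a complex submanifold of $Z$ of complex dimension $\le n-1$, because $U\subsetneq Z$ and $Z$ is connected. Near $p$ the submanifold $X$ is contained in $S$, hence $T_pX\subseteq T_pS$, a complex subspace of $T_pZ$; therefore $T_pX+J(T_pX)\subseteq T_pS\subsetneq T_pZ$, contradicting Lemma~\ref{lemaa}.

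Granting the claim, I would conclude as follows. Pick $p\in X\setminus U$. Since $\exp(t_n\xi)\cdot p\in X$ and $X$ is closed in $Z$, the point $\lim_n\exp(t_n\xi)\cdot p$ lies in $X$; but a convergent net and any of its subsequences share the same limit, so this point equals the flow limit $\phi_\infty(p)$, and $\mu_\mathfrak a^\xi(\phi_\infty(p))=c$. Thus $\phi_\infty(p)\in X$ is a critical point of $\mu_\mathfrak p^\xi|_X$ attaining the value $c$, whence $c\le\max_{x\in X}\mu_\mathfrak a^\xi\le c$, i.e. $\max_{p\in X}\mu_\mathfrak a^\xi=\max_{z\in Z}\mu_\mathfrak a^\xi$. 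Writing $C:=(\mu_\mathfrak p^\xi|_X)^{-1}(c)$ for the maximal critical level and $W:=\{x\in X:\phi_\infty(x)\in C\}$ for the associated unstable manifold, the same computation shows $X\setminus U\subseteq W$, so $W$ is dense in $X$ by the claim; and $W$ is open in $X$, since at a maximum the Hessian of $\mu_\mathfrak p^\xi|_X$ is negative definite transverse to $C$, so by Theorem~\ref{decomposition} the fibration $W\to C$ has fibres of dimension equal to the codimension of $C$ in $X$, i.e. $W$ is a full-dimensional submanifold. This proves the lemma.

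I expect the claim in the second paragraph to be the only genuine obstacle: without coisotropy the submanifold $X$ could a priori be entirely swallowed by the analytic set $U$ produced by \cite{LG2}, and then the argument would collapse; it is precisely Lemma~\ref{lemaa} — the pointwise form of coisotropy — that prevents this. Everything else is routine bookkeeping with limits of the gradient flow.
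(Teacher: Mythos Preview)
Your proof is correct, but it takes a genuinely different route from the paper's.

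The paper argues as follows: pick any connected critical component $C_0$ of $\mu_\mathfrak a^\xi|_X$ that is a local maximum, and let $\bar C_0$ be the component of $Z^\xi$ containing it. Since the gradient on $X$ is the restriction of the gradient on $Z$, one has $W_0^\xi=\bar W_0^\xi\cap X$, where $\bar W_0^\xi$ is the unstable manifold of $\bar C_0$ in $Z$. By the Linearization Theorem of \cite{PG}, $\bar W_0^\xi$ is a complex submanifold of $Z$; since $W_0^\xi$ is open in $X$ (local max), for $p\in W_0^\xi$ one has $T_pX\subset T_p\bar W_0^\xi$, hence $T_pX+J(T_pX)\subset T_p\bar W_0^\xi$, and Lemma~\ref{lemaa} forces $\bar W_0^\xi$ to be open in $Z$. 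But $\mu_\mathfrak a^\xi:Z\to\R$ is Morse--Bott of even index, so it has a unique open unstable manifold, the one corresponding to the global maximum. Thus every local maximum of $\mu_\mathfrak a^\xi|_X$ sits at the global maximum level of $Z$, and the density of $W_0^\xi$ in $X$ follows from the codimension count in Theorem~\ref{decomposition}.

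You instead recycle the machinery of Theorem~\ref{Kahlar}: the proper analytic set $U\subsetneq Z$ produced by \cite{LG2} swallows all the ``bad'' points in $Z$, and coisotropy (via Lemma~\ref{lemaa} and the singular stratification of $U$) guarantees that $X$ cannot be locally trapped inside $U$, so $X\setminus U$ is open and dense in $X$; the rest is bookkeeping with flow limits. Your approach avoids invoking the Linearization Theorem of \cite{PG} altogether and is in that sense more self-contained relative to what the paper has already proved. The paper's approach, on the other hand, yields the slightly sharper intermediate statement that $\mu_\mathfrak a^\xi|_X$ has a \emph{unique} local maximum value (hence a single open unstable manifold), whereas your argument, as written, shows directly that the union of the max-level unstable manifolds is open and dense without first ruling out several components. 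Both arguments pivot on Lemma~\ref{lemaa} at the crucial moment, but apply it to different complex objects: you to the smooth strata of an analytic set, the paper to the complex unstable manifolds in $Z$.
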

\begin{proof}
Let $W_0^\xi$ be the unstable manifold of the critical manifold $C_0$ satisfying $\mu_\mathfrak{a}^\xi(C_0) = c_0.$ Assume that $C_0$ corresponds to a local maximum. Since
$$
\nabla \mu_\mathfrak{a}^\xi|_X = \nabla \mu_\mathfrak{a}^\xi,
$$ it follows that

$$
W_0^\xi = \bar{W_0^\xi} \cap X,
$$ where $\bar{W_0^\xi}$ is the unstable manifold in $Z$ of the critical components $\bar{C}_0$ such that
$$
\mu_\mathfrak{a}^\xi(\bar{C}_0) = \mu_\mathfrak{a}^\xi(C_0) = c_0.
$$
By a Linearization theorem in \cite{PG}, $\bar{W_0^\xi}$ is a complex manifold and $W_0^\xi$ is open in $X$. Let $p\in W_0^\xi.$ Since
$$
T_pW_0^\xi = T_pX \subset T_p\bar{W_0^\xi},
$$
it follows that
$$
T_pX + J(T_pX) \subset T_p\bar{W_0^\xi}.
$$ By Lemma \ref{lemaa}, $\bar{W_0^\xi}$ is open. Since $\mu_\mathfrak{a}^\xi :Z \lra \R$ is Morse-Bott of even index, it follows that $\mu_\mathfrak{a}^\xi:Z \lra \R$ has a unique local maximum and so $\bar{W_0^\xi}$ is open and dense. Therefore $\mu_\mathfrak{a}:X \lra \R$ has also a unique local maximum. This proves 
$$\text{max}_{p\in X}\mu_\mathfrak{a}^\xi = \text{max}_{z\in Z}\mu_\mathfrak{a}^\xi.$$
 Since $\mu_\mathfrak{a}^\xi:X \lra \R$ is Morse-Bott,  applying Theorem \ref{decomposition} we get that, the unstable manifolds different from $W_0^\xi$ have codimension at least one. This implies that $W_0^\xi$ is also open and dense in $X$.
\end{proof}
\begin{theorem}\label{convexity-coisotropic}
If $X$ is a $A$-invariant compact connected coistropic submanifold of $(Z,\omega).$ Then
$$
\mu_\mathfrak{a}(X) = \mu_\mathfrak{a}(Z),
$$ and so a polytope. Moreover, there exists a subset an open and dense subset $W$ of $X$  such that for any $p\in W$, we have
\[
\mu_\mathfrak{a}(X) =\overline{\mu_{\mathfrak a} (A\cdot p)}.
\]
\end{theorem}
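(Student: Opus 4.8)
The plan is to deduce the statement from the compact Kähler case (Theorem~\ref{Kahlar}) together with Lemma~\ref{open}, by producing a \emph{single} open dense subset $W\subset X$ that simultaneously ``captures'' every vertex of the polytope $P:=\mu_\mathfrak{a}(Z)$ inside the closure of an $A$-orbit.

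First I would fix the polytope $P=\mu_\mathfrak{a}(Z)$ furnished by Theorem~\ref{Kahlar} and enumerate its vertices $x_0,\dots,x_n$. Since a polytope is the convex hull of its vertices and each of its faces is exposed \cite{schneider-convex-bodies}, for every $i$ there is $\xi_i\in\mathfrak{a}$ with $\{x_i\}=\{y\in P:\ \langle y,\xi_i\rangle=c_i\}$, where $c_i:=\max_{y\in P}\langle y,\xi_i\rangle=\langle x_i,\xi_i\rangle$. Because $\xi_i\in\mathfrak{a}$ one has $\mu_\mathfrak{a}^{\xi_i}=\mu_\mathfrak{p}^{\xi_i}$, so the gradient flow of $\mu_\mathfrak{a}^{\xi_i}|_X$ is the flow of $(\xi_i)_X$; Lemma~\ref{open} then yields $\max_{x\in X}\mu_\mathfrak{a}^{\xi_i}=\max_{z\in Z}\mu_\mathfrak{a}^{\xi_i}=c_i$ and tells us that the unstable manifold $W_i\subset X$ of the maximal critical component of $\mu_\mathfrak{a}^{\xi_i}|_X$ is open and dense in $X$. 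I would then set $W:=\bigcap_{i=0}^n W_i$, which is open and dense in $X$ as a finite intersection of such sets.

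Next I would verify that $W$ works. Fix $p\in W$. For each $i$ the limit $q_i:=\lim_{t\to+\infty}\exp(t\xi_i)\cdot p$ exists (Corollary~\ref{slice-cor-2}), lies in $\overline{A\cdot p}$ (each $\exp(t\xi_i)\in A$), and satisfies $\mu_\mathfrak{a}^{\xi_i}(q_i)=c_i$. Since $\mu_\mathfrak{a}(q_i)\in\mu_\mathfrak{a}(X)\subset\mu_\mathfrak{a}(Z)=P$, the point $\mu_\mathfrak{a}(q_i)$ lies on the face of $P$ exposed by $\xi_i$, namely the vertex $\{x_i\}$; thus $x_i\in\mu_\mathfrak{a}(\overline{A\cdot p})$ for all $i$. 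Now $\overline{A\cdot p}$ is a compact $A$-invariant subset of $Z$ contained in $X$, so $Z^A\cap\overline{A\cdot p}=X^A\cap\overline{A\cdot p}$; applying the theorem of Atiyah as in $(\ref{ui})$ inside $Z$ gives $\mu_\mathfrak{a}(\overline{A\cdot p})=\text{Conv}\!\big(\mu_\mathfrak{a}(X^A\cap\overline{A\cdot p})\big)$, a convex subset of $P$ containing every vertex of $P$, hence equal to $P$. Since $W\subset X$ and $\overline{A\cdot p}\subset X\subset Z$, this sandwiches $P=\mu_\mathfrak{a}(\overline{A\cdot p})\subset\mu_\mathfrak{a}(X)\subset\mu_\mathfrak{a}(Z)=P$, so $\mu_\mathfrak{a}(X)=\mu_\mathfrak{a}(Z)$ is a polytope; and $\mu_\mathfrak{a}(\overline{A\cdot p})=\overline{\mu_\mathfrak{a}(A\cdot p)}$ by continuity of $\mu_\mathfrak{a}$ and compactness of $\overline{A\cdot p}$, which gives the ``moreover'' statement with this $W$.

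The step I expect to be the main obstacle is the convexity of $\mu_\mathfrak{a}(\overline{A\cdot p})$: the orbit closure need not be a submanifold, so this cannot come from the Morse theory used so far and must be extracted from the torus-convexity result for $A=\exp(\mathfrak{a})$ acting on the compact Kähler manifold $Z$ --- precisely $(\ref{ui})$. The careful point is to confirm that its hypotheses transfer verbatim ($Z$ compact, $Z^A$ with finitely many components, $\overline{A\cdot p}$ compact and $A$-invariant) and that $Z^A\cap X=X^A$, so that the conclusion can legitimately be read inside $X$. Once this is granted, the remainder is formal bookkeeping with Lemma~\ref{open} and the elementary geometry of polytopes.
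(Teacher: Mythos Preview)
Your argument is correct. It differs from the paper's proof mainly in the tool used to pass from Lemma~\ref{open} to the conclusion. The paper, once it has $\max_{X}\mu_\mathfrak{a}^\xi=\max_{Z}\mu_\mathfrak{a}^\xi$ for every $\xi\in\mathfrak{a}$, invokes Proposition~3.1 of \cite{LG} as a black box to get that $\mu_\mathfrak{a}(X)$ is a polytope and that the set $\{p\in X:\mu_\mathfrak{a}(\overline{A\cdot p})=\mu_\mathfrak{a}(X)\}$ is open and dense, and then applies Proposition~\ref{convex-criterium} (equality of support functions) to identify $\mu_\mathfrak{a}(X)$ with $\mu_\mathfrak{a}(Z)$. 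You instead rerun the vertex-by-vertex strategy of the proof of Theorem~\ref{Kahlar}, replacing the meromorphic-limits step there by Lemma~\ref{open} to produce the open dense sets $W_i\subset X$, and then use Atiyah's orbit-closure convexity $(\ref{ui})$ inside $Z$ to conclude. Your route is more self-contained within the paper (no appeal to \cite{LG}), while the paper's is shorter by delegation; both ultimately hinge on Lemma~\ref{open} and the convexity of $\mu_\mathfrak{a}(\overline{A\cdot p})$.
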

\begin{proof}
Let $\xi\in \mathfrak{a},$ by Lemma \ref{open},
$$
\text{max}_{p\in X}\mu_\mathfrak{a}^\xi = \text{max}_{z\in Z}\mu_\mathfrak{a}^\xi,
$$ and the unstable manifold associated to the maximum of $\mu_\mathfrak{a}^\xi$ is open and dense. By Proposition 3.1 in \cite{LG}, $\mu_\lia (X)$ is a polytope. Moreover,  the set
\[
\left\{p\in X : \mu_\mathfrak{a}(\overline{A\cdot p}) = \mu_\mathfrak{a}(X)\right\}
\]
is open and dense. Finally, by Proposition \ref{convex-criterium}, we have
$$
\mu_\mathfrak{a}(X) = \mu_\mathfrak{a}(Z)
$$
concluding the proof.
\end{proof}
\section{Two orbits variety}
In this section we investigate two orbits variety.
\begin{definition}
Let $X$ be a compact and connected $G$-stable submanifold of $(Z,\omega)$. We say that $X$ is a two orbit variety if $G$-action on $X$ has two orbits.
\end{definition}
S. Cupit-Foutou obtained the classification of a complex algebraic varieties on which a reductive complex algebraic group acts with two orbits \cite{Cupit-Foutou}.

The norm square $f$ has a maximum and a minimum. By the stratification theorem, keeping in mind that the strata are $G$-invariant, $X$ is the union of a closed $G$-orbit $S_{\beta_{max}}$, where the norm square achieves the maximum, and an open $G$-orbit $S_{\beta_{min}}$, the stratum relative to the minimum of the norm square. We then show that $f$ is a Morse-Bott function.
\begin{theorem}\label{two-orbit-Morse} If $G$ acts on $X$ with two orbits, then
\begin{enumerate}
    \item the function $f : X \rightarrow \mathbb{R}$ given by
\begin{equation*}
    f(x) := \frac{1}{2}\parallel \mu_\mathfrak{p}(x)\parallel^2 \qquad \text{for}\quad x \in X.
\end{equation*} is Morse-Bott; It has only two connected critical submanifolds given by the closed $G$-orbit $S_{\beta_{max}}$, the stratum associated with the maximum of $f$ and by a $K$-orbit $S_{\beta_{min}}$, the stratum associated with the minimum of $f$.

    \item The Poincar\'{e} polynomial $P_X(t)$ of $X$ satisfies
    $$
    P_X(t) = t^k\cdot P_{S_{\beta_{max}}}(t) +P_{S_{\beta_{min}}}(t) - (1+t)R(t),
    $$
    where $k$ is the real codimension of $S_{\beta_{max}}$ in $X$ and $R(t)$ is a polynomial with positive integer coefficients. In particular $\chi(X) = \chi(S_{\beta_{max}}) + \chi(S_{\beta_{min}});$

    \item The $K$-equivariant Poincar\'{e} series of $X$ is given by

    $$
    P^K_X(t) = t^k\cdot P^K_{S_{\beta_{max}}}(t) + P^K_{S_{\beta_{min}}}(t).
    $$
\end{enumerate}
\end{theorem}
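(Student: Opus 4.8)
\emph{Proof proposal.}
The plan is to combine the stratification theorem with the Hessian formulas recalled above and with Kirwan's Morse theory. Since $G$ acts on $X$ with two orbits and the orbits are locally closed submanifolds, exactly one of them, say $\mathcal{O}$, has full dimension (two submanifolds cannot both be proper; and if both were open $X$ would be disconnected); thus $\mathcal{O}$ is open and dense, and $\mathcal{O}':=X\setminus\mathcal{O}$ is the other orbit, which is closed, hence compact, hence a single $K$-orbit by \cite{heinzner-schwarz-stoetzel}. First I would pin down the strata. By Theorem \ref{Stratification}, $X=\bigsqcup_{\beta\in\mathfrak{B}_+}S_\beta$ with each $S_\beta$ a $G$-invariant, hence orbit-saturated, subset. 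The stratum $S_{\beta_{\min}}$ attached to the minimal value of $\tfrac12\|\mu_\mathfrak{p}\|^2$ is open (by the theorem proved above that the minimal stratum is open); being open, $G$-invariant and meeting the dense orbit, it contains $\mathcal{O}$. It cannot be strictly larger: otherwise it would contain a point of $\mathcal{O}'$, hence all of $\mathcal{O}'$, so $S_{\beta_{\min}}=X$; then $\mathfrak{B}_+=\{\beta_{\min}\}$, so $f$ is constant on $\Crit(f)$, while by Proposition \ref{lemm} the maximum of $f$ is attained on a closed orbit and at a critical point, forcing $f$ to be constant on $X$; but then $\mathcal{O}\subset\Crit(f)$, and by the Ness uniqueness Theorem \ref{critt} the critical points lying in the single $G$-orbit $\mathcal{O}$ form one $K$-orbit, so $\mathcal{O}$ is compact, hence closed, hence equal to $X$, contradicting that $X$ has two orbits. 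Therefore $S_{\beta_{\min}}=\mathcal{O}$ and $S_{\beta_{\max}}:=X\setminus\mathcal{O}=\mathcal{O}'$ is the only other stratum, with $\|\beta_{\max}\|>\|\beta_{\min}\|$.

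Next I would describe $\Crit(f)$ and check the Morse--Bott condition. Because $f$ is $K$-invariant it is constant on the $K$-orbit $\mathcal{O}'$, and by Proposition \ref{lemm} the maximum of $f$ is attained on the unique closed orbit $\mathcal{O}'$; hence every point of $\mathcal{O}'$ is a global maximum of $f$, in particular critical. The minimum of $f$ is attained on $\mathcal{O}$ (it is strictly smaller than the value on $\mathcal{O}'$), so $\Crit(f)\cap\mathcal{O}\neq\emptyset$, and by Theorem \ref{critt} all critical points of the single $G$-orbit $\mathcal{O}$ form one $K$-orbit $\mathcal{O}_0$. Thus $\Crit(f)=S_{\beta_{\max}}\sqcup\mathcal{O}_0$, a disjoint union of two connected ($G$, hence $K$, is connected) compact $K$-orbit submanifolds. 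For nondegeneracy I apply Proposition \ref{Hessian comp} and Remark \ref{Hessian comp1}. At $x\in S_{\beta_{\max}}=\mathcal{O}'=K\cdot x$ one has $T_xS_{\beta_{\max}}=T_x(K\cdot x)$, so Proposition \ref{Hessian comp}(1),(4) give that $H_x(f)$ vanishes on $T_x(K\cdot x)=T_xS_{\beta_{\max}}$ and is negative definite on the normal space $(T_xS_{\beta_{\max}})^\bot$; hence $\ker H_x(f)=T_x(\mathcal{O}')$ and the index there is $k:=\operatorname{codim}_X S_{\beta_{\max}}$. At $x\in\mathcal{O}_0\subset S_{\beta_{\min}}=\mathcal{O}$ the stratum is open, so $T_xS_{\beta_{\min}}=T_xX=T_x(G\cdot x)$, which by Remark \ref{Hessian comp1} splits as $T_x(K\cdot x)\oplus\mathfrak{p}^\beta\cdot x\oplus\mathfrak{r}^{\beta+}\cdot x$; Proposition \ref{Hessian comp}(1),(2) give that $H_x(f)$ vanishes on the first summand and is positive definite on the other two, so $\ker H_x(f)=T_x(K\cdot x)=T_x(\mathcal{O}_0)$ and the index is $0$. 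Hence $f$ is Morse--Bott with precisely the critical submanifolds $S_{\beta_{\max}}$ (index $k$) and $\mathcal{O}_0$ (index $0$); and by the Retraction Theorem \ref{retraction-theorem} the stratum $S_{\beta_{\min}}=\mathcal{O}$ admits a $K$-equivariant strong deformation retraction onto $\mathcal{O}_0=S_{\beta_{\min}}\cap\mu_\mathfrak{p}^{-1}(K\cdot\beta_{\min})$, so in (co)homology $S_{\beta_{\min}}$ may be replaced by this critical $K$-orbit. This proves (a).

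For (b) and (c) I would invoke standard Morse--Bott theory in the form of Kirwan \cite{Kirwan}, adapted to gradient maps of real reductive groups in \cite{heinzner-schwarz-stoetzel}, applied to the two critical submanifolds identified in (a). The Morse--Bott inequalities give $t^k P_{S_{\beta_{\max}}}(t)+P_{S_{\beta_{\min}}}(t)-P_X(t)=(1+t)R(t)$ with $R$ having nonnegative integer coefficients, which is the identity in (b); evaluating at $t=-1$, where $(1+t)R(t)$ vanishes, and using that the Morse index $k$ along $S_{\beta_{\max}}$ is even (a feature of the norm square of a gradient map, \cite{heinzner-schwarz-stoetzel}), yields $\chi(X)=\chi(S_{\beta_{\max}})+\chi(S_{\beta_{\min}})$. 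For (c) one uses Kirwan's equivariant perfection: the equivariant Euler class of the negative normal bundle of each stratum is not a zero divisor in the $K$-equivariant cohomology of the corresponding critical set, so the $K$-equivariant Morse--Bott inequalities are equalities, and with only two critical submanifolds this gives $P^K_X(t)=t^k P^K_{S_{\beta_{\max}}}(t)+P^K_{S_{\beta_{\min}}}(t)$. (Equivalently, (c) follows from the long exact sequence of the pair $(X,S_{\beta_{\min}})$ together with the equivariant Thom isomorphism $H^*_K(X,S_{\beta_{\min}})\cong H^{*-k}_K(S_{\beta_{\max}})$, once one knows the connecting homomorphism vanishes.)

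The main obstacle is part (a), and within it the identification of the two strata with the two orbits: this requires the openness of the minimal stratum, the connectedness of $X$, and the Ness uniqueness Theorem \ref{critt} to rule out the degenerate ``single stratum'' case, after which the Morse--Bott nondegeneracy and the two indices are read off from the Hessian description of \cite{heinzner-schwarz-stoetzel}. Once (a) is established, (b) and (c) are routine, the only imported non-trivialities being the evenness of the Morse indices (needed for the Euler characteristic formula) and Kirwan's equivariant perfection theorem.
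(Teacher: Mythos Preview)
Your proposal is correct and follows essentially the same route as the paper: use the stratification theorem together with Proposition~\ref{lemm} and the Ness Uniqueness Theorem~\ref{critt} to see that the two strata are the closed $G$-orbit and the open $G$-orbit (the latter retracting onto a single $K$-orbit), then read off Morse--Bott nondegeneracy from the Hessian description in Proposition~\ref{Hessian comp} and Remark~\ref{Hessian comp1}, and finally invoke Kirwan's theory for (b) and (c). Your argument is in fact more detailed than the paper's in two places: you carefully rule out the degenerate single-stratum case (the paper just observes $f$ is nonconstant), and you explicitly flag that the Euler-characteristic identity in (b) needs the index $k$ to be even, which the paper does not comment on.
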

\begin{proof}
We first proof (a). Consider the function $f$ and its critical set $C.$ $f$ is non constant on $X;$ in fact if $f$ is constant, then every point of $X$ is a maximum point and in view of proposition \ref{lemm}, all $G$-orbit would be closed.


By Theorem \ref{critt}, we have that $S_{\beta_{min}}$ consist of a single $K$-orbit, and so it is connected.

Since $f$ realizes its maximum value at any critical point $x$ belonging to $S_{\beta_{max}},$ then by Proposition \ref{Hessian comp}d
$$
H_x(f) < 0\quad \text{on} \quad T_x(S_{\beta_{max}})^\bot.
$$
Now we show that the Hessian of $f$ at a critical point $x$ belonging to $S_{\beta_{min}}$ is non degenerate in the normal direction. Set $\mu_\mathfrak{p} (x) = \beta_{min} = \beta.$

Suppose $\beta \neq 0.$ By Remark \ref{Hessian comp1},

$$T_xS_{\beta_{min}} = T_x(G\cdot x) = T_x(K\cdot x) \oplus \mathfrak{p}^\beta \cdot x \oplus \lier^{\beta+}\cdot x,
$$
where $\lier^{\beta+}$ is the Lie algebra of $R^{\beta+}.$ By Proposition \ref{Hessian comp}b,

$$
H_x(f) > 0\quad \text{on} \quad \mathfrak{p}^\beta \cdot x \oplus \lier^{\beta+}\cdot x.
$$ Since $H_x(f)\geq 0,$ it follows that
$$
T_x(G\cdot x) = T_x(K\cdot x) \oplus^\bot (\mathfrak{p}^\beta \cdot x \oplus \lier^{\beta+}\cdot x).
$$

Suppose $\beta = 0$. Let $x = x_{min}.$ By Theorem \ref{critt}, $\mu_\mathfrak{p}^{-1}(0) = K\cdot x.$  
$$
\text{ker}\;\;d\mu_\mathfrak{p}(x) = (\mathfrak{p} \cdot x)^\bot.
$$
By Proposition \ref{Hessian}, $$H_x(f)|_{(\mathfrak{p}\cdot x)} > 0.
$$
$$
T_{x}X = T_{x}(G\cdot x).
$$



Since $K\cdot x_{min} \subset \text{ker}\;\;d\mu_\mathfrak{p}(x_{min}),$
it follows that

$$
T_{x}(G\cdot x) = K\cdot x + \mathfrak{p}\cdot x = T_{x}X
$$
$$
H_x(f)|_{(K\cdot x)} = 0.
$$
$$
T_{x}X = K\cdot x \oplus^\bot \mathfrak{p}\cdot x.
$$ By dimensional reason, $H_x(f)$ is non degenerate.



These show that $H_x(f)$ is non degenerate. Hence $f$ is Morse-Bott. The statements in (b) and (c) follow from the general theory in \cite{Kirwan}.
\end{proof}
Finally, we point out that the Abelian convexity Theorem holds for a two orbit variety. Indeed, $X$ has a unique closed orbit. By  Theorem \ref{convexity property} we derive the following result.
\begin{theorem}
Let $X$ be a two orbits variety. Let $\lia \subset \liep$ be an Abelian subalgebra. Then $\mu_\mathfrak{a}(X)$ is a polytope.
\end{theorem}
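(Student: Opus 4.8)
The plan is to reduce the statement to Theorem~\ref{convexity property}, whose only hypothesis is the existence of a \emph{unique} closed $G$-orbit; so the whole task is to check that a two orbits variety meets this requirement.

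First I would recall that $X$ is compact and connected and that the $G$-action has exactly two orbits. Since $X$ is compact, the action admits at least one closed orbit: an orbit $\mathcal{O}$ of minimal dimension is closed, because $\overline{\mathcal{O}}$ is $G$-invariant, $\mathcal{O}$ is open in $\overline{\mathcal{O}}$, and hence $\overline{\mathcal{O}}\setminus\mathcal{O}$ is a (possibly empty) union of orbits of strictly smaller dimension, which by minimality must be empty. On the other hand, the two orbits cannot both be closed: $X$ is connected, and a connected space cannot be the disjoint union of two non-empty closed subsets. Therefore exactly one of the two orbits is closed, and it is the unique closed $G$-orbit of $X$; this is precisely the orbit $S_{\beta_{max}}$ on which $f$ attains its maximum in the discussion preceding Theorem~\ref{two-orbit-Morse}, the complementary orbit $S_{\beta_{min}}$ being open. (Alternatively, Proposition~\ref{lemm} already exhibits the maximum set of $f$ as a closed $G$-orbit equal to a $K$-orbit, and the remaining orbit, being the complement of a closed set in a connected space, is open and hence not closed.)

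With the unique closed orbit $\mathcal{O}$ in hand, I would then apply Theorem~\ref{convexity property} verbatim: it yields $\mu_\mathfrak{a}(X)=\mu_\mathfrak{a}(\mathcal{O})$, and $\mu_\mathfrak{a}(\mathcal{O})$ is a convex polytope, since for a maximal Abelian subalgebra $\mathfrak{a}'\supset\mathfrak{a}$ one has $\mu_\mathfrak{a}(\mathcal{O})=\pi_\mathfrak{a}(\mu_{\mathfrak{a}'}(\mathcal{O}))$ and Kostant's convexity theorem applies to $\mu_{\mathfrak{a}'}(\mathcal{O})$. This finishes the proof. There is essentially no obstacle beyond the verification of the uniqueness of the closed orbit; once Theorem~\ref{convexity property} is available, the conclusion is immediate.
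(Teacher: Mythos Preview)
Your proposal is correct and follows exactly the paper's approach: the paper simply notes that a two orbits variety has a unique closed orbit and then invokes Theorem~\ref{convexity property}. You supply more detail than the paper does on why the closed orbit is unique, but the argument is the same.
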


\end{document}